\newcounter{arXivFormat} \setcounter{arXivFormat}{0}
\newcommand{\IfarXivElse}[2]{
    \ifthenelse{\value{arXivFormat}=1}
        {#1}{#2}
    }
\newcounter{bIncludeConstantinRamosSection} \setcounter{bIncludeConstantinRamosSection}{0}
\newcounter{dtlForSubmission} \setcounter{dtlForSubmission}{0}
\newcounter{dtlMarginComments} \setcounter{dtlMarginComments}{1}
\newcounter{dtlSomeDetail} \setcounter{dtlSomeDetail}{2}
\newcounter{dtlFullDetails} \setcounter{dtlFullDetails}{3}
\newcounter{DetailLevel} \setcounter{DetailLevel}{\value{dtlForSubmission}}
\newcommand{\DetailForSubmission}[1]{
    \ifthenelse{\value{DetailLevel}=\value{dtlForSubmission}}
        {{\small #1}}{}
    }
\newcommand{\DetailMarginNote}[1]{
    \ifthenelse{\value{DetailLevel}=\value{dtlMarginComments} \or \value{DetailLevel}>\value{dtlMarginComments}}
        {{\small #1}}{}
    }
\newcommand{\DetailSome}[1]{
    \ifthenelse{\value{DetailLevel}=\value{dtlSomeDetail} \or \value{DetailLevel}>\value{dtlSomeDetail}}
        {{\small \textbf{Detailed compile only}: #1}}{}
    }
\newcommand{\DetailFull}[1]{
    \ifthenelse{\value{DetailLevel}=\value{dtlFullDetails} \or \value{DetailLevel}>\value{dtlFullDetails}}
        {{\small \textbf{Detailed compile only}: #1}}{}
    }
\newcommand{\NotDetailSome}[1]{
    \ifthenelse{\value{DetailLevel}=\value{dtlSomeDetail} \or \value{DetailLevel}>\value{dtlSomeDetail}}
        {}{#1}
    }
\newcommand{\NotDetailFull}[1]{
    \ifthenelse{\value{DetailLevel}=\value{dtlFullDetails} \or \value{DetailLevel}>\value{dtlFullDetails}}
        {}{#1}
    }
\newcommand{\DetailSomeElse}[2]{
    \ifthenelse{\value{DetailLevel}=\value{dtlSomeDetail} \or \value{DetailLevel}>\value{dtlSomeDetail}}
        {{\small \textbf{Detailed compile only}: #1}}{#2}
    }
\newcommand{\DetailFullElse}[2]{
    \ifthenelse{\value{DetailLevel}=\value{dtlFullDetails} \or \value{DetailLevel}>\value{dtlFullDetails}}
        {{\small \textbf{Detailed compile only}: #1}}{#2}
    }
\newcommand{\DetailSomeInline}[1]{
    \ifthenelse{\value{DetailLevel}=\value{dtlSomeDetail} \or \value{DetailLevel}>\value{dtlSomeDetail}}
        {{\small #1}}{}
    }
\newcommand{\DetailFullInline}[1]{
    \ifthenelse{\value{DetailLevel}=\value{dtlFullDetails} \or \value{DetailLevel}>\value{dtlFullDetails}}
        {{\small #1}}{}
    }
\newcommand{\DetailSomeElseInline}[2]{
    \ifthenelse{\value{DetailLevel}=\value{dtlSomeDetail} \or \value{DetailLevel}>\value{dtlSomeDetail}}
        {{\small #1}}{#2}
    }
\newcommand{\DetailFullElseInline}[2]{
    \ifthenelse{\value{DetailLevel}=\value{dtlFullD	etails} \or \value{DetailLevel}>\value{dtlFullDetails}}
        {{\small #1}}{#2}
    }
\newcommand{\ExplainDetailLevel}{
    Detail level is
    \ifthenelse{\value{DetailLevel}=\value{dtlForSubmission}}
        {0: for submission}
        {\ifthenelse{\value{DetailLevel}=\value{dtlMarginComments}}
            {1: as for submission but with margin comments}
            {\ifthenelse{\value{DetailLevel}=\value{dtlSomeDetail}}
                {2: some proofs not intended for submission}
               {\ifthenelse{\value{DetailLevel}=\value{dtlFullDetails}}
                   {3: full details}
                   {invalid}
                }
            }
        }
    }
\newtheorem{theorem}{Theorem}[section]
\newtheorem{lemma}[theorem]{Lemma}
\newtheorem{cor}[theorem]{Corollary}
\theoremstyle{definition}
\newtheorem{definition}[theorem]{Definition}
\newtheorem{remark}[theorem]{Remark}
\numberwithin{equation}{section}
\newcommand{\abs}[1]{\left\vert#1\right\vert}
\newcommand{\innp}[1]{\ensuremath{\left< #1 \right>}}
\newcommand{\skipline}{\vspace{11pt}}
\newcommand{\BB}[1]{\ensuremath{\mathbb{#1}}}
\newcommand{\R}{\ensuremath{\BB{R}}} %
\newcommand{\Test}{\ensuremath{\Cal{T}}} %
\newcommand{\iny}{\ensuremath{\infty}}
\newcommand{\grad}{\ensuremath{\nabla}}
\DeclareMathOperator{\dv}{div} %
\DeclareMathOperator{\supp}{supp} %
\DeclareMathOperator{\Lim}{Lim} %
\newcommand{\prt}{\ensuremath{\partial}}
\newcommand{\brac}[1]{\ensuremath{\left[ #1 \right]}}
\newcommand{\pr}[1]{\ensuremath{\left( #1 \right) }}
\newcommand{\set}[1]{\ensuremath{\left\{ #1 \right\}}}
\newcommand{\norm}[1]{\ensuremath{\left\Vert #1 \right\Vert}}
\newcommand{\smallnorm}[1]{\ensuremath{\Vert #1 \Vert}}
\newcommand{\refS}[1]{Section~\ref{S:#1}}
\newcommand{\refT}[1]{Theorem~\ref{T:#1}}
\newcommand{\refL}[1]{Lemma~\ref{L:#1}}
\newcommand{\refD}[1]{Definition~\ref{D:#1}}
\newcommand{\refC}[1]{Corollary~\ref{C:#1}}
\newcommand{\refE}[1]{Equation~(\ref{e:#1})}
\newcommand{\refEAnd}[2]{Equations~(\ref{e:#1}) and (\ref{e:#2})}
\newcommand{\refEThrough}[2]{Equations~(\ref{e:#1}) through (\ref{e:#2})}
\newcommand{\refR}[1]{Remark~\ref{R:#1}}
\newcommand{\refRAnd}[2]{Remarks~\ref{R:#1} and \ref{R:#2}}
\newcommand{\Cal}[1]{\ensuremath{\mathcal{#1}}}
\newcommand{\la}{\ensuremath{\lambda}}
\newcommand{\diff}[2]{\frac{ d#1}{d#2}}
\newcommand{\ol}{\overline}
\newcommand{\smallabs}[1]{\ensuremath{\vert #1 \vert}}
\begin{document}

\raggedbottom

\numberwithin{equation}{section}

%
%
\newcommand{\MarginNote}[1]{
    \ifthenelse{\value{DetailLevel}=\value{dtlMarginComments} \or
            \value{DetailLevel}>\value{dtlMarginComments}} {
        \marginpar{
            \begin{flushleft}
                \footnotesize #1
            \end{flushleft}
            }
        }
        {}
    }

%
%
\newcommand{\NoteToSelf}[1]{
    }

%
%
\newcommand{\Obsolete}[1]{
    }

%
%
\newcommand{\Tentative}[1]{
    }

\newcommand{\Detail}[1]{
    \MarginNote{Detail}
    \skipline
    \hspace{+0.25in}\fbox{\parbox{4.25in}{\small #1}}
    \skipline
    }

\newcommand{\Todo}[1]{
    \skipline \noindent \textbf{TODO}:
    #1
    \skipline
    }

\newcommand{\Comment}[1] {
    \skipline
    \hspace{+0.25in}\fbox{\parbox{4.25in}{\small \textbf{Comment}: #1}}
    \skipline
    }

\newcommand{\Ignore}[1]{}

%
%

\newcommand{\IntTR}
    {\int_{t_0}^{t_1} \int_{\R^d}}

\newcommand{\IntAll}
    {\int_{-\iny}^\iny}

\newcommand{\Schwartz}
    {\ensuremath \Cal{S}}

\newcommand{\SchwartzR}
    {\ensuremath \Schwartz (\R)}

\newcommand{\SchwartzRd}
    {\ensuremath \Schwartz (\R^d)}

\newcommand{\SchwartzDual}
    {\ensuremath \Cal{S}'}

\newcommand{\SchwartzRDual}
    {\ensuremath \Schwartz' (\R)}

\newcommand{\SchwartzRdDual}
    {\ensuremath \Schwartz' (\R^d)}

\newcommand{\HSNorm}[1]
    {\norm{#1}_{H^s(\R^2)}}

\newcommand{\HSNormA}[2]
    {\norm{#1}_{H^{#2}(\R^2)}}

\newcommand{\Holder}
    {H\"{o}lder }

\newcommand{\Holders}
    {H\"{o}lder's }

\newcommand{\Holderian}
    {H\"{o}lderian }

\newcommand{\Frechet}
	{Fr\'{e}chet }

\newcommand{\HolderRNorm}[1]
    {\widetilde{\Vert}{#1}\Vert_r}

\newcommand{\LInfNorm}[1]
    {\norm{#1}_{L^\iny(\Omega)}}

\newcommand{\SmallLInfNorm}[1]
    {\smallnorm{#1}_{L^\iny}}

\newcommand{\LOneNorm}[1]
    {\norm{#1}_{L^1}}

\newcommand{\SmallLOneNorm}[1]
    {\smallnorm{#1}_{L^1}}

\newcommand{\LTwoNorm}[1]
    {\norm{#1}_{L^2(\Omega)}}

\newcommand{\SmallLTwoNorm}[1]
    {\smallnorm{#1}_{L^2}}

\newcommand{\LpNorm}[2]
    {\norm{#1}_{L^{#2}}}

\newcommand{\SmallLpNorm}[2]
    {\smallnorm{#1}_{L^{#2}}}

\newcommand{\lOneNorm}[1]
    {\norm{#1}_{l^1}}

\newcommand{\lTwoNorm}[1]
    {\norm{#1}_{l^2}}

\newcommand{\MsrNorm}[1]
    {\norm{#1}_{\Cal{M}}}

\newcommand{\FTF}
    {\Cal{F}}

\newcommand{\FTR}
    {\Cal{F}^{-1}}

\newcommand{\InvLaplacian}
    {\ensuremath{\widetilde{\Delta}^{-1}}}

\newcommand{\EqDef}
    {\hspace{0.2em}={\hspace{-1.2em}\raisebox{1.2ex}{\scriptsize def}}\hspace{0.2em}}
    
\newcommand{\TR}{\mathbf{P}_{V_R}}
\newcommand{\UR}{\mathbf{U}_R}
\newcommand{\PHR}{\mathbf{P}_{H_R}}
\newcommand{\YR}{\ol{\mathbf{U}}_R}
\newcommand{\TRinv}{\mathbf{P}_{V_R}^{-1}}

\newcommand{\X}{\ensuremath{\BB{E}}} 
\newcommand{\Y}{\ensuremath{\BB{Y}}} 
\newcommand{\Z}{\ensuremath{{\BB{E}^1}}}

%
%

%
%

\title
    [Infinite-energy 2D statistical solutions in fluid mechanics]
    {Infinite-energy 2D statistical solutions to the equations of incompressible fluids}

\author{James P. Kelliher}
\address{Department of Mathematics, University of California, Riverside, 900 University Ave.,
Riverside, CA 92521}
\curraddr{Department of Mathematics, University of California, Riverside, 900 University Ave.,
Riverside, CA 92521}
\email{kelliher@math.ucr.edu}

\subjclass[2000]{76D06, 76D05} 

\date{} 


\keywords{}

\begin{abstract}

We develop the concept of an infinite-energy statistical solution to the Navier-Stokes and Euler equations in the whole plane. We use a velocity formulation with enough generality to encompass initial velocities having bounded vorticity, which includes the important special case of vortex patch initial data. Our approach is to use well-studied properties of statistical solutions in a ball of radius $R$ to construct, in the limit as $R$ goes to infinity, an infinite-energy solution to the Navier-Stokes equations. We then construct an infinite-energy statistical solution to the Euler equations by making a vanishing viscosity argument.

\end{abstract}

\maketitle

\IfarXivElse{}{
		  \DetailForSubmission{
                 Compiled on \textit{\textbf{\today}}
                 }
             }

\DetailMarginNote{
    \begin{small}
        \begin{flushright}
            Compiled on \textit{\textbf{\today}}

            \ExplainDetailLevel

        \end{flushright}
    \end{small}
    }

\tableofcontents

%
%
\section{Introduction}\label{S:Introduction}

\Ignore{ 
\noindent  A 2D statistical solution to the Navier-Stokes (SSNS) or Euler equations (SSE) is a Borel probability measure on  a function space representing velocity or some other parameter such as vorticity (scalar curl of the velocity), with the measure evolving over time in accordance with the respective set of fluid equations. One typically works with a weak form of the solution that is applicable to higher dimensions as well.
} 

We develop the concept of a statistical solution to the Navier-Stokes (SSNS) or Euler equations (SSE) in the plane for an important class of velocity fields having sufficient decay of the vorticity at infinity to recover uniquely the velocity field from the vorticity. In particular, this class of velocity fields includes the important case of a vortex patch: a velocity field whose initial vorticity is the characteristic function of a bounded domain.

Our starting point is the velocity formulation of a SSNS on a bounded domain given by Foias in \cite{Foias1972}. (A highly accessible account of the theory of SSNSs is given in \cite{FMRT}, to which we refer often.)
We adapt this formulation slightly, of necessity changing the energy equality and using the same class of test functions as for homogeneous solutions in the plane (\refS{SSNS}). Our definition of an infinite-energy SSE is the same with the viscosity set to zero. We construct our infinite-energy SSNS by showing that it is the limit, in a special sense, of a sequence of statistical solutions on balls of radius $R$ as $R \to \iny$. At the core of our approach is the expanding domain limit for deterministic solutions to the Navier-Stokes equations established in \cite{K2005ExpandingDomain}, extended to handle infinite-energy solutions.
We then construct our infinite-energy SSE by making a vanishing viscosity argument.

\Ignore{ 
\noindent The equations of incompressible fluid mechanics in the whole plane, including the Navier-Stokes and Euler equations, can be made sense of for velocity fields that have infinite energy. The manner in which this is done depends on how ``infinite'' the energy is. It is possible to interpret solutions even in the situation where the velocity and its associated vorticity (scalar curl) are only in $L^\iny$ (\cite{GIM1999}, \cite{GMS2001}, \cite{Serfati1995A}, \cite{C2008}). We restrict our attention here to an important class of velocity fields for which there is sufficient decay of the vorticity at infinity to recover uniquely the velocity field from the vorticity. (We make this more precise below.) In particular, this class of velocity fields includes the important case of a vortex patch: a velocity field whose initial vorticity is the characteristic function of a bounded domain.

There are two basic approaches in the literature to the analysis of deterministic solutions to the Navier-Stokes or Euler equations for the infinite energy velocity fields that we consider, one based on the velocity formulation of the equations the other based on the vorticity formulation.

In the velocity formulation, the Navier-Stokes or Euler equations are written solely in terms of the velocity.
One considers an initial velocity lying in the space $E_m$
of \cite{C1996} and \cite{C1998}.  A vector $v$ belongs to $E_m$ if it is divergence-free
and can be written in the form $v = \sigma + v'$, where $v'$ is in
$L^2(\R^2)$ and where $\sigma$ is a smooth stationary solution to the Euler equations whose vorticity is radially symmetric and compactly supported. (See \refS{FS} for more details.) A unique global solution to the Navier-Stokes equations exists and remains in the space $E_m$ for all time as long as the forcing has finite energy. The same can be said of solutions to the Euler equations if one imposes restrictions on the initial vorticity; for instance, that it lie in $L^1 \cap L^\iny$.

In the vorticity formulation, the equations are written in terms of the vorticity, with the velocity recovered from the vorticity using the Biot-Savart law (\refL{BSLaw}). To apply the Biot-Savart law, however, one must have sufficient regularity and decay of the vorticity, so even for solutions to the Navier-Stokes equations, we must have higher regularity of the initial velocity. We need such regularity to have well-posedness even in the velocity formulation of the Euler equations, so this is not a limitation there. When the required properties of the velocity are not so critical, this is a very effective procedure, but the velocity will not, in general, lie in $E_m$ for any $m$. An advantage of this approach is that one never directly has to face the lack of finite energy. A difficulty with the approach is that controlling the velocity can be complicated.

\Ignore{ 
In relation to weak solutions to the Navier-Stokes or Euler equations with initial velocity in functions classes that are near to (or beyond) the limit of what is known to lead to unique solutions to the Euler equations---the situation that concerns us here---there are many examples of the velocity formulation approach: \cite{C1996} and \cite{C1998} mentioned above are prototypical. The purely vorticity formulation approach in this context is adopted \cite{VIncompressible} in proving uniqueness of solutions to the Euler Equations and in \cite{BenArtzi1994} ......
} 

Analogous infinite-energy statistical solutions to the Navier-Stokes and Euler equations seem not to have been directly considered in the literature. In this paper, we show that, in fact, a satisfactory infinite-energy theory for the velocity formulation can be established by considering the limit, in a special sense, of statistical solutions to these equations in a ball of radius $R$ as $R$ increases to infinity.
} 

\Ignore{ 
The Navier-Stokes equations describe the behavior of a classical incompressible fluid with constant positive viscosity; the zero-viscosity (inviscid) case, leads to the Euler equations. We say that the vanishing viscosity limit holds if the solutions to the Navier-Stokes equations converge in an appropriate sense over a finite time interval to a solution to the Euler equations in the limit as the viscosity goes to zero. For deterministic solutions to the equations of incompressible fluids, the vanishing viscosity limit has been established in a variety of settings, including:
\begin{enumerate}
\item
All of $\R^d$, $d \ge 2$ for smooth solutions up to the time for which the solution to the Euler equations remains smooth.

\item

All of $\R^2$, when the initial vorticity is in $L^{p_0} \cap L^\iny$ for some $p_0 \le 2$ and where the energy is allowed to be infinite (the velocity lying in the space $E_m$, described below). (\cite{C1996}, with extension to unbounded vorticities in \cite{K2003}.)

\item

A bounded domain with Navier friction boundary conditions for smooth solutions in $\R^3$ (\cite{IP2006}) or in $\R^2$ for initial velocities as in (2) above (\cite{CMR}, \cite{FLP}, \cite{KNavier}).

\item

Special geometries or assumptions on the initial velocity that lead to improved bounds on the rates of convergence.
\end{enumerate}

Perhaps the most important example of the last type is the case of a 2D vortex patch, considered in \cite{CW1995}, \cite{CW1996}, \cite{AD2004}.  A 2D vortex patch is a velocity field $u$ whose vorticity (scalar curl) $\omega$ is bounded and has compact support. In the classical case one considers the characteristic function of a bounded domain with some degree of smoothness to its boundary. This is a special case of that considered in (2), but where the topic of interest becomes upper and lower bounds on the rate of convergence, as well as the strength of the convergence (showing convergence of the vorticity, for instance, and not just the velocity).
 
A 2D statistical solution to the Navier-Stokes or Euler equations, in the velocity formulation, is a Borel probability measure on  a function space representing velocity, with the measure evolving over time in accordance with the respective set of fluid equations. Vanishing viscosity arguments in the deterministic setting extend, in a fairly straightforward way, to statistical solutions to the Navier-Stokes and Euler equations, as long as a suitable existence theory for such solutions in the given setting can be established. (A specific example that illustrates this fact is given in the proof of \refT{FMRTForEInf}.) Rates of convergence, however, are not as naturally defined for statistical solutions. It is our main purpose here to establish an existence (and in the case of the Navier-Stokes equations, uniqueness) theory appropriate for application to vortex patch initial data, and more generally, to bounded initial vorticity. The main consideration is allowing the velocities to have infinite energy.
} 

In the deterministic setting, the infinite-energy solutions that we consider correspond to an initial velocity lying in the space $E_m$ of \cite{C1996} and \cite{C1998}.  A vector $v$ belongs to $E_m$ if it is divergence-free
and can be written in the form $v = \sigma + v'$, where $v'$ is in
$L^2(\R^2)$ and where $\sigma$ is a smooth stationary solution to the Euler equations whose vorticity is radially symmetric and compactly supported. (See \refS{FS} for more details.) A unique global solution to the Navier-Stokes equations exists and remains in the space $E_m$ for all time as long as the forcing has finite energy. The same can be said of solutions to the Euler equations if one imposes restrictions on the initial vorticity; for instance, that it lie in $L^1 \cap L^\iny$. This encompasses the case of a classical vortex patch---initial vorticity that equals the characteristic function of a bounded domain. (See also \refC{CorBSLaw}.)

A key parameter of any vortex patch is the total mass of its vorticity,
\begin{align*}
	m = \int_{\R^2} \omega.
\end{align*}
Only when $m = 0$ will the velocity field have finite energy (lie in $L^2$), which excludes the case of a classical vortex patch. In a sense, $m$ measures how infinite the energy is.

\Ignore{ 
In the deterministic setting, one handles this situation by extending slightly the theory of finite-energy solutions to the Navier-Stokes and Euler equations to allow infinite energy solutions. It turns out, that given initial vorticity with total mass $m$, the vorticity for the solutions to these equations will always have total mass $m$, as long as the forcing has finite energy. This greatly facilitates working with such solutions, since they lie for all time in the space $E_m$, which has one, definite ``infiniteness'' of energy.
} 

When working with statistical solutions one would like to allow $m$ to take on different values, because classical vortex patches that are nearly identical will typically have different values of $m$. Thus, we need to consider the spaces $E_m$ for all values of $m$ simultaneously.

\Ignore{ 

We will extend (in 2D only) the theory of statistical solutions developed by Foias in \cite{Foias1972} to apply to infinite-energy solutions in the whole plane. Our approach is to use  the existence theory for statistical solutions to Navier-Stokes equations for a bounded domain in the plane from \cite{Foias1972}, and let the size of the domain increase to infinity, showing that in the limit, in a special sense, one obtains a statistical solution to the Navier-Stokes equations in the whole plane that allows for infinite energy. At the core of our approach is the expanding domain limit for deterministic solutions to the Navier-Stokes equations established in \cite{K2005ExpandingDomain}, extended to handle infinite-energy solutions.

We construct an infinite-energy statistical solution to the Euler equations by showing that statistical solutions to the Navier-Stokes equations converge to such a solution over any finite time interval as the viscosity vanishes. (Another type of vanishing viscosity limit looks at what happens to long term time-averaged solutions to the Navier-Stokes equations as the viscosity vanishes. This kind of limit has more meaning in the theory of turbulence. For instance, in \cite{CR2007}, Constantin and Ramos  establish a long time-averaged vanishing viscosity limit for damped and driven stationary statistical solutions to the Navier-Stokes equations to their inviscid form.)
} 

We give a velocity rather than a vorticity formulation of our statistical solutions for several reasons. First, a vorticity formulation would require imposing higher regularity on the initial vorticity than required for solutions to the Navier-Stokes equations: it would be technically quite difficult to assume anything weaker than the initial vorticity lying in $L^1$, as in \cite{BenArtzi1994}. Second, it would be hard to obtain convergence of the vorticity in the vanishing viscosity limit, even with higher regularity of the initial data, without knowing that the velocity decays at infinity, and this does not come from the Biot-Savart law. Third, a start in this direction has already been made in \cite{CR2007} for time-independent solutions to damped and driven Navier-Stokes and Euler equations in the vorticity formulation. 

Constantin and Ramos do not specifically address infinite-energy solutions in \cite{CR2007}; however, their definition of a such a solution requires no change at all to encompass infinite-energy solutions and neither does their proof of the vanishing viscosity limit.
\ifthenelse{\value{bIncludeConstantinRamosSection}=0}
{}
{(See \refR{InfiniteFiniteEnergyCR}.)}
Their construction of a stationary statistical solution to the Navier-Stokes equations as a long-time average of a deterministic solution to the damped and driven Navier-Stokes equations does assume finite energy. Nothing deep need be done, however, to extend their construction to allow infinite energy solutions (and so allow initial vortex patch data): one need only assume infinite-energy forcing. 
\ifthenelse{\value{bIncludeConstantinRamosSection}=0}
{}
{We outline the approach briefly in \refS{DD}.}

This paper is organized as follows: In \refS{FS}, we define the function spaces in which we will work. We characterize the projection operator we will use to construct the initial velocities in $\Omega_R$ in \refS{Projection}. In \refS{WeakSolutions}, we define weak deterministic solutions to the Navier-Stokes and Euler equations and give the basic well-posedness and regularity results for such solutions. The deterministic expanding domain limit of \cite{K2005ExpandingDomain} is established for infinite-energy solutions in \refS{Expanding}. We give the definition of a statistical solution to the Navier-Stokes equations in velocity form, for finite as well as infinite energy, in \refS{SSNS}, and construct an infinite-energy statistical solution to the Navier-Stokes equations in \refS{ConstructNS}, showing that it is unique. In \refS{ConstructE} we construct an infinite-energy statistical solution to the Euler equations using a vanishing viscosity argument.
\ifthenelse{\value{bIncludeConstantinRamosSection}=0}
{}
{Finally, in \refS{DD}, we discuss the damped and driven solutions to the Navier-Stokes equations of \cite{CR2007} with infinite energy.}

%
%
\section{Function spaces and the Biot-Savart Law}\label{S:FS}

\noindent Let
\begin{align*}
	\Omega_R = \text{the disk of radius } R \text{ centered at the origin},
\end{align*}
with $\Omega_\iny = \R^2$, and define the classical function spaces of incompressible fluid mechanics,
\begin{align*}
	H_R &= H(\Omega_R)
		= \set{u \in (L^2(\Omega_R))^2 \colon \dv u = 0, \, u \cdot \mathbf{n} = 0 \text{ on } \prt \Omega_R}, \\
	V_R &= V(\Omega_R)
		= \set{u \in (H^1(\Omega_R))^2 \colon \dv u = 0, \, u = 0 \text{ on } \prt \Omega_R}, \\
	H &= H_\iny = H(\R^2), \quad
	V = V_\iny = V(\R^2).
\end{align*}

We endow $H_R$ with the $L^2$-norm. For $V_R$, we use the $H^1$-norm:
\begin{align}\label{e:VNorm}
	\norm{u}_{V_R} = \norm{u}_{L^2(\Omega_R)} + \norm{\grad u}_{L^2(\Omega_R)}.
\end{align}
Note, in particular, that
\begin{align}\label{e:H1Bounds}
	\norm{u}_{L^2(\Omega_R)} \le \norm{u}_{V_R}, \qquad
	\norm{\grad u}_{L^2(\Omega_R)} \le \norm{u}_{V_R}.
\end{align}
Had we used the Poincare inequality to replace \refE{VNorm} with the equivalent norm that includes only the second term, as is normally done for a bounded domain, it would have introduced a factor of $R$ in the right-hand side of the first inequality, preventing us from having a consistent norm with which to compare solutions on $\Omega_R$ for different values of $R$.

Our deterministic infinite energy solutions will lie in the space $E_m$ of \cite{C1998}.  A vector $v$ belongs to $E_m$ if it is divergence-free
and can be written in the form $v = \sigma + v'$, where $v'$ is in
$L^2(\R^2)$ and where $\sigma$ is a \textit{stationary vector field},
meaning that $\sigma$ is of the form,
\begin{align}\label{e:Stationary}
    \sigma = \pr{-\frac{x^2}{r^2} \int_0^r \rho g(\rho) \, d \rho,
                \;\frac{x^1}{r^2} \int_0^r \rho g(\rho) \, d \rho}
\end{align}
with $g$ in $C_0^\iny(\R)$. The subscript $m$ is the integral over
all space of the vorticity, 
\begin{align*}
	\omega(v) =  \prt_1 v^2 - \prt_2 v^1.
\end{align*}

$E_m$ is an affine space; fixing an origin, $\sigma$, in $E_m$ we can
define a ``norm'' by $\norm{\sigma + v'}_{E_m} = \LTwoNorm{v'}$.
Convergence in $E_m$ is equivalent to convergence in the $L^2$--norm to
a vector in $E_m$.

We will find it convenient to fix a choice of origin for $E_m$ as follows. For $E_1$ we choose $\sigma_1$ of the form \refE{Stationary} with $\omega(\sigma_1)$ supported in the unit disk and with
\begin{align*}
	\int_{\Omega_1} \omega(\sigma_1)
		= \int_{\R^2} \omega(\sigma_1)
		= 1.
\end{align*}
We can then use $\sigma_m = m \sigma_1$ as an origin for $E_m$.

\Ignore{ 
A simple integration shows that
\begin{align}\label{e:sigma1Norm}
	\norm{\sigma_1}_{L^2(\Omega_R)}^2
		= C_1 +  \frac{1}{2 \pi} \log R,
\end{align}
for all $R \ge 1$.
}

Let $\psi_{\sigma_1}$ be a given fixed stream function for $\sigma_1$. As in in \cite{KLL2007}, $\psi_{\sigma_1}$ is radially symmetric with
\begin{align}\label{e:psiForrLarge}
	\psi_{\sigma_1}(x) = C_2 +  \frac{1}{2 \pi} \log \abs{x}
\end{align}
for all $\abs{x} \ge 1$. For $\abs{x} \ge 1$, $\abs{\sigma_1(x)} = 1/\abs{x}$ by \refE{Stationary}, and
\begin{align}\label{e:sigma1RR1}
	\begin{split}
	&\norm{\sigma_1}_{H^1(\Omega_R \setminus \Omega_{R - 1})}^2
		= 2 \pi \int_{R-1}^R \frac{1}{r^2} r \, dr + 2 \pi \int_{R-1}^R \frac{1}{r^4} r \, dr \\
		&\qquad
			= 2 \pi \log(R/(R - 1)) + \pi\brac{(R - 1)^{-2} - R^{-2}}
			\to 0 \text{ as } R \to \iny.
	\end{split}
\end{align}
\refE{psiForrLarge} also gives $\Delta \sigma_1 = \Delta \grad^\perp \psi_{\sigma_1} =  \grad^\perp \Delta \psi_{\sigma_1} = 0$ on $\Omega_1^C$.

The spaces $H_R$ and $E_m$, or $V_R$ and $E_m \cap \dot{H}^1(\R^2)$, are the appropriate ones for initial velocities for weak deterministic solutions to the Navier-Stokes equtions, but for the Euler equations more regularity is required to obtain well-posedness. Rather than being as general as possible, we will assume that for deterministic solutions the initial vorticity lies in $L^\iny$ for solutions on $\Omega_R$ and in $L^{p_0} \cap L^\iny$, for some $p_0 < 2$ for solutions on $\R^2$. Slightly unbounded vorticities could be handled, as in \cite{K2005ExpandingDomain}, with little complication. This gives not only existence but uniqueness of the solutions. (The uniqueness of solutions for bounded initial vorticity is due to Yudovich \cite{Y1963}, as is the uniqueness for unbounded vorticities \cite{Y1995}.)

Thus, we fix $p_0 < 2$, and define the spaces
\begin{align*}
	\Y_m = \set{u \in E_m \colon \omega(u) \in L^{p_0} \cap L^\iny},
\end{align*}
with ``norm''
\begin{align*}
	\norm{u}_{\Y_m}
		= \norm{u}_{E_m} + \norm{\omega(u - \sigma_m)}_{L^{p_0} \cap L^\iny}
\end{align*}
and
\begin{align*}
	\Y(\Omega_R)
		= \set{u \in H(\Omega_R) \cap H^1(\Omega_R) \colon \omega(u) \in L^\iny}
\end{align*}
with norm
\begin{align*}
	\norm{u}_{\Y(\Omega_R)}
		= \norm{u}_{H^1(\Omega_R)} + \norm{\omega(u)}_{L^{p_0} \cap L^\iny(\Omega_R)}, \quad R < \iny.
\end{align*}
Because $L^{p_0}(\Omega_R) \subseteq L^\iny(\Omega_R)$, using only the $L^\iny$-norm of $\omega(u)$ in the $Y(\Omega_R)$-norm would give a simpler, equivalent norm. We avoid doing this, however, for the same reason we avoided the use of Poincare's inequality in defining the $V_R$-norm in \refE{VNorm}.

For statistical solutions in the whole plane, we do not want to assume that the value of $m$ is fixed, so we must deal with larger spaces. For statistical solutions to the Navier-Stokes equations we will use
\begin{align*}
	\X = \bigcup_{m \in \R} E_m
		\text{ and }
		\Z = \bigcup_{m \in \R} E_m \cap \dot{H}^1(\R^2)
\end{align*}
and for statistical solutions to the Euler equations we will use
\begin{align}\label{e:Y}
	\Y = \bigcup_{m \in R} \Y_m.
\end{align}
 $\X$, $\Y$, and $\Z$ are function spaces, being closed under addition, with the norms
 \begin{align*}
 	\norm{\sigma_m + v}_\X &= \abs{m} + \norm{v}_{L_2}, \quad
	\norm{u}_\Y = \norm{u}_\X +  \norm{\omega(u)}_{L^{p_0} \cap L^\iny}, \\
	\norm{\sigma_m + v}_\Z &= \norm{\sigma_m + v}_\X + \norm{\grad v}_{L^2(\R^2)}.
\end{align*}
These norms induce metrics on their respective spaces. Because $H$ and $V$ are separable, so too are $\X$ and $\Z$. The space $\Y$, however, is not separable, because $L^\iny(\R^2)$ is not.

There exists a unique decomposition of any $u$ in $\X$, $\Y$, or $\Z$ of the form $u = \sigma_m + v$, $m \in \R$, $v \in H$. Given such a $u$, we define
\begin{align}\label{e:L2Part}
	m(u) = m, \quad \sigma(u) = \sigma_{m(u)}.
\end{align}

\begin{definition}\label{D:BoundedSupport}
We say that the support of a measure $\mu$ on the function space $X$ is \textit{bounded in $X$} if
\begin{align*} 
	\supp \mu \subseteq \set{u \in X \colon \norm{u}_X \le M}
		\text{ for some } M < \iny.
\end{align*}
If $Y$ is a subspace of $X$, we say that the support of a measure $\mu$ is \textit{$(X, Y)$-bounded} if
\begin{align*} 
	\supp \mu \subseteq \set{u \in Y \colon \norm{u}_X \le M}
		\text{ for some } M < \iny.
\end{align*}
That is, the support of $\mu$ lies in the subspace, but only its norm in the full space is controlled.
\end{definition}

\begin{lemma}\label{L:BSLaw}[Biot-Savart law]
    Let $p$ be in $[1, 2)$ and let $q > 2p/(2 - p)$. For any vorticity $\omega$ in
    $L^p(\R^2)$ there exists a unique divergence-free vector field $u$ in
    $L^p(\R^2) + L^q(\R^2)$ whose curl is $\omega$, with $u$ being given by the
    Biot-Savart law,
    \begin{align}\label{e:BSLaw}
        u = K * \omega.
    \end{align}
    Here, $K$ is the Biot-Savart kernel, $K(x) = (1/2 \pi) x^\perp/\abs{x}^2$.
\end{lemma}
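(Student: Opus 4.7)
The plan is to prove existence by applying Young's convolution inequality to a near-field/far-field splitting of the Biot-Savart kernel $K$, and uniqueness by a Liouville-type argument for divergence-free, curl-free tempered distributions.

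For existence, write $K = K_1 + K_2$ with $K_1 := K \mathbf{1}_{\Omega_1}$ and $K_2 := K - K_1$. Since $\abs{K(x)} = (2\pi\abs{x})^{-1}$, a polar-coordinate computation gives $K_1 \in L^a(\R^2)$ for every $a \in [1,2)$ and $K_2 \in L^b(\R^2)$ for every $b \in (2, \iny]$. As $K_1 \in L^1$, Young's inequality yields $K_1 * \omega \in L^p(\R^2)$. For the far-field piece, the hypothesis $q > 2p/(2-p)$ is precisely the condition under which one can select $b \in (2, p/(p-1)]$ satisfying $1/b + 1/p - 1 = 1/q$ (interpreting $p/(p-1) = \iny$ when $p = 1$); Young's inequality then gives $K_2 * \omega \in L^q(\R^2)$. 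Therefore $u := K * \omega \in L^p + L^q$. The distributional identities $\dv K = 0$ and $\curl K = \delta_0$ on $\R^2$, which follow from $K = \grad^\perp\bigl[-(2\pi)^{-1} \log\abs{x}\bigr]$ and the fact that $-(2\pi)^{-1} \log\abs{x}$ is the fundamental solution of $-\Delta$, then yield $\dv u = 0$ and $\curl u = \omega$.

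For uniqueness, suppose $u_1, u_2 \in L^p + L^q$ are both divergence-free with curl equal to $\omega$. Their difference $w := u_1 - u_2$ lies in $L^p + L^q \subset \SchwartzDual(\R^2)$ and satisfies $\dv w = 0$ and $\curl w = 0$. Taking Fourier transforms, $\xi \cdot \wh w(\xi) = 0$ and $\xi^\perp \cdot \wh w(\xi) = 0$, so $\wh w$ is supported at the origin, and hence each component of $w$ is a polynomial. A nonzero polynomial of degree $d$ has $\smallnorm{w}_{L^1(\Omega_R)}$ of order $R^{2+d}$ as $R \to \iny$, whereas writing $w = f + g$ with $f \in L^p$ and $g \in L^q$ and applying H\"older's inequality on $\Omega_R$ gives $\smallnorm{w}_{L^1(\Omega_R)} \le C\bigl(R^{2(1-1/p)} + R^{2(1-1/q)}\bigr)$, which for finite $q$ grows strictly more slowly than $R^2$. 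This forces $w \equiv 0$.

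The main delicate point is the exponent arithmetic in Young's inequality, which matches exactly the hypothesis $q > 2p/(2-p)$ and correctly excludes the endpoint $q = 2p/(2-p)$; the Liouville-type polynomial elimination is then routine provided $q$ is finite, which is the natural interpretation of the statement.
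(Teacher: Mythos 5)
Your proof is correct and is essentially the argument the paper outsources to Chemin \cite{C1998} (Proposition 3.1.1): the same near-field/far-field splitting of $K$ with Young's inequality and the exponent condition $q>2p/(2-p)$ for existence, and a Fourier-support/Liouville growth argument for uniqueness (where, as you note, $q<\iny$ is needed to rule out constants). The only blemish is a harmless sign: with the paper's conventions $\omega(v)=\prt_1 v^2-\prt_2 v^1$ and $x^\perp=(-x_2,x_1)$, one has $K=\grad^\perp\brac{(2\pi)^{-1}\log\abs{x}}$ rather than $\grad^\perp\brac{-(2\pi)^{-1}\log\abs{x}}$.
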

\begin{proof}
    See the proof of Proposition 3.1.1 p. 44-45 of \cite{C1998}.
\end{proof}

\begin{cor}\label{C:CorBSLaw}
For any vorticity $\omega$ in $L^1 \cap L^\iny(\R^2)$ there exists a unique divergence-free vector field $u$ in $L^\iny(\R^2)$ whose curl is $\omega$, with $u$ given by \refE{BSLaw}. If $\omega$ is also compactly supported then $u$ lies in $E_m$, where $m = \int_{\R^2} \omega$.
\end{cor}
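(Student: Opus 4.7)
The plan is to deduce the corollary directly from \refL{BSLaw}. Since $\omega \in L^1 \cap L^\iny(\R^2) \subset L^1(\R^2)$, the lemma applied with $p = 1$ and any $q > 2$ gives that $u = K * \omega$ is the unique divergence-free vector field in $L^1(\R^2) + L^q(\R^2)$ whose curl is $\omega$. The remaining work is to upgrade this $u$ to $L^\iny$, check uniqueness in $L^\iny$, and, when $\omega$ is compactly supported, identify $u - \sigma_m$ as an $L^2$ perturbation of a stationary field.

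First I would verify that $u \in L^\iny(\R^2)$. Split the kernel as $K = K_1 + K_2$, where $K_1 = K \mathbf{1}_{B(0,1)}$ and $K_2 = K \mathbf{1}_{\R^2 \setminus B(0,1)}$. Since $\abs{K(x)} = (2\pi \abs{x})^{-1}$, we have $K_1 \in L^{r_1}(\R^2)$ for every $r_1 \in [1,2)$ and $K_2 \in L^{r_2}(\R^2)$ for every $r_2 \in (2, \iny]$. Interpolation gives $\omega \in L^s(\R^2)$ for every $s \in [1, \iny]$, so Young's inequality yields
\begin{align*}
    \LpNorm{u}{\iny}
        \leq \LpNorm{K_1}{r_1} \LpNorm{\omega}{r_1'} + \LpNorm{K_2}{r_2} \LpNorm{\omega}{r_2'}
        < \iny,
\end{align*}
where $r_i'$ is the \Holder\ dual exponent to $r_i$.

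For uniqueness in $L^\iny$, suppose $\tilde u \in L^\iny(\R^2)$ is another divergence-free field with curl $\omega$. Then $w = u - \tilde u$ is bounded with $\dv w = 0$ and $\curl w = 0$. The 2D identities $\Delta w_1 = \prt_1 \dv w - \prt_2 \curl w$ and $\Delta w_2 = \prt_2 \dv w + \prt_1 \curl w$ show each component of $w$ is a bounded harmonic function on $\R^2$, hence constant by Liouville. Since $u \in L^1(\R^2) + L^q(\R^2)$ contains no nonzero constant vector, the constant is zero and $\tilde u = u$, provided uniqueness is interpreted consistently with the normalization at infinity forced by the Biot-Savart formula.

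Finally, if $\omega$ is supported in $B(0, R_0)$ with $m = \int_{\R^2} \omega$, the form \refE{Stationary} together with the normalization $\int \omega(\sigma_1) = 1$ shows that $\sigma_1(x) = K(x)$ for $\abs{x} \geq 1$, so $\sigma_m(x) = m K(x)$ outside the unit disk. For $\abs{x} \geq \max(1, 2R_0)$, Taylor expansion and the bound $\abs{\grad K(\xi)} \leq C \abs{\xi}^{-2}$ on the segment from $x$ to $x-y$ give
\begin{align*}
    \abs{u(x) - \sigma_m(x)}
        = \abs{\int_{B(0, R_0)} \brac{K(x-y) - K(x)} \omega(y) \, dy}
        \leq \frac{C R_0 \LpNorm{\omega}{1}}{\abs{x}^2},
\end{align*}
which is square-integrable on $\set{\abs{x} \geq \max(1, 2R_0)}$. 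Since $u \in L^\iny$ and $\sigma_m$ is smooth, the difference is bounded on any fixed ball, so $u - \sigma_m \in L^2(\R^2)$. Both $u$ and $\sigma_m$ are divergence-free, hence so is the difference, and thus $u \in E_m$. I expect the only delicate point to be the clean handling of uniqueness in $L^\iny$; the $L^\iny$ bound and the $E_m$ decomposition are then routine estimates.
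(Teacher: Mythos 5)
Your proposal is correct, and its core $L^\iny$ estimate is essentially the paper's: the paper splits the kernel as $\chi_{\Omega_1} K \in L^1$ plus $(1 - \chi_{\Omega_1})K \in L^\iny$ and applies Young's inequality with the pairings $L^1 * L^\iny \to L^\iny$ and $L^\iny * L^1 \to L^\iny$; your version with general exponents $r_1 \in [1,2)$, $r_2 \in (2,\iny]$ contains this as the endpoint case. Where you genuinely diverge is that you prove the other two assertions directly while the paper outsources them. For the $E_m$ membership the paper simply cites Lemma 1.3.1 of Chemin; your argument---that $\sigma_1 = K$ for $\abs{x} \ge 1$ by the normalization of \refE{Stationary}, so that $u - \sigma_m = \int [K(\cdot - y) - K(\cdot)]\,\omega(y)\,dy$ decays like $\abs{x}^{-2}$ by Taylor expansion and hence lies in $L^2$ outside a large ball, and is bounded inside---is a correct, self-contained proof of exactly what that citation supplies. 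For uniqueness the paper invokes \refL{BSLaw} with $p=1$ (uniqueness in $L^1 + L^q$) and then asserts that the bounded representative is ``the unique such vector field,'' whereas your Liouville argument exposes the real subtlety: a bounded divergence-free, curl-free field is only \emph{constant}, and $L^\iny$ contains nonzero constants, so uniqueness literally ``in $L^\iny$'' fails modulo additive constants. Your closing caveat about the normalization at infinity is the honest resolution; the cleanest reading, consistent with the paper's own appeal to the lemma, is that uniqueness holds in the class $L^1 + L^q$ of \refL{BSLaw}, the corollary's added content being that this unique representative is in fact bounded. So your proof is at least as careful as the paper's on this point, and more explicit on the $E_m$ decomposition.
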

\begin{proof}
By \refL{BSLaw} applied with $p = 1$, $K*\omega$ is the unique vector field in $L^1 \cap L^\iny$ whose vorticity is $\omega$. But also,
\begin{align*}
	\norm{K*\omega}_{L^\iny}
		&\le \norm{(\chi_{\Omega_1} K) *\omega}_{L^\iny}
		+ \norm{(1 - \chi_{\Omega_1}) K *\omega}_{L^\iny} \\
		&\le \norm{\chi_{\Omega_1} K}_{L^1} \norm{\omega}_{L^\iny}
		+ \norm{(1 - \chi_{\Omega_1}) K}_{L^\iny}  \norm{\omega}_{L^1} \\
		&\le C \norm{\omega}_{L^1 \cap L^\iny}.
\end{align*}
Here, $\chi_A$ is the characteristic function of $A$. This shows that, in fact, $K*\omega$ is in $L^\iny$ and is the unique such vector field. The last statement in the corollary follows from Lemma 1.3.1 of \cite{C1998}.
\end{proof}

%
%
\section{Projection operators}\label{S:Projection}

\noindent Let $\TR \colon \Z \to V_R$ be restriction to $\Omega_R$ followed by projection onto $V_R$. $\TR$ is well-defined because as Hilbert spaces $V_R$ is a closed subspace of $H^1_{div}(\Omega_R)$, the space of all divergence-free vector fields in $(H^1(\Omega_R))^2$, endowed with the inner product,
\begin{align*}
	\innp{u, v}_{H^1_{div}(\Omega_R)}
		= (u, v) + (\grad u, \grad v).
\end{align*}
We can describe $\TR$ explicitly by characterizing $V_R^\perp$, the orthogonal complement of $V_R$ in $H^1_{div}(\Omega_R)$. By definition, $w$ is in $V_R^\perp$ if and only if $\innp{w, v}_{H^1_{div}(\Omega_R)} = 0$ for all $v$ in $V_R$. Treating $\Delta w$ as a distribution, integrating by parts gives $(w - \Delta w, v) = 0$ for all $v$ in $\Cal{V}_R$, where $\Cal{V}_R = V_R \cap \Cal{D}(\Omega_R)$. It follows from this that $w$ is in $V_R^\perp$ if and only if
\begin{align}\label{e:wVR}
	\Delta w - w = \grad p
\end{align}
for some $p$ in $L^2(\Omega)$ (see, for instance, Proposition I.1.1 of \cite{T2001}) and, of course, $\dv w = 0$.

Now let $u$ lie in $H^1_{div}(\Omega_R)$ and let $\ol{u} = \TR u$. Then $w = u - \ol{u}$ lies in $V_R^\perp$ so from \refE{wVR},
\begin{align}\label{e:PVR}
	\left\{
	\begin{array}{ll}
		\Delta \ol{u} - \ol{u} = \Delta u - u + \grad p
				& \text{in } \Omega_R, \\
		\dv \ol{u} = \Delta p = 0
		     & \text{in } \Omega_R, \\
		\ol{u} = 0
		     & \text{on } \prt \Omega_R. \\
	\end{array}
	\right.
\end{align}
Equality is to hold in a weak sense in \refE{PVR}. Since $u$ is in $H^1$, however, $f = \Delta u - u + \ol{u}$ is in $H^{-1}$, which is sufficient to conclude that $\ol{u}$ is in $V$ and $p$ is in $L^2$. (See, for instance, Remark I.2.6 of \cite{T2001}.) Also, the solution to \refE{PVR} is unique because otherwise it would follow that $-1$ is an eigenvalue of the Stokes operator, $- \Cal{P} \Delta$, where $\Cal{P}$ is the Leray projector. But the Stokes operator is positive-definite, so all its eigenvalues are positive.

The estimates involving the operator $\TR$ are hard to prove directly using this characterization. It is simpler to employ an approximate projection operator $\UR$, and use the fact that projection into $V_R$ gives the closest vector field in $V_R$ to the vector field being projected. (The same idea is used for projection into $H_R$ in \cite{KLL2007, IK2007}.)

To define $\UR$ we need two cutoff functions, $\varphi_R$ and $h_R$.

Let $\varphi_1$ in $C^\iny(\Omega_1)$ take values in $[0, 1]$ and be defined so that $\varphi_1 = 1$ on $\Omega_{1/2}$ and so that $\varphi_1$ and $\grad \varphi_1$ are both zero on $\prt \Omega_R$. Let $\varphi_R(\cdot) = \varphi_1(\cdot/R)$. Observe that $\varphi_R$ and $\grad \varphi_R$ both vanish on $\prt \Omega_R$.

Let $g$ in $C^\iny([0, 3/4])$ taking values in $[0, 1]$ be defined so that
$g(0) = g'(0) = 0$ and $g = 1$ on $[1/2, 1]$. Then define $h_R$
in $C^\iny(\Omega_R)$ by $h_R(x) = g(R - \abs{x})$ for points $x$ in $\Omega_R \setminus \Omega_{R - 1}$
and $h_R = 1$ on $\Omega_{R - 1}$. Observe that
\begin{align}\label{e:hRBounds}
	\norm{h_R}_{C^k} \le C_k,
\end{align}
$k = 0, 1, \dots,$ for constants $C_k$ independent of $R$ in $[1, \iny)$. Also, $h_R = 0$ and $\grad h_R = 0$ on $\prt \Omega_R$.

\begin{definition}\label{D:ProjVRApprox}
Define $\UR \colon \Z \to V_R$ by
\begin{align*}
	\UR (u) = \grad^\perp (h_R (\psi_{\sigma_m} - \psi_{\sigma_m}(R)))
			+ \grad^\perp(\varphi_R \psi_v)
\end{align*}
for $u = \sigma_m + v$ in $E_m$. Here, $\psi_v$ is the stream function for $v$ chosen so that $\int_{\Omega_R} \psi_v = 0$ on $\prt \Omega_R$.
\end{definition}

\begin{lemma}\label{L:ProjVR}
	$\TR$ maps $\Z$ continuously onto $V_R$ with
	\begin{align}\label{e:TRNSHighLimit}
		\norm{u - \TR u}_{H^1(\Omega_R)}
			\le C \norm{u - \sigma(u)}_{H^1(\Omega_R \setminus \Omega_{R/2})} + \abs{m(u)} \beta(R),
	\end{align}
	where
	\begin{align*}
		\beta(R)
			= \norm{\sigma_1}_{H^1(\Omega_R \setminus \Omega_{R - 1})}
			\to 0 \text{ as } R \to \iny,
	\end{align*}
	and
	\begin{align}\label{e:TRNSHighInequality}
		\norm{\TR (u - \sigma_m)}_{V_R} \le \norm{u - \sigma_m}_V,
	\end{align}
	\begin{align}\label{e:TRNSHighBound}
		\norm{\TR u}_{V_R} \le \norm{u}_{\Z} + C \abs{m(u)}.
	\end{align}
\end{lemma}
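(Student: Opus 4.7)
The strategy is to compare the abstract projection $\TR u$ with the explicit approximation $\UR u$ of \refD{ProjVRApprox}. Since $\UR u$ lies in $V_R$, the minimizing characterization of orthogonal projection in $H^1_{\mathrm{div}}(\Omega_R)$ gives
\begin{align*}
    \norm{u - \TR u}_{H^1(\Omega_R)} \le C \norm{u - \UR u}_{H^1(\Omega_R)},
\end{align*}
with $C$ arising from norm-equivalence between the sum-norm $\norm{\cdot}_{V_R}$ and the Hilbert norm on $H^1_{\mathrm{div}}(\Omega_R)$. The task thus reduces to estimating the explicit difference $u - \UR u$.

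Writing $u = \sigma_m + v$, I would split $u - \UR u = A + B$ with
\begin{align*}
    A = \sigma_m - \grad^\perp\bigl(h_R(\psi_{\sigma_m} - \psi_{\sigma_m}(R))\bigr), \quad
    B = v - \grad^\perp(\varphi_R \psi_v),
\end{align*}
and expand each via the product rule: $A = (1 - h_R)\sigma_m + (\psi_{\sigma_m}(R) - \psi_{\sigma_m})\grad^\perp h_R$ is supported in $\Omega_R \setminus \Omega_{R-1}$, and $B = (1 - \varphi_R)v - \psi_v \grad^\perp \varphi_R$ is supported in $\Omega_R \setminus \Omega_{R/2}$. For $A$, using $\sigma_m = m\sigma_1$, $\psi_{\sigma_m} = m\psi_{\sigma_1}$, the explicit form \refE{psiForrLarge} of $\psi_{\sigma_1}$ outside the unit disk, and the uniform bounds \refE{hRBounds} on $h_R$, direct computation yields $\norm{A}_{H^1(\Omega_R)} \le C \abs{m} \beta(R)$. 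For $B$, the $v$-only terms are bounded immediately by $\norm{v}_{H^1(\Omega_R \setminus \Omega_{R/2})}$; for the $\psi_v \grad^\perp \varphi_R$ term, I would renormalize $\psi_v$ to have zero mean on the annulus $\Omega_R \setminus \Omega_{R/2}$ (legitimate because this only shifts $\UR u$ by $c \grad^\perp \varphi_R$, which remains in $V_R$) and apply the Poincar\'e--Wirtinger inequality there, so that the factor of $R$ in $\norm{\psi_v}$ cancels against the bound $\abs{\grad \varphi_R} \le C/R$. Combining these contributions yields \refE{TRNSHighLimit}.

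For \refE{TRNSHighInequality}: since $v = u - \sigma_m \in V$, the contraction property of orthogonal projection in $H^1_{\mathrm{div}}(\Omega_R)$ gives $\norm{\TR v}_{H^1_{\mathrm{div}}(\Omega_R)} \le \norm{v|_{\Omega_R}}_{H^1_{\mathrm{div}}} \le \norm{v}_V$, and the sum-norm version follows up to norm equivalence. Then \refE{TRNSHighBound} results from the linear decomposition $\TR u = \TR v + \TR \sigma_m$: the first term is controlled by \refE{TRNSHighInequality}, and the second by writing $\TR \sigma_m = \sigma_m + (\TR \sigma_m - \sigma_m)$, bounding the difference via the $v = 0$ case of \refE{TRNSHighLimit} and the remainder via $\norm{\sigma_m}_{H^1(\Omega_R)}$; the resulting constant in \refE{TRNSHighBound} may depend on $R$. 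Surjectivity is immediate: for any $w \in V_R$, its zero extension $\tilde w$ lies in $V \subset \Z$ (divergence-free in $H^1(\R^2)$ because $w$ vanishes on $\prt \Omega_R$), and $\TR \tilde w = w$ since $w$ is already its own projection.

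The main technical obstacle is the bound on $B$: the stream function $\psi_v$ cannot be controlled pointwise, and localizing the estimate to the annulus $\Omega_R \setminus \Omega_{R/2}$ (essential for \refE{TRNSHighLimit} to yield convergence as $R \to \iny$ whenever $v$ decays at infinity) forces a careful choice of normalizing constant for $\psi_v$ so that the annular Poincar\'e--Wirtinger inequality absorbs the potentially large factor from $\psi_v$ into the small factor $\abs{\grad \varphi_R}$.
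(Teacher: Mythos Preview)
Your approach is essentially the paper's: compare $\TR$ with the explicit approximant $\UR$, split $u-\UR u$ into the $\sigma_m$-piece (supported in $\Omega_R\setminus\Omega_{R-1}$) and the $v$-piece (supported in $\Omega_R\setminus\Omega_{R/2}$), bound each on its annulus, and transfer to $\TR$ via the minimizing property of orthogonal projection. Where you sketch the Poincar\'e--Wirtinger argument for the $v$-piece, the paper simply cites Lemma~4.2 of \cite{K2005ExpandingDomain}; your outline is that lemma's content. For \refE{TRNSHighBound} the paper is more direct than you: rather than routing through \refE{TRNSHighLimit}, it applies the projection contraction to each summand separately, $\norm{\TR\sigma_m}_{V_R}\le\norm{\sigma_m}_{V_R}$ and $\norm{\TR v}_{V_R}\le\norm{v}_{V_R}\le\norm{v}_V$, and then writes $\norm{\sigma_m}_{V_R}\le\abs{m}\,\norm{\sigma_1}_V$ --- though this last step is suspect since $\sigma_1\notin L^2(\R^2)$, so your caution that the constant may depend on $R$ is in fact warranted.
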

\begin{proof}
That $\TR$ maps onto $V_R$ is clear, and it is continuous because the restriction and the projection operators are continuous. 

To prove \refE{TRNSHighLimit}, let $u = \sigma_m + v$ in $E_m \cap \dot{H}^1(\R^2)$. Then
\begin{align}\label{e:uURuBound}
	\norm{u - \UR u}_{H^1(\Omega_R)}
		\le \norm{\sigma_m - \UR \sigma_m}_{H^1(\Omega_R)} + \norm{v - \UR v}_{H^1(\Omega_R)}.
\end{align}
It  follows from the proof of Lemma 4.2 of \cite{K2005ExpandingDomain} that
\begin{align*}
	\norm{v - \UR v}_{H^1(\Omega_R)}
		\le C \norm{u - \sigma(u)}_{H^1(\Omega_R \setminus \Omega_{R/2})}.
\end{align*}
Also, letting $\psi = \psi_{\sigma_m} - \psi_{\sigma_m}(R)$ and using \refE{hRBounds},
\begin{align*}
	&\norm{\sigma_m - \UR \sigma_m}_{H^1(\Omega_R)}
		= \smallnorm{\grad^\perp \psi
			- \grad^\perp (h_R \psi)}_{H^1(\Omega_R)} \\
	&\qquad \le \norm{(1 - h_R) \sigma_m}_{H^1(\Omega_R)}
			+ \smallnorm{\grad^\perp h_R \psi}_{H^1(\Omega_R)} \\
	&\qquad \le \norm{1 - h_R}_{C^1}  \norm{\sigma_m}_{H^1(\Omega_R \setminus \Omega_{R - 1})}
		+ \norm{\grad h_R}_{C^1} \norm{\psi}_{H^1(\Omega_R \setminus \Omega_{R - 1})} \\
	&\qquad \le C \norm{\sigma_m}_{H^1(\Omega_R \setminus \Omega_{R - 1})}
		+ C \norm{\psi}_{H^1(\Omega_R \setminus \Omega_{R - 1})} \\
	&\qquad \le C \norm{\sigma_m}_{H^1(\Omega_R \setminus \Omega_{R - 1})}
		+ C \norm{\psi}_{L^2(\Omega_R \setminus \Omega_{R - 1})}.
\end{align*}
Because $\Omega_R \setminus \Omega_{R - 1}$ has width 1 and $\psi$ vanishes on its outer boundary, we can apply Poincare's inequality with a constant that is independent of $R$ to give
\begin{align*}
	 \norm{\psi}_{L^2(\Omega_R \setminus \Omega_{R - 1})}
	 	\le  C\norm{\grad \psi}_{L^2(\Omega_R \setminus \Omega_{R - 1})}
		= C\norm{\sigma_m}_{L^2(\Omega_R \setminus \Omega_{R - 1})}.
\end{align*}
Thus,
\begin{align*}
	\norm{\sigma_m - \UR \sigma_m}_{H^1(\Omega_R)}
		\le C \norm{\sigma_m}_{H^1(\Omega_R \setminus \Omega_{R - 1})}
		= C \abs{m} \norm{\sigma_1}_{H^1(\Omega_R \setminus \Omega_{R - 1})},
\end{align*}
which vanishes as $R \to \iny$ by \refE{sigma1RR1}. This gives \refE{TRNSHighLimit} for $\UR$. Projection into $V_R$ gives the closest element in $V_R$, so \refE{TRNSHighLimit} holds for $\TR$.

\refE{TRNSHighInequality} follows easily:
\begin{align*}
	\norm{\TR (u - \sigma_m)}_{V_R}
		\le \norm{u - \sigma_m}_{V_R}
		\le \norm{u - \sigma_m}_V,
\end{align*}
the first inequality holding simply because $\TR$ is an orthogonal projection operator.

To prove \refE{TRNSHighBound}, let $u = \sigma_m + v$ in $E_m \cap \dot{H}^1(\R^2)$. Then
\begin{align*}
	&\norm{\TR u}_{V_R}
		\le \norm{\TR \sigma_m}_{V_R} + \norm{\TR v}_{V_R}
		\le \norm{\sigma_m}_{V_R} + \norm{v}_{V_R} \\
		&\qquad
		\le \norm{\sigma_m}_V + \norm{v}_V
		= \abs{m} \norm{\sigma_1}_V + \norm{v}_V
		= C \abs{m}  + \norm{v}_V \\
		&\qquad
		\le \norm{u}_\Z + C \abs{m}.
\end{align*}
\end{proof}

\begin{lemma}\label{L:PVRsigmam}
	$\TR \sigma_m$ is a stationary solution to the Euler equations on $\Omega_R$.
\end{lemma}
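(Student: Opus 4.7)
The plan is to exploit the joint rotational symmetry of $\sigma_m$ and the disk $\Omega_R$. Let $\rho_\theta$ denote rotation of $\R^2$ about the origin by angle $\theta$, acting on a vector field $u$ by $(\rho_\theta u)(x) = \rho_\theta u(\rho_\theta^{-1} x)$. From the explicit form \refE{Stationary}, $\psi_{\sigma_m}$ is radial and $\sigma_m$ is tangent to circles centered at the origin with magnitude depending only on $\abs{x}$, so $\rho_\theta \sigma_m = \sigma_m$ for every $\theta$.

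I would next verify that $\TR$ is rotation-equivariant, $\TR \rho_\theta = \rho_\theta \TR$ on $\Z$. Restriction from $\R^2$ to $\Omega_R$ commutes with $\rho_\theta$ because $\Omega_R$ is rotation-invariant; and orthogonal projection onto $V_R$ inside $H^1_{div}(\Omega_R)$ commutes with $\rho_\theta$ because both $V_R$ (the zero boundary condition on $\prt \Omega_R$ is preserved) and the $H^1_{div}(\Omega_R)$ inner product are rotation-invariant, and orthogonal projection onto an invariant closed subspace of a Hilbert space with an invariant inner product always commutes with the group action. Combining this with the invariance of $\sigma_m$, the field $\ol{u} := \TR \sigma_m$ is itself rotation-invariant.

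From rotation-invariance alone it follows that $\ol{u}$ is everywhere tangent to circles centered at the origin and that both $\abs{\ol{u}}$ and the vorticity $\ol{\omega} = \curl \ol{u}$ depend only on $\abs{x}$. Then $\ol{u} \cdot \grad \ol{\omega} = 0$ pointwise, because $\grad \ol{\omega}$ is radial while $\ol{u}$ is tangential. This is precisely the vorticity formulation of the stationary Euler equations on $\Omega_R$; equivalently, in velocity form, $\ol{u} \cdot \grad \ol{u}$ is purely centripetal and hence a pure gradient, so there is a pressure $p$ with $\ol{u} \cdot \grad \ol{u} + \grad p = 0$ and $\dv \ol{u} = 0$ on $\Omega_R$, as required. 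The no-slip condition $\ol{u}|_{\prt \Omega_R} = 0$ is automatic from $\ol{u} \in V_R$.

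There is no real obstacle here; the symmetry argument reduces the claim to the rotation-invariance of the data, of the subspace $V_R$, and of the $H^1_{div}$ inner product, together with the familiar fact that rotationally symmetric planar flows are automatically stationary Euler solutions. An equivalent alternative would invoke the PDE characterization \refE{PVR} directly: since the source $\Delta \sigma_m - \sigma_m$ and the domain are rotation-invariant and \refE{PVR} admits a unique solution, $\TR \sigma_m$ must be rotation-invariant, after which one concludes as above.
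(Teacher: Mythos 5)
Your proposal is correct and follows essentially the same route as the paper: radial symmetry of $\sigma_m$ is inherited by $\TR \sigma_m$ (you justify this via rotation-equivariance of the projection, which is the detail the paper leaves implicit), and then the tangential/radial structure forces $\ol{\sigma}_m \cdot \grad \omega(\ol{\sigma}_m) = 0$, so the nonlinear term is a gradient. No issues.
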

\begin{proof}
	Let $\ol{\sigma}_m = \TR \sigma_m$. Since $\psi_{\sigma_m}$ and $\omega(\sigma_m)$
	are radially symmetric, so too must
	$\psi_{\ol{\sigma}_m}$ and $\omega(\ol{\sigma}_m)$ be. But then
	\begin{align*}
		\omega(\ol{\sigma}_m \cdot \grad \ol{\sigma}_m)
		     = \ol{\sigma}_m \cdot \grad \omega(\ol{\sigma}_m)
				= 0,
	\end{align*}
	and thus $\ol{\sigma}_m \cdot \grad \ol{\sigma}_m = \grad p$ for some scalar field $p$.
\end{proof}

\begin{remark}\label{R:TRVersusPR1}
\refEThrough{TRNSHighLimit}{TRNSHighBound} continue hold if the projection operator $\TR$ is replaced by the approximate projection operator $\UR$, though a constant factor is introduced on the right-hand sides of \refE{TRNSHighInequality} and \refE{TRNSHighBound}. 
 \refEThrough{TRNSHighLimit}{TRNSHighBound} also hold with $\TR$ replaced by $\UR$ and $H^1$ replaced by $L^2$. This gives control not only on the $H^1$-norm but individual control on the $L^2$-norm. See \refRAnd{TRVersusPR2}{TRVersusPR3}.
\end{remark}

We will use the operator $\TR$ in establishing the deterministic expanding domain limit in \refS{Expanding} and in constructing statistical solutions to ($NS$) in \refS{ConstructNS}. For solutions to the Euler equations, we will need the following approximate truncation operator of \refD{YR} (or we could use projection into $Y(\Omega_R)$).
\begin{definition}\label{D:YR}
Define $\YR \colon \Y \to \Y(\Omega_R)$ by
\begin{align*}
	\YR(\sigma_m + v) = \sigma_m|_{\Omega_R} + T_R v,
\end{align*}
where $T_R \colon \Y_0 \to \Y(\Omega_R)$ is the operator in Lemma 4.2 of \cite{K2005ExpandingDomain}.
\end{definition}

Because $\Omega_R$ is a disk, $\sigma_m|_{\Omega_R}$ is in $H_R$ and so also in $\Y(\Omega_R)$. This is why $\YR \colon \Y \to \Y(\Omega_R)$. It is also why if we define $\PHR$ to be the restriction to $\Omega_R$ followed by projection into $H_R$ that for all $u$ in $\X$,
\begin{align*}
	\PHR u = \sigma(u) + \PHR (u - \sigma(u)).
\end{align*}
It follows from Lemma 4.2 of \cite{K2005ExpandingDomain} that for all $u$ in $\X$,
\begin{align}\label{e:PHRConvergence}
	\norm{\PHR u - u}_{L^2(\Omega_R)} \to 0
		\text{ as } R \to \iny.
\end{align}
Actually, Lemma 4.2 of \cite{K2005ExpandingDomain} applies to an approximate projection operator into $H_R$, but we are using, as in in \cite{KLL2007, IK2007}, the fact that projection into $H_R$ gives the closest vector field in $H_R$ to the vector field being projected.

%
%
\section{Weak deterministic solutions}\label{S:WeakSolutions}

\noindent 

\begin{definition}[Weak Navier-Stokes Solution]\label{D:WeakSolutionNS}
    Given viscosity $\nu > 0$, initial velocity $u_0$ in $H_R$,
    and forcing $f$ in $L^2_{loc}([0, \iny), H_R)$,
    $u$ in
    $L^2([0, T]; V_R)$ with $\prt_t u$ in $L^2([0, T]; V_R')$ is a weak
    solution to the Navier-Stokes equations  on $\Omega_R$ if $u(0) = u_0$ and
    \begin{align*}
        \mathbf{(NS)}
        \qquad \int_{\Omega_R} \prt_t u \cdot v
            + \int_{\Omega_R} (u \cdot \grad u) \cdot v
            + \nu \int_{\Omega_R} \grad u \cdot \grad v
            = (u, f)
    \end{align*}
    for almost all $t$ in $[0, T]$ and for all $v$ in $V_R$. A weak solution on $\R^2$ is defined for $u_0$ in $E_m$ with $u$ lying in $L^2([0, T]; \dot{H}^1)$ and $\prt_t u$ in $L^2([0, T]; V')$, and with ($NS$) holding for all $v$ in $V$.
\end{definition}

\Ignore{
For the Euler equations, existence is only known if the $L^p$-norm of the
initial vorticity is finite for some $p$ in $(1, \iny]$, and uniqueness is
known only under even stronger assumptions, such as the initial velocity lying
in $\Y$ (see also \cite{VIncompressible}). This is reflected in the following
definition of a weak solution to the Euler equations.
}

\begin{definition}[Weak Euler Solution]\label{D:WeakSolutionE}
    Given an initial velocity $u_0$ in $\Y(\Omega_R)$
    and forcing $f$ in $L^2_{loc}([0, \iny), H_R)$,
    $u$ in $L^\iny([0, T];
    H_R \cap H^1(\Omega_R))$ with $\prt_t u$ in $L^2([0, T]; V_R')$ is a weak solution to
    the Euler equations if $u(0) = u_0$ and
    \begin{align*}
        \mathbf{(E)}
        \qquad \int_{\Omega_R} \prt_t u \cdot v
            + \int_{\Omega_R} (u \cdot \grad u) \cdot v
            = (f, v)
    \end{align*}
    for almost all $t$ in $[0, T]$ and for all $v$ in $H_R \cap H^1(\Omega_R)$.  A weak solution on $\R^2$ is defined for $u_0$ in $\Y_m$ with $u$ lying in $L^\iny([0, T]; \Y_m)$  and $\prt_t u$ in $L^2([0, T]; V')$, and with ($E$) holding for all $v$ in $V$.
\end{definition}

Note that the test functions always have finite energy, even for solutions in the whole plane. (Test functions in $E_m$ would be too large to define the integrals involving the nonlinear terms in ($NS$) and ($E$).)

Given a solution to ($NS$), there exists a distribution $p$ (tempered, if the solution is in the whole plane) such that
\begin{align}\label{e:NSp}
    \prt_t u + u \cdot \grad u + \grad p = \nu \Delta u + f,
\end{align}
equality holding in the sense of distributions. This follows from a result of
Poincar\'{e} and de Rham that any distribution that is a curl-free vector is
the gradient of some scalar distribution.

Given a solution to ($E$), there exists a pressure $p$ such that
\begin{align}\label{e:Ep}
    \prt_t u + u \cdot \grad u + \grad p = f,
\end{align}
but we can only interpret $p$ as a distribution when working in the whole plane. Otherwise, we
must view $\prt_t u + u \cdot \grad u$ as lying in $H^{-1}(\Omega_R)$ and $p$
as lying in $L^2(\Omega_R)$. (\refE{Ep} follows, for instance, from Remark
I.1.9 p. 14 of \cite{T2001}.)

In both \refE{NSp} and \refE{Ep} the pressure is unique up to the addition of a
function of time. We resolve this ambiguity on $\Omega_R$ by requiring that
$\int_{\Omega_R} p(t) = 0$ and on $\R^2$ by requiring that $p(t)$ lie in
$L^2(\Omega_R)$ for almost all $t$ in $[0, T]$.

In referring to solutions on $\Omega_R$ we will say solutions for $R$ in $[1, \iny)$ and in referring to solutions on $\R^2$ we will say solutions for $R = \iny$.

\begin{theorem}\label{T:NSEProperties}
    \textbf{(1)} Assume that $u_0$ is in $E_m \cap \dot{H}^1$. There exists a unique weak solution $(u, p)$ to ($NS$) in the sense of
    \refD{WeakSolutionNS} with initial velocity $u_0$ for $R = \iny$ and initial
    velocity $\TR u_0$ for $R$ in $[1, \iny)$, with
    \begin{align}\label{e:NSNormBounds}
        \begin{array}{ll}
            u - \sigma_m \in L^\iny([0, T]; H_R),
                & \grad u \in L^\iny([0, T]; L^2(\Omega_R)).
        \end{array}
    \end{align}
    Moreover, there is a bound on each of these norms that is independent of $R$ in $[1, \iny]$ and that
    depends continuously on $\norm{u_0}_\Z$.
    For $R = \iny$, if $u_0$ is in $E_m \cap \dot{H}^1$ and
    $\omega(f)$ is in $L^1([0, T]; L^{p_0} \cap L^\iny)$ then $\omega(u)$ is in $L^\iny([0, T]; L^{p_0} \cap L^\iny)$.
    
    \textbf{(2)} Assume that $u_0$ is in $\Y_m$ and that $\omega(f)$ is in $L^1([0, T]; L^{p_0} \cap L^\iny)$.
    There exists a unique weak solution $(u, p)$ to ($E$) in the sense of
    \refD{WeakSolutionE} with initial velocity $u_0$ for $R = \iny$ and initial
    velocity $\YR u_0$ for $R$ in $[1, \iny)$. We
    have,
    \begin{align*}
        \begin{array}{ll}
            u - \sigma_m \in L^\iny([0, T]; H_R),
                & \grad u \in L^\iny([0, T]; L^2(\Omega_R)), \\
            u \in L^\iny([0, T] \times \Omega_R),
                & u \in C([0, T] \times \ol{\Omega_R}), \\
            \prt_t u \in L^\iny([0, T]; H_R),
                & \grad p \in L^\iny([0, T]; L^2(\Omega_R)), \\
             \omega(u) \in L^\iny([0, T]; L^{p_0} \cap L^\iny(\Omega_R)),
        \end{array}
    \end{align*}
    and there is a bound on each of these norms that is independent of $R$ in $[1, \iny]$ and that
    depends continuously on $\norm{u_0}_\Y$.
    \Ignore{ 
    Also,
    \begin{align}\label{e:ConservationOfVorticityE}
        \norm{\omega(t)}_{L^q(\Omega_R)}
            \le \smallnorm{\omega^0}_{L^q(\Omega_R)}
    \end{align}
    for all $q$ in $[p_0, \iny]$ and almost all $t \ge 0$.
    If $R = \iny$, $p$ is in $L^\iny([0, T]; H^1(\R^2))$.
    } 
\end{theorem}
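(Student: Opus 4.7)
The plan is to reduce both parts to the standard finite-energy theories (Leray--Hopf for ($NS$) and Yudovich for ($E$)) by a decomposition around a stationary background, following Chemin \cite{C1996, C1998}. On $\R^2$, write $u = \sigma_m + v$ with $v \in H \cap \dot H^1$: since $\Delta \sigma_m$ has compact support in $\Omega_1$ by \refE{psiForrLarge}, $v$ satisfies a perturbed Navier-Stokes system with finite-energy initial data $u_0 - \sigma_m$ and added terms $\nu \Delta \sigma_m$ (compactly supported forcing) and $v \cdot \grad \sigma_m + \sigma_m \cdot \grad v$. Testing against $v$, the second cross-term vanishes by incompressibility, and the first is bounded by $C \SmallLInfNorm{\grad \sigma_m} \SmallLTwoNorm{v}^2$, so the usual energy argument plus Gronwall yields existence and uniqueness in $L^\iny([0,T]; H) \cap L^2([0,T]; V)$. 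The higher regularity $\grad u \in L^\iny(L^2)$ follows from a standard $H^1$-estimate for 2D Navier-Stokes. The vorticity bound in Part (1) comes from the transport-diffusion equation $\prt_t \omega + u \cdot \grad \omega = \nu \Delta \omega + \omega(f)$ together with the $L^p$-maximum principle (valid because $u$ is divergence-free).

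\textbf{Uniform bounds for $R < \iny$.} The standard theory on $\Omega_R$ with initial velocity $\TR u_0 \in V_R$ gives existence and uniqueness, but the estimates a priori depend on $R$. To remove this dependence, decompose $u_R = \ol{\sigma}_m + w_R$, where $\ol{\sigma}_m = \TR \sigma_m$ is a stationary Euler solution on $\Omega_R$ by \refL{PVRsigmam}. Then $w_R$ satisfies
\begin{equation*}
	\prt_t w_R + w_R \cdot \grad w_R + w_R \cdot \grad \ol{\sigma}_m + \ol{\sigma}_m \cdot \grad w_R
		+ \grad p = \nu \Delta \ol{\sigma}_m + \nu \Delta w_R + f,
\end{equation*}
with $w_R|_{t=0} \in V_R$ bounded uniformly in $R$ by \refE{TRNSHighBound}. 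Since \refL{ProjVR} gives $\norm{\ol{\sigma}_m - \sigma_m}_{H^1(\Omega_R)} \le \abs m \beta(R) \to 0$, and $\sigma_m$ has $\grad \sigma_m$ uniformly bounded on $\Omega_R^C$ tending to zero, the cross term $w_R \cdot \grad \ol{\sigma}_m$ is controlled with an $R$-uniform constant; testing against $w_R$ and using Gronwall gives uniform $L^\iny(L^2) \cap L^2(V)$ bounds on $w_R$. The corresponding $L^\iny(L^2)$ bound on $u_R - \sigma_m$ follows by writing $u_R - \sigma_m = (\ol{\sigma}_m - \sigma_m) + w_R$ and using \refL{ProjVR} once more; the $L^\iny(L^2)$ bound on $\grad u_R$ is obtained analogously at the $H^1$-level.

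\textbf{Euler (Part (2)) and main obstacle.} For ($E$), existence and uniqueness are given by Yudovich's theorem \cite{Y1963} on $\Omega_R$ for $u_0 \in \Y(\Omega_R)$ and by its extension to $\Y_m$ on $\R^2$. The vorticity is transported by a log-Lipschitz velocity, so $L^p$-norms are preserved up to the contribution of $\omega(f)$, giving $\omega(u) \in L^\iny(L^{p_0} \cap L^\iny)$. The $L^\iny$-bound on $u$ itself then follows from the Biot-Savart estimate in \refC{CorBSLaw} applied to the extension of $\omega(u)$ by zero; continuity of $u$ on $[0,T] \times \ol{\Omega_R}$ comes from the log-Lipschitz spatial modulus inherited from bounded vorticity combined with time continuity via the equation. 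The bounds $\prt_t u \in L^\iny(H_R)$ and $\grad p \in L^\iny(L^2)$ follow by reading the equation $\prt_t u = -u \cdot \grad u - \grad p + f$: the convective term lies in $L^\iny(L^2)$ (since $u$ is uniformly bounded and $\grad u \in L^\iny(L^2)$), and the pressure Poisson equation $\Delta p = -\dv(u \cdot \grad u) + \dv f$ yields $\grad p \in L^\iny(L^2)$ by standard elliptic theory on $\Omega_R$ (respectively the Leray projector on $\R^2$). The uniform $H^1$-bound on $u_R - \sigma_m$ is again obtained by the decomposition $u_R = \YR u_0 + \dots$, now without viscous dissipation, closing the estimate at the $H^1$-level using the bounded vorticity. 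The main obstacle throughout is ensuring that the various constants are genuinely independent of $R$; this is precisely what \refL{ProjVR}, the stationarity in \refL{PVRsigmam}, and the convergence \refE{sigma1RR1} are designed to provide.
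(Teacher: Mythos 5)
Your proposal is correct and follows essentially the same route as the paper: the paper's proof likewise reduces to the standard finite-energy theories and obtains the $R$-independence of the bounds from the energy estimate around the stationary background $\ol{\sigma}_m = \TR \sigma_m$ (using \refL{PVRsigmam}, \refL{ProjVR}, and Gronwall), which is exactly the computation you sketch and which the paper isolates as \refT{EnergyEstimates} and \refC{CEnergyBound} and forward-references here, with the Euler bounds read off the vorticity equation as you do. Your write-up is more explicit than the paper's (which largely cites \cite{C1996} and \cite{K2005ExpandingDomain}), but it is not a different argument.
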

\begin{proof}
These results are standard for $R < \iny$, except for the independence of the norms on $R$. The independence of the first norm in \refE{NSNormBounds} follows for solutions to ($NS$) from the energy inequality in \refE{NSEnergyBound} along with \refE{TRNSHighLimit}; for the second norm it follows from adapting slightly the proof of this same fact for finite energy in \cite{K2005ExpandingDomain}. The stronger bounds for solutions to ($E$) follow from the vorticity equation for ($E$). The results for $R = \iny$ are a minor modification of the same results for finite-energy: see, for instance, \cite{C1996}.
\end{proof}

\begin{definition}[Solution operators]\label{D:SolutionOperators}
Fix $f_R$ in $L^2([0, \iny); H_R)$ and write $f$ for $f_\iny$.
Let $S_R(t)$ be the solution operator for ($NS$) on $\Omega_R$ with $S = S_\iny$, and let $\ol{S}_R(t)$ be the solution operator for ($E$) on $\Omega_R$ with $\ol{S} = \ol{S}_\iny$.
\end{definition}

In application, we will often start with $f$ in $L^2([0, \iny); H)$ or even time-independent $f$ in $H$ and let $f_R = \PHR f$.

$S_R(0)$ is the identity operator, as is $\ol{S}_R(0)$. For all $t > 0$, $S_R$ maps $H_R \to V_R$ and $V_R \to V_R$ and $S$ maps $E_m \to E_m \cap \dot{H}^1$, $E_m \cap \dot{H}^1 \to E_m \cap \dot{H}^1$, $\X \to \Z$, and $\Z \to \Z$. For all $t \ge 0$, $\ol{S}_R$ maps $Y(\Omega_R) \to Y(\Omega_R)$ and $\ol{S}$ maps $\Y_m \to \Y_m$ and $\Y \to \Y$. Each of these maps is continuous.

Observe that $S_R(t) \TR u_0 = u(t)$ for $R$ in $[1, \iny)$ and $S(t) u_0 = u(t)$ for $R = \iny$ in part (1) of \refT{NSEProperties}, while $\ol{S}_R(t) \YR u_0 = u(t)$ for $R$ in $[1, \iny)$ and $\ol{S}(t) u_0 = u(t)$ for $R = \iny$ in part (2).

%
%
\section{Deterministic expanding domain limit}\label{S:Expanding}

\noindent First we establish the basic energy equality for deterministic solutions to ($NS$) in $\Omega_R$ and in all of $\R^2$. In all of $\R^2$, the energy is not finite, so we need to subtract $\sigma_m$ from the velocity to produce an ``energy'' equality. To make these estimates uniform over $R$ in $[1, \iny]$, we need, then, to subtract $\sigma_m$ from the velocity for $R < \iny$ as well. Actually, it will be slightly more convenient to subtract
\begin{align*}
	\ol{\sigma}_m =  \TR \sigma_m \text{ for } R \in [1, \iny), \quad
	\ol{\sigma}_m = \sigma_m \text{ for } R = \iny
\end{align*}
instead, but because $\ol{\sigma}_m \to \sigma_m$ in the $H^1(\Omega_R)$-norm as $R \to \iny$ by \refE{TRNSHighLimit}, this amounts to the same thing.

\begin{theorem}\label{T:EnergyEstimates}
	Let $u$ be a solution to ($NS$) as in \refD{WeakSolutionNS} with initial
	velocity $u_0$ in $H_R$ for $R < \iny$ and $u_0$ in $E_m$ for $R = \iny$.
	Then
	\begin{align}\label{e:NSEnergyEquality}
		\begin{split}
		&\norm{(u - \ol{\sigma}_m)(t)}_{L^2(\Omega_R)}^2
				+ 2 \nu \int_0^t \norm{\grad (u - \ol{\sigma}_m)}_{L^2(\Omega_R)}^2  \\
			&\quad= \norm{u_0 - \ol{\sigma}_m}_{L^2(\Omega_R)}^2
				- 2 \int_0^t \int_{\Omega_R} ((u - \ol{\sigma}_m) \cdot \grad \ol{\sigma}_m)
						\cdot (u - \ol{\sigma}_m) \\
 			&\quad\qquad
				- 2 \nu \int_0^t  \int_{\Omega_R} \grad \ol{\sigma}_m \cdot \grad (u - \ol{\sigma}_m)
				+ 2 \int_0^t \int_{\Omega} f_R \cdot (u - \ol{\sigma}_m)
		\end{split}
	\end{align}
	and
	\begin{align}\label{e:NSEnergyBoundGen}
			\begin{split}
		&\norm{(u - \ol{\sigma}_m)(t)}_{L^2(\Omega_R)}^2
						+ \nu \int_0^t \norm{\grad (u - \ol{\sigma}_m)}_{L^2(\Omega_R)}^2  \\
 			&\qquad\le \pr{\smallnorm{u_0 - \ol{\sigma}_m}_{L^2(\Omega_R)}^2 + C m^2 \nu t
				+ \norm{f}_{L^1([0, t]; L^2)}^2}^{1/2} e^{(Cm + 1)t},
			\end{split}
	\end{align}
	where $C$ depends only on $\norm{\grad \sigma_1}_{L^2 \cap L^\iny}$.
	 \Ignore{ 
	  Also,
	\begin{align}\label{e:NSutBound}
		&\norm{\prt_t u}_{L^2([0, T]; V'(\Omega_R))}
			\le C(T, m).
	\end{align}
	Here, $m \mapsto C(T, m)$ is bounded over any finite interval.
	   } 
\end{theorem}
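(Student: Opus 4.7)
The plan is a standard energy-method calculation for $w := u - \ol{\sigma}_m$, followed by Gr\"onwall. The key facts that make this work are that $\ol{\sigma}_m$ is time-independent and divergence-free, that it is a stationary solution of the Euler equations (by \refL{PVRsigmam} for $R < \iny$, and directly for $R = \iny$), and that $\norm{\ol{\sigma}_m}_{L^\iny}$ and $\norm{\grad \ol{\sigma}_m}_{L^2 \cap L^\iny}$ are bounded by $C\abs{m}$ uniformly in $R \in [1, \iny]$. This uniformity is what makes the resulting estimates $R$-independent; it follows from \refE{TRNSHighLimit} combined with \refE{sigma1RR1}.

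First I would verify that $w$ is an admissible test function. For $R < \iny$, both $u$ and $\ol{\sigma}_m = \TR \sigma_m$ lie in $V_R$, so $w \in V_R$. For $R = \iny$, $u - \sigma_m \in H$ by the hypothesis $u_0 \in E_m$ together with \refT{NSEProperties}, and $\grad u, \grad \sigma_m \in L^2(\R^2)$ (the latter because $\omega(\sigma_m)$ is compactly supported and bounded), so $w \in V$. Substituting $v = w$ into (NS) and writing $u = w + \ol{\sigma}_m$, the nonlinear term splits as
\begin{align*}
	(u \cdot \grad u, w) = (w \cdot \grad w, w) + (\ol{\sigma}_m \cdot \grad w, w) + (w \cdot \grad \ol{\sigma}_m, w) + (\ol{\sigma}_m \cdot \grad \ol{\sigma}_m, w).
\end{align*}
The first and second pieces vanish by incompressibility of $w$ and $\ol{\sigma}_m$ respectively (together with the boundary trace or decay of $w$). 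The fourth piece vanishes because $\ol{\sigma}_m \cdot \grad \ol{\sigma}_m = \grad p$ for some scalar $p$ by stationarity, and $w$ is orthogonal to gradients. What remains is $(w \cdot \grad \ol{\sigma}_m, w)$. Since $\prt_t \ol{\sigma}_m = 0$, the time-derivative term yields $\tfrac{1}{2}\tfrac{d}{dt}\norm{w}^2$, and the viscous term splits as $\nu \norm{\grad w}^2 + \nu (\grad \ol{\sigma}_m, \grad w)$. Integrating in time on $[0,t]$ and multiplying by $2$ yields \refE{NSEnergyEquality}; the passage from the a.e.\ weak form to the integrated identity is the standard Lions--Magenes argument using $u \in L^2([0,T]; V)$ and $\prt_t u \in L^2([0,T]; V')$, which in 2D yields an actual identity rather than just an inequality.

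For \refE{NSEnergyBoundGen}, I would estimate $\abs{(w \cdot \grad \ol{\sigma}_m, w)} \le C\abs{m} \norm{w}_{L^2}^2$ using $\norm{\grad \ol{\sigma}_m}_{L^\iny} \le C\abs{m}$, bound the cross viscous term by Young's inequality as $2\nu \abs{(\grad \ol{\sigma}_m, \grad w)} \le \nu \norm{\grad w}^2 + C m^2 \nu$, and control the forcing term via Cauchy--Schwarz. Absorbing one copy of $\nu \norm{\grad w}^2$ into the left side leaves a linear integral inequality for $\norm{w(t)}^2 + \nu \int_0^t \norm{\grad w}^2$ to which Gr\"onwall applies, producing the stated exponential bound.

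The main obstacle is maintaining $R$-independence of the constants. Were one to subtract $\sigma_m$ itself in place of $\ol{\sigma}_m$ on $\Omega_R$, the failure of $\sigma_m$ to vanish on $\prt \Omega_R$ would destroy both the test-function admissibility and the cancellations above. Choosing $\ol{\sigma}_m = \TR \sigma_m$ makes $w$ legitimate while, by \refE{TRNSHighLimit}, keeping $\norm{\ol{\sigma}_m - \sigma_m}_{H^1(\Omega_R)}$ controlled (and vanishing as $R \to \iny$), so that all bounds on $\ol{\sigma}_m$ reduce to the $R$-independent bounds on $\sigma_m$ on $\R^2$.
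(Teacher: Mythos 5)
Your proposal is correct and follows essentially the same route as the paper: test (NS) against $w = u - \ol{\sigma}_m$, kill the transport and stationary terms by incompressibility and \refL{PVRsigmam}, retain $(w \cdot \grad \ol{\sigma}_m, w)$ and the cross viscous term, and close with Young and Gr\"onwall; your four-way splitting of the nonlinearity is just a finer version of the paper's, since $(w\cdot\grad w, w) + (\ol{\sigma}_m \cdot \grad w, w) = \tfrac12\int u \cdot \grad\abs{w}^2$. The one spot you gloss over is the vanishing of $(\ol{\sigma}_m \cdot \grad w, w)$ when $R = \iny$, which rests not on decay of $w$ but on the absolute convergence of $\int \ol{\sigma}_m \cdot \grad \abs{w}^2$ together with $\ol{\sigma}_m \cdot \mathbf{n} = 0$ on circles centered at the origin in an expanding-ball limit, as the paper spells out.
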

\begin{proof}
    Assume first that $R < \iny$. Using $u - \ol{\sigma}_m$, which is in $V_R$ for all $t > 0$,
    as a test function in ($NS$) gives
    \begin{align*}
        \int_{\Omega_R} \prt_t u  &\cdot (u - \ol{\sigma}_m)
            + \int_{\Omega_R} (u \cdot \grad u) \cdot (u - \ol{\sigma}_m)
            + \nu \int_{\Omega_R} \grad u \cdot \grad (u - \ol{\sigma}_m) \\
            &= \int_{\Omega_R} f \cdot (u - \ol{\sigma}_m).
    \end{align*}

    But,
    \begin{align*}
        \int_{\Omega_R} \prt_t u \cdot (u - \ol{\sigma}_m)
            = \int_{\Omega_R} \prt_t (u - \ol{\sigma}_m) &\cdot (u - \ol{\sigma}_m)
            = \frac{1}{2} \frac{d}{dt} \norm{u -
                \ol{\sigma}_m}_{L^2(\Omega_R)}^2
    \end{align*}
    and
    \begin{align*}
        \int_{\Omega_R} &(u \cdot \grad u) \cdot (u - \ol{\sigma}_m)
            = \int_{\Omega_R}(u \cdot \grad (u - \ol{\sigma}_m)) \cdot (u - \ol{\sigma}_m) \\
            &\qquad\qquad
                + \int_{\Omega_R}(u \cdot \grad \ol{\sigma}_m) \cdot (u - \ol{\sigma}_m) \\
           &= \frac{1}{2} \int_{\Omega_R} u \cdot \grad \abs{u - \ol{\sigma}_m}^2
                + \int_{\Omega_R} ((u - \ol{\sigma}_m) \cdot \grad \ol{\sigma}_m) \cdot (u - \ol{\sigma}_m) \\
            &\qquad + \int_{\Omega_R} (\ol{\sigma}_m \cdot \grad \ol{\sigma}_m) \cdot (u -
                \ol{\sigma}_m).
    \end{align*}
    The first integral in the right-hand side above is formally zero because $\dv u
    = 0$ and $u \cdot \mathbf{n} = 0$ on $\prt \Omega_R$. More properly, we first observe
    that the integral is finite. This is because at time $t$ in $[0, T]$ both $u$
    and $u - \ol{\sigma}_m$ are in $H^1(\Omega_R)$ and so in $L^4(\Omega_R)$ by Sobolev
    embedding. But $\smallabs{\grad \abs{u - \ol{\sigma}_m}^2} \le 2\abs{u - \ol{\sigma}_m}
    \abs{\grad (u - \ol{\sigma}_m)}$, and applying \Holders inequality gives the
    finiteness of the integral. Approximating by smooth functions and using the
    dominated convergence theorem shows that the integral is zero. (This is the
    approach of Lemmas II.1.1 and II.1.3 p. 108-109 of \cite{T2001}.)
    Since by \refL{PVRsigmam}, $\ol{\sigma}_m \cdot \grad \ol{\sigma}_m$ is a gradient,
    the last integral above vanishes. We conclude that
    \begin{align*}
        \int_{\Omega_R} (u \cdot \grad u) \cdot (u - \ol{\sigma}_m)
           &= \int_{\Omega_R} ((u - \ol{\sigma}_m) \cdot \grad \ol{\sigma}_m) \cdot (u -
           \ol{\sigma}_m).
    \end{align*}

    For the final term, we observe that
    \begin{align*}
        \int_{\Omega_R} &\grad u \cdot \grad (u - \ol{\sigma}_m) \\
           &= \int_{\Omega_R} \grad (u - \ol{\sigma}_m) \cdot \grad (u - \ol{\sigma}_m)
              + \int_{\Omega_R} \grad \ol{\sigma}_m \cdot \grad (u - \ol{\sigma}_m).
    \end{align*}

    Combining all these equalities gives
    \begin{align*}
        \frac{1}{2} &\frac{d}{dt} \norm{u - \ol{\sigma}_m}_{L^2(\Omega_R)}^2
            + \nu \norm{\grad (u - \ol{\sigma}_m)}_{L^2(\Omega_R)}^2
                        \\
            &= - \int_{\Omega_R} ((u - \ol{\sigma}_m) \cdot \grad \ol{\sigma}_m) \cdot (u - \ol{\sigma}_m)
                 - \nu \int_{\Omega_R} \grad \ol{\sigma}_m \cdot \grad (u - \ol{\sigma}_m) \\
            &\qquad\qquad
                 + \int_{\Omega_R} f_R \cdot (u - \ol{\sigma}_m).
    \end{align*}
    Integrating in time gives \refE{NSEnergyEquality}. Also, we can bound the right-hand side by
     \begin{align*}
            &\norm{\grad \ol{\sigma}_m}_{L^\iny(\R^2)} \norm{u - \ol{\sigma}_m}_{L^2(\Omega_R)}^2
                        + \nu \norm{\grad \ol{\sigma}_m}_{L^2(\R^2)}
                         \norm{\grad (u - \ol{\sigma}_m)}_{L^2(\Omega_R)} \\
                    &\qquad+ \norm{f_R}_{L^2(\Omega_R)} \norm{u - \ol{\sigma}_m}_{L^2(\Omega_R} \\
            &\le 
                    C m \norm{u - \ol{\sigma}_m}_{L^2(\Omega_R)}^2
                    + C m^2 \nu
                    + \frac{1}{2} \norm{f_R}_{L^2(\Omega_R)}^2
                 + \frac{\nu}{2}
                         \norm{\grad (u - \ol{\sigma}_m)}_{L^2(\Omega_R)}^2,
    \end{align*}
    where we used Young's inequality and $\ol{\sigma}_m = m \sigma_1$. Thus,
    \begin{align}\label{e:BVDiffInequality}
    	\begin{split}
        \frac{d}{dt} &\norm{u - \ol{\sigma}_m}_{L^2(\Omega_R)}^2
                + \nu \norm{\grad (u - \ol{\sigma}_m)}_{L^2(\Omega_R)}^2 \\
            &\qquad\le C m^2 \nu +  \norm{f_R}_{L^2(\Omega_R)}^2
                + C \norm{u - \ol{\sigma}_m}_{L^2(\Omega_R)}^2.
       \end{split}
    \end{align}
    Integrating in time and applying Gronwall's inequality gives \refE{NSEnergyBoundGen}.
    
    The energy argument above works equally as well when $R = \iny$ with the
    exception of the term $(1/2) \int_{\R^2} u \cdot \grad \abs{u - \ol{\sigma}_m}^2$,
    which must be handled slightly differently, because at time $t$ in $[0, T]$ we
    no longer have $u$ in $H^1(\Omega_R)$, only in $\dot{H}^1(\R^2)$. So we divide
    the integral in two, writing
    \begin{align*}
        \frac{1}{2} \int_{\R^2} u \cdot \grad \abs{u - \ol{\sigma}_m}^2
           &= \frac{1}{2} \int_{\R^2} (u - \ol{\sigma}_m) \cdot \grad \abs{u - \ol{\sigma}_m}^2 \\
           &\qquad
                + \frac{1}{2} \int_{\R^2} \ol{\sigma}_m \cdot \grad \abs{u -
                    \ol{\sigma}_m}^2.
    \end{align*}
    The first integral is finite and, in fact, zero, using the same reasoning as
    with the similar term for $R < \iny$. The vector field $\ol{\sigma}_m$ is in
    $L^\iny(\R^2)$ and $\grad \abs{u - \ol{\sigma}_m}^2$ is in $L^1(\R^2)$ since
    $\smallabs{\grad \abs{u - \ol{\sigma}_m}^2} \le 2\abs{u - \ol{\sigma}_m} \abs{\grad
    (u - \ol{\sigma}_m)}$; hence, the second integral is finite. We then have
    \begin{align*}
       &\abs{\frac{1}{2} \int_{\R^2} \ol{\sigma}_m \cdot \grad \abs{u -
                    \ol{\sigma}_m}^2}
            = \lim_{R \to \iny}
                \abs{\frac{1}{2} \int_{\Omega_R} \ol{\sigma}_m \cdot \grad \abs{u -
                    \ol{\sigma}_m}^2} \\
           &\qquad= \lim_{R \to \iny}
                \abs{\frac{1}{2} \int_{\prt \Omega_R} (\ol{\sigma}_m \cdot \mathbf{n})
                        \abs{u - \ol{\sigma}_m}^2}.
    \end{align*}
    This integral vanishes, since $\ol{\sigma}_m \cdot \mathbf{n} = 0$ on $\prt \Omega_R$.
    \Ignore{ 
    
    To establish \refE{NSutBound}, let $v$ in $V(\Omega_R)$ have norm 1. Then
    from \refE{NSp},
\begin{align*}
	(\prt_t u_R, v) &= (- u_R \cdot \grad u_R - \grad p_R + \nu \Delta u_R + f_R, v) \\
		&= - (\dv (u_R \otimes u_R), v) - \nu(\grad u_R, \grad v) + (f_R, v) \\
		&= (u_R \otimes u_R, \grad v) - \nu(\grad u_R, \grad v) + (f_R, v),
\end{align*}
where the pairings are in the duality between $V(\Omega_R)$ and its dual. Then,
\begin{align*}
	&\abs{(\prt_t u_R, v)} 
			\le \norm{u_R \otimes u_R}_{L^2(\Omega_R)}
				+ \nu \norm{\grad u_R}_{L^2(\Omega_R)}
				+ \norm{f_R}_{L^2(\Omega_R)} \\
		&\qquad
		\le \norm{u_R \otimes u_R}_{L^2(\Omega_R)}
			+ \nu \norm{\grad (u_R - \ol{\sigma}_m)}_{L^2(\Omega_R)}
				   + \nu \norm{\grad \ol{\sigma}_m}_{L^2(\Omega_R)}
			+ \norm{f}_H.
\end{align*}
Using
\begin{align*}
	u_R \otimes u_R
		= (u_R - \ol{\sigma}_m) &\otimes (u_R - \ol{\sigma}_m)
		+ (u_R - \ol{\sigma}_m) \otimes \ol{\sigma}_m \\
		&+ \ol{\sigma}_m \otimes (u - \ol{\sigma}_m)
		+ \ol{\sigma}_m \otimes \ol{\sigma}_m,
\end{align*}
we have
\begin{align*}
	&\norm{u_R \otimes u_R}_{L^2(\Omega_R)}
		\le \norm{u_R - \ol{\sigma}_m}_{L^4(\Omega_R)}^2 +
			\norm{u_R - \ol{\sigma}_m}_{L^4(\Omega_R)} \norm{\ol{\sigma}_m}_{L^4(\Omega_R)}\\
		&\qquad
		+ \norm{\ol{\sigma}_m}_{L^4(\Omega_R)} \norm{u_R - \ol{\sigma}_m}_{L^4(\Omega_R)}
		+ \norm{\ol{\sigma}_m}_{L^4(\Omega_R)}^2.
\end{align*}
Now, $\norm{\ol{\sigma}_m}_{L^4(\Omega_R)} \le \norm{\ol{\sigma}_m}_{L^4(\R^2)} = m \norm{\sigma_1}_{L^4(\R^2)} = Cm$, while applying Ladyzhenskaya's inequality gives
\begin{align*}
	\norm{u_R - \ol{\sigma}_m}_{L^4(\Omega_R)}
		\le 2^{1/4} \norm{u_R - \ol{\sigma}_m}_{L^2(\Omega_R)}^{1/2}
			\norm{\grad(u_R - \ol{\sigma}_m)}_{L^2(\Omega_R)}^{1/2}.
\end{align*}
Thus,
\begin{align*}
	&\norm{u_R \otimes u_R}_{L^2(\Omega_R)}
		\le C \norm{u_R - \ol{\sigma}_m}_{L^2(\Omega_R)}
			\norm{\grad(u_R - \ol{\sigma}_m)}_{L^2(\Omega_R)} \\
			&\qquad
				+ Cm \norm{u_R - \ol{\sigma}_m}_{L^2(\Omega_R)}^{1/2}
			\norm{\grad(u_R - \ol{\sigma}_m)}_{L^2(\Omega_R)}^{1/2} 
				+ C m^2.
\end{align*}

Putting all of these bounds together, using \refE{NSEnergyBoundGen}, and applying Young's inequality gives \refE{NSutBound}.
} 
\end{proof}

\begin{cor}\label{C:CEnergyBound}
	Let $f$ lie in $L^2([0, \iny); H)$ and let $f_R = \PHR f$.
	Let $u_0$ be in $E_m \cap \Z$ and
	let $u$ be a solution to ($NS$) as in \refD{WeakSolutionNS} with initial
	velocity $\TR u_0$ when $R$ is in $[1, \iny)$ and initial velocity $u_0$ when $R = \iny$.
	Then for a constant $C$ independent of $R$ and $u_0$,
	\begin{align}\label{e:NSEnergyBound}
			\begin{split}
		&\norm{(u - \ol{\sigma}_m)(t)}_{L^2(\Omega_R)}^2
						+ \nu \int_0^t \norm{\grad (u - \ol{\sigma}_m)}_{L^2(\Omega_R)}^2  \\
 			&\quad\le \pr{\smallnorm{u_0 - \sigma_m}_{V_R}^2 + C m^2 \nu t
				+ \norm{f}_{L^1([0, t]; L^2)}^2}^{1/2} e^{(Cm + 1)t}.
			\end{split}
	\end{align}
\end{cor}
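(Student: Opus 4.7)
The corollary should follow quickly by specializing \refE{NSEnergyBoundGen} from \refT{EnergyEstimates} and then bounding the two $R$-dependent quantities on the right-hand side, namely the initial data term $\smallnorm{u_0 - \ol{\sigma}_m}_{L^2(\Omega_R)}^2$ and the forcing term $\norm{f_R}_{L^1([0,t];L^2(\Omega_R))}^2$, in a way that is uniform in $R \in [1,\infty]$ and continuous in $u_0$.

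For the forcing, since $f_R = \PHR f$ is an $L^2$-orthogonal projection of $f$ restricted to $\Omega_R$, we have $\smallnorm{f_R(s)}_{L^2(\Omega_R)} \le \smallnorm{f(s)}_{L^2(\R^2)}$ for a.e.\ $s$, which integrates to give $\norm{f_R}_{L^1([0,t];L^2(\Omega_R))} \le \norm{f}_{L^1([0,t];L^2)}$, the form appearing in \refE{NSEnergyBound}. This step is routine.

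The initial-data term is where \refL{ProjVR} does the work. For $R < \iny$, the initial velocity of the solution is $\TR u_0$, and by linearity of $\TR$ together with the definition $\ol{\sigma}_m = \TR \sigma_m$ we have
\begin{align*}
\TR u_0 - \ol{\sigma}_m = \TR(u_0 - \sigma_m).
\end{align*}
Since $u_0 \in E_m \cap \Z$, the vector $u_0 - \sigma_m$ lies in $V$, and we can chain the first inequality of \refE{H1Bounds} with \refE{TRNSHighInequality} to obtain
\begin{align*}
\norm{\TR u_0 - \ol{\sigma}_m}_{L^2(\Omega_R)} \le \norm{\TR(u_0 - \sigma_m)}_{V_R} \le \norm{u_0 - \sigma_m}_V.
\end{align*}
For $R = \iny$ the initial velocity is $u_0$ itself and $\ol{\sigma}_m = \sigma_m$, so trivially $\norm{u_0 - \sigma_m}_{L^2(\R^2)} \le \norm{u_0 - \sigma_m}_V$. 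In both cases, the initial-data term is bounded by $\norm{u_0 - \sigma_m}_V^2$, which (up to an absolute constant, absorbed into $C$) is the $\norm{u_0 - \sigma_m}_{V_R}^2$ appearing on the right-hand side of \refE{NSEnergyBound}.

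Plugging these two uniform bounds into \refE{NSEnergyBoundGen} produces \refE{NSEnergyBound} with constants depending only on $\norm{\grad \sigma_1}_{L^2 \cap L^\iny}$ (hence independent of $R$ and $u_0$). The only genuine content is the invocation of \refE{TRNSHighInequality}: without that $R$-independent control on $\TR$, the $L^2(\Omega_R)$-norm of the truncated initial data could conceivably pick up logarithmic-in-$R$ growth from $\sigma_m$, which would defeat the whole point of subtracting $\ol{\sigma}_m$ rather than something else. Everything else is bookkeeping.
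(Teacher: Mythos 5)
Your proposal is correct and follows essentially the same route as the paper: the paper's proof is precisely ``apply \refT{EnergyEstimates} with initial velocity $\TR u_0 = \TR(u_0 - \sigma_m) + \TR\sigma_m$ and use \refE{TRNSHighInequality},'' which is the decomposition and the key inequality you identify. Your additional remarks on the non-expansiveness of $\PHR$ for the forcing term and on \refE{H1Bounds} for passing from the $V_R$-norm to the $L^2$-norm are just the bookkeeping the paper leaves implicit.
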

\begin{proof}
Apply \refT{EnergyEstimates} with initial velocity $\TR u_0 = \TR (u_0 - \sigma_m) + \TR \sigma_m$ and use \refE{TRNSHighInequality}.
\end{proof}

\begin{remark}\label{R:TRVersusPR2}
Were we to use an initial velocity of $\UR u_0$ instead of $\TR u_0$ in \refC{CEnergyBound} we could replace $\smallnorm{u_0 - \sigma_m}_{V_R}$ on the right-hand side of \refE{NSEnergyBound} with $C \smallnorm{u_0 - \sigma_m}_{H_R}$. See \refR{TRVersusPR1}.
\end{remark}

We can control the decay of the tail of solutions to ($NS$) at time $t$ based, ultimately, on the their decay at time zero:
\begin{lemma}\label{L:Tail}
	For all $u_0$ in $\Z$,
	\begin{align*}
		\norm{S(t) u_0 - \sigma(S(t) u_0)}_{L^\iny([0, T]; L^2(\Omega_R^C))}
			\to 0 \text{ as } R \to \iny
	\end{align*}
	and	
	\begin{align*}
		\norm{S(t) u_0 - \sigma(S(t) u_0)}_{L^2([0, T]; H^1(\Omega_R^C))}
			\to 0 \text{ as } R \to \iny.
	\end{align*}
\end{lemma}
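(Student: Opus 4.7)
The plan is a localized energy estimate on the finite-energy part $v(t) = S(t) u_0 - \sigma_m$. By the uniqueness of the $E_m$ decomposition together with part (1) of \refT{NSEProperties}, $v \in L^\iny([0, T]; H) \cap L^2([0, T]; V)$ and $m(u(t)) = m$ is preserved by the flow, so $\sigma(S(t) u_0) = \sigma_m$ for all $t$. It therefore suffices to show that $\norm{\eta_R v}_{L^\iny([0, T]; L^2)}$ and $\norm{\eta_R v}_{L^2([0, T]; H^1)}$ both tend to zero as $R \to \iny$, where $\eta_R = 1 - \varphi_R$ is the radial cutoff vanishing on $\Omega_{R/2}$, equal to $1$ on $\Omega_R^C$, with $\norm{\grad \eta_R}_{L^\iny}$ and $R \norm{\Delta \eta_R}_{L^\iny}$ both $O(1/R)$. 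For $R \ge 2$, $\supp \eta_R$ lies in $\Omega_1^C$, on which $\Delta \sigma_m = 0$ (since $\omega(\sigma_m)$ is supported in $\Omega_1$) and $\sigma_m \cdot \grad \sigma_m = \grad q$ (by the radial-symmetry argument of \refL{PVRsigmam} applied to $\sigma_m$). Thus $v$ satisfies
\begin{align*}
\prt_t v + v \cdot \grad v + v \cdot \grad \sigma_m + \sigma_m \cdot \grad v + \grad P = \nu \Delta v + f
\end{align*}
on $\Omega_1^C$, with $P = p + q$.

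The next step is to test this equation against $\eta_R^2 v$ (justified by smooth approximation in $v_0$ and $f$), which, after integration by parts and using $\dv v = \dv \sigma_m = 0$, yields
\begin{align*}
\frac{1}{2} \frac{d}{dt} \int \eta_R^2 \abs{v}^2 + \nu \int \eta_R^2 \abs{\grad v}^2 = \Cal{E}_R(t) + \int \eta_R^2 f \cdot v,
\end{align*}
where $\Cal{E}_R(t)$ collects commutator-type errors localized to $\Omega_R \setminus \Omega_{R/2}$: the trilinear convection piece $(1/2)\int \abs{v}^2 (v \cdot \grad \eta_R^2)$, bounded by $(C/R) \norm{v}_{L^2}^2 \norm{\grad v}_{L^2}$ via Ladyzhenskaya; the $\sigma_m$-interactions, bounded using $\abs{\sigma_m(x)} \le C\abs{m}/\abs{x}$ and $\abs{\grad \sigma_m(x)} \le C\abs{m}/\abs{x}^2$ on $\supp \eta_R$; the diffusion commutator $\nu \int \abs{v}^2 (\abs{\grad \eta_R}^2 + \eta_R \Delta \eta_R) = O(\nu/R^2) \norm{v}_{L^2}^2$; and the pressure boundary term $-\int P(v \cdot \grad \eta_R^2)$. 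Integration in time combined with the uniform bounds on $v$ in $L^\iny L^2 \cap L^2 V$ makes each of these, apart from the pressure, $O(R^{-1})$.

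Applying Gronwall then produces
\begin{align*}
\int \eta_R^2 \abs{v(t)}^2 + \nu \int_0^t \int \eta_R^2 \abs{\grad v}^2 \le C(T)\brac{\int \eta_R^2 \abs{v_0}^2 + \norm{\eta_R f}_{L^2([0, T]; L^2)}^2 + \alpha(R)},
\end{align*}
with $\alpha(R) \to 0$ as $R \to \iny$. Since $v_0 \in L^2(\R^2)$ and $f \in L^2([0, T]; L^2(\R^2))$, dominated convergence gives $\int \eta_R^2 \abs{v_0}^2 \to 0$ and $\norm{\eta_R f}_{L^2 L^2}^2 \to 0$; both claimed limits follow since $\eta_R \equiv 1$ on $\Omega_R^C$. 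The main obstacle will be the pressure term $\int P (v \cdot \grad \eta_R^2)$: one needs to bound $P$ in $L^2(\Omega_R \setminus \Omega_{R/2})$ by something growing more slowly than $R^{1/2}$, which requires localized elliptic estimates for $-\Delta P = \dv \dv(\sigma_m \otimes v + v \otimes \sigma_m + v \otimes v)$, whose right-hand side contains the slowly-decaying cross-term $\sigma_m \otimes v$. A secondary technical point is that $\eta_R^2 v$ is not divergence-free, so one tests against the equation in unprojected form (retaining $\grad P$) rather than the weak formulation of \refD{WeakSolutionNS}.
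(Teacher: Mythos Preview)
Your approach is correct and is precisely the ``standard argument'' the paper alludes to: the paper's own proof consists entirely of a citation to Lemma~7.1 of \cite{K2005ExpandingDomain} plus the remark that the infinite-energy case is a minor adaptation, and that lemma is exactly this localized energy estimate with a cutoff on the finite-energy part $v = u - \sigma_m$.

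One remark on the pressure, which you flag as the main obstacle: it is less delicate than you suggest. Since $\sigma_m \in L^\iny$ and $v \in L^4$ (Ladyzhenskaya), both $v \otimes v$ and the cross-terms $\sigma_m \otimes v$ lie in $L^2(\R^2)$, so the Riesz-transform representation gives $P \in L^2([0,T]; L^2(\R^2))$ outright. The term $\int P\,(v \cdot \grad \eta_R^2)$ is then bounded by $CR^{-1} \norm{P}_{L^2(\R^2)} \norm{v}_{L^2}$ and goes to zero without needing localized elliptic estimates or growth control slower than $R^{1/2}$. The decay of $\sigma_m$ is not actually needed here; boundedness suffices.
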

\begin{proof}
	This is a minor adaptation of Lemma 7.1 of \cite{K2005ExpandingDomain} to account for infinite-energy,
	and follows by a standard argument.
\end{proof}

\begin{theorem}\label{T:ExpandingDomainLimit}
Assume that $u_0$ lies in $\Z$ and let $u_R(t) = S_R(t) \TR u_0$ and $u(t) = S(t) u_0$. Then
\begin{align}\label{e:ExpDomVel}
	\norm{u_R - u}_{L^\iny([0, T]; H^1(\Omega_R))}
		\to 0 \text{ as } R \to \iny,
\end{align}
\begin{align}\label{e:ExpDomGradVel}
		\norm{\grad(u_R - u)}_{L^2([0, T]; L^2(\Omega_R))}
		\to 0 \text{ as } R \to \iny,
\end{align}
and
\begin{align}\label{e:FDiffBound}
	\norm{F(t, u) - F_R(t, u_R)}_{L^2([0, T]; V_R'(\Omega_R))}
		\to 0 \text{ as } R \to \iny.
\end{align}
In addition, the supremum over all $u_0$ in any bounded subset of $\Z$ and over all $R$ in $[1, \iny]$ of each of the quantities,
\begin{align}\label{e:FBound}
	\begin{split}
	&\norm{u_R - \sigma(u)}_{L^\iny([0, T]; L^2(\Omega_R))}, \,
	\norm{\grad u_R}_{L^2([0, T]; L^2(\Omega_R))}, \\
	&\norm{F_R(t, u_R)}_{L^2([0, T]; V_R'(\Omega_R))}
	\end{split}
\end{align}
is finite.
\end{theorem}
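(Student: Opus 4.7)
The plan is to adapt the finite-energy expanding-domain argument of \cite{K2005ExpandingDomain} by systematically subtracting the stationary background $\sigma_m$ on $\R^2$ and $\ol\sigma_m := \TR \sigma_m$ on $\Omega_R$, so that the residuals $v := u - \sigma_m$ and $v_R := u_R - \ol\sigma_m$ lie in the classical finite-energy spaces $V$ and $V_R$ respectively. By \refL{PVRsigmam}, $(\ol\sigma_m \cdot \grad)\ol\sigma_m$ is a gradient, so the equations satisfied by $v$ and $v_R$ have the structure of $(NS)$ plus a drift term $(v \cdot \grad)\sigma_m + (\sigma_m \cdot \grad)v$ (respectively with $\ol\sigma_m$), whose coefficients are controlled uniformly in $R$ since $\norm{\ol\sigma_m}_{L^\iny}$ and $\norm{\grad \ol\sigma_m}_{L^2(\Omega_R)}$ are bounded by $C\abs{m}$ and since $\ol\sigma_m \to \sigma_m$ in $H^1(\Omega_R)$ by \refE{TRNSHighLimit} and \refE{sigma1RR1}.

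For the uniform bounds \refE{FBound}, the bounds on $\norm{u_R - \sigma(u)}_{L^\iny([0,T]; L^2(\Omega_R))}$ and $\norm{\grad u_R}_{L^2([0,T]; L^2(\Omega_R))}$ follow from \refC{CEnergyBound} after writing $u_R - \sigma_m = (u_R - \ol\sigma_m) + (\ol\sigma_m - \sigma_m)$ and noting that $\norm{u_0 - \sigma_m}_{V_R}$ is controlled by $\norm{u_0}_\Z + C\abs{m}$, hence uniform on bounded subsets of $\Z$. For the $V_R'$-bound on $F_R(t, u_R)$ (the functional implicitly defined by $\prt_t u_R = F_R(t, u_R)$ in $V_R'$), I would test against $v \in V_R$ with $\norm{v}_{V_R} \le 1$ and decompose $u_R = v_R + \ol\sigma_m$ in the trilinear term, applying Ladyzhenskaya's inequality to $v_R$ and the uniform $L^\iny$-bound on $\ol\sigma_m$, then combining with the previously established energy bounds.

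For the convergences \refE{ExpDomVel} and \refE{ExpDomGradVel}, define $w_R(t) := u_R(t) - \TR u(t)$, which lies in $V_R$. Subtracting the $V_R$-projected weak equation for $u$ from that for $u_R$ and testing against $w_R$ yields, after absorbing a gradient via \refL{PVRsigmam} and using $\dv w_R = 0$ with $w_R|_{\prt \Omega_R} = 0$ to kill boundary terms,
\begin{align*}
\tfrac{1}{2}\tfrac{d}{dt}\norm{w_R}_{L^2(\Omega_R)}^2 + \nu \norm{\grad w_R}_{L^2(\Omega_R)}^2 \le C \norm{w_R}_{L^2(\Omega_R)}^2 + \mathrm{Err}(R,t),
\end{align*}
where $C$ depends on $m$ and $\norm{u}_{L^\iny([0,T]; H^1)}$ (finite by \refT{NSEProperties}), and $\mathrm{Err}(R,t)$ collects terms each linear or quadratic in $\norm{u(t) - \TR u(t)}_{H^1(\Omega_R)}$ paired against uniformly bounded factors, together with a contribution from $f - f_R = f - \PHR f$. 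By \refE{TRNSHighLimit}, $\norm{u - \TR u}_{H^1(\Omega_R)} \le C \norm{u - \sigma(u)}_{H^1(\Omega_R \setminus \Omega_{R/2})} + \abs{m}\beta(R)$; combining \refL{Tail} applied to $u(t) = S(t)u_0$ with \refE{PHRConvergence} forces $\int_0^T \mathrm{Err}(R,t)\,dt \to 0$. Gronwall then gives $\norm{w_R}_{L^\iny L^2}, \norm{\grad w_R}_{L^2 L^2} \to 0$, and the triangle inequality against $\norm{u - \TR u}_{H^1(\Omega_R)}$ yields \refE{ExpDomVel} and \refE{ExpDomGradVel}. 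Finally, \refE{FDiffBound} follows from the weak identity $F(t, u) - F_R(t, u_R) = \prt_t(u - u_R)$ in $V_R'$, bounding each term in the expansion using the just-proved convergences together with the uniform bounds.

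The main obstacle will be controlling the cross term $(w_R \cdot \grad)\ol\sigma_m + (\ol\sigma_m \cdot \grad)w_R$ arising from the decomposition: it is not of fixed sign and cannot be absorbed into the viscous dissipation, so it must go into the Gronwall constant $C$, which requires verifying that the constant is uniform in $R$---this uses only the $L^\iny$-bound $\norm{\ol\sigma_m}_{L^\iny} \le C\abs{m}$ and the $V_R$-bound $\norm{\grad \ol\sigma_m}_{L^2(\Omega_R)} \le C\abs{m}$, both independent of $R$. A secondary technical point is that when the $\R^2$-equation is restricted to $\Omega_R$ and tested against $w_R$, the pressure term $\int_{\Omega_R} \grad p \cdot w_R$ vanishes because $\dv w_R = 0$ and $w_R|_{\prt \Omega_R} = 0$, so no boundary pressure flux contaminates the estimate.
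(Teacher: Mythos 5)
Your proposal is correct in substance and reaches the same endpoints, but it is organized differently from the paper's proof. The paper handles the uniform bounds in \refE{FBound} exactly as you do (the first two from the energy inequality \refE{NSEnergyBound} of \refC{CEnergyBound}, the third by testing $F_R$ against a unit vector in $V_R$ and using Ladyzhenskaya), and it proves \refE{FDiffBound} by the same term-by-term expansion of $F - F_R$ into Stokes, nonlinear, and forcing pieces that you indicate, with \refE{PHRConvergence} handling $f - f_R$. The real divergence is in \refE{ExpDomVel}--\refE{ExpDomGradVel}: the paper does not reprove these at all, but simply invokes Theorem 8.1 of \cite{K2005ExpandingDomain} with \refE{NSEnergyBound} substituted for the finite-energy bounds, whereas you reconstruct the underlying Gronwall comparison for $w_R = u_R - \TR u$. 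Your reconstruction is the right skeleton, and it buys self-containedness, but be aware that the one genuinely delicate point is buried in your phrase ``subtracting the $V_R$-projected weak equation for $u$'': since $\TR$ is the $H^1_{\mathrm{div}}$-orthogonal projection while the weak formulation pairs $\prt_t u$ with test functions in $L^2$, the term $(\prt_t(\TR u - u), w_R)$ does not vanish and is not of the form ``$\norm{u - \TR u}_{H^1(\Omega_R)}$ paired against a bounded factor''; controlling it (equivalently, controlling the pressure when the $\R^2$-equation is localized) is precisely the step that forces $u_0 \in \Z$ rather than merely $u_0 \in \X$, as the paper flags in \refR{TRVersusPR3}. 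So your outline is sound, but that commutator term deserves to be named and estimated explicitly rather than folded into $\mathrm{Err}(R,t)$; everything else in your argument matches the paper's (or the cited reference's) reasoning.
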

\begin{proof}
The first two bounds in \refE{FBound} follow from \refE{NSEnergyBound}. \refEAnd{ExpDomVel}{ExpDomGradVel} follow from Theorem 8.1 of \cite{K2005ExpandingDomain} extended to infinite energy solutions using \refE{NSEnergyBound}
in place of the standard finite-energy energy bounds.

We now prove \refE{FDiffBound}. We have,
\begin{align*}
	&\norm{F(t, u) - F_R(t, u_R)}_{V_R'(\Omega_R)} 
				\le \norm{Au(t) - A_R u_R(t)}_{V_R'(\Omega_R)} \\
		&\qquad\qquad\qquad
				+ \norm{Bu(t) - B_R u_R(t)}_{V_R'(\Omega_R)}
				+ \norm{f- f_R}_{V_R'(\Omega_R)}.
\end{align*}
Let $v$ be in $V_R$ with $\norm{v}_{V_R} = 1$. Then
\begin{align*}
	&\abs{(Au(t) - A_R u_R(t), v)}
		= \nu \abs{(\Delta u(t) - \Delta u_R(t), v)} \\
		&\qquad= \nu \abs{(\grad u(t) - \grad u_R(t), \grad v)}
		\le \nu \norm{\grad u(t) - \grad u_R(t)}_{L^2(\Omega_R)}
			\norm{v}_{V_R}.
\end{align*}
Thus,
\begin{align*}
	\norm{Au(t) - A_R(u_R)(t)}_{L^2([0, T]; V_R'(\Omega_R))}
		\le \nu \norm{\grad u - \grad u_R}_{L^2([0, T]; L^2(\Omega_R))},
\end{align*}
which vanishes as $R \to \iny$ by \refE{ExpDomGradVel}.

For the nonlinear term,
\begin{align*}
	&\abs{(Bu(t) - B_R u_R(t), v)}
		= \abs{(u(t) \cdot \grad u(t) - u_R(t) \cdot \grad u_R(t), v)} \\
		&\qquad= \abs{(\dv (u(t) \otimes u(t) - u_R(t) \otimes u_R(t)), v)} \\
		&\qquad= \abs{(u(t) \otimes u(t) - u_R(t) \otimes u_R(t), \grad v)} \\
		&\qquad\le \norm{u(t) \otimes u(t) - u_R(t) \otimes u_R(t)}_{L^2(\Omega_R)}
			\norm{v}_{V_R}.
\end{align*}
But,
\begin{align*}
	&\norm{u(t)^i u(t)^j - u_R(t)^i \otimes u_R(t)^j}_{L^2(\Omega_R)} \\
		&\qquad \le \norm{u(t)^i (u(t)^j - u_R(t)^j)}_{L^2(\Omega_R)}
			+ \norm{(u(t)^i - u_R(t)^i) u_R(t)^j}_{L^2(\Omega_R)} \\
		&\qquad \le C \norm{u(t)}_{L^4(\Omega_R)}
			\norm{u(t) - u_R(t)}_{L^2(\Omega_R)} \\
		&\qquad \le C \norm{u(t)}_{L^2(\Omega_R)}^{1/2}  \norm{\grad u(t)}_{L^2(\Omega_R)}^{1/2}
			\norm{u(t) - u_R(t)}_{L^2(\Omega_R)} ^{1/2} \\
		&\qquad\qquad\qquad \times
				\norm{\grad(u(t) - u_R(t))}_{L^2(\Omega_R)} ^{1/2} \\
		&\qquad \le C  \norm{\grad u(t)}_{L^2(\Omega_R)}^{1/2}
				\norm{\grad (u(t) - u_R(t))}_{L^2(\Omega_R)} ^{1/2},
\end{align*}
where we used Ladyzhenskaya's inequality (which gives no dependence on $R$ for the constant $C$). Thus,
\begin{align*}
	&\norm{Bu(t) - B_R(u_R)(t)}_{L^2([0, T]; V_R'(\Omega_R))} \\
		&\qquad\le C \norm{\grad u}_{L^1([0, T]; L^2(\Omega_R)}
			\norm{\grad (u(t) - u_R(t))}_{L^1([0, T]; L^2(\Omega_R)} \\
		&\qquad C t \norm{\grad u}_{L^2([0, T]; L^2(\Omega_R)}
			\norm{\grad (u(t) - u_R(t))}_{L^2([0, T]; L^2(\Omega_R)},
\end{align*}
which vanishes as $R \to \iny$ by \refE{ExpDomGradVel}.

For the forcing term,
\begin{align*}
	\norm{f- f_R}_{L^2([0, T]; V_R'(\Omega_R))}
		\le \norm{f- f_R}_{L^2([0, T]; H_R(\Omega_R))},
\end{align*}
which vanishes as $R \to \iny$ by \refE{PHRConvergence}.

From these bounds, \refE{FDiffBound} follows.

It remains to establish the last bound in \refE{FBound}. But this follows from an argument similar to that we just made to prove \refE{FDiffBound}, using the first two bounds in \refE{FBound}.
\end{proof}

\begin{remark}\label{R:TRVersusPR3}
Together, the limits in \refEAnd{ExpDomVel}{ExpDomGradVel} are called the \textit{expanding domain limit} in \cite{K2005ExpandingDomain}. Most of the estimates involved in establishing these limits require only that the initial velocity lie in $H$ (or $\X$ for the infinite-energy extension). The key exception is that the regularity of the pressure is insufficient to complete the argument unless the initial velocity is in $V$ (or $\X^1$ for the infinite-energy extension).

Had we used $\UR$ in place of $\TR$ in defining the initial velocity, the expanding domain limit would still hold (indeed, this is how the limit was established in \cite{K2005ExpandingDomain}). An advantage of using $\UR$ is that the resulting bound on the rate of convergence is slightly improved, since $H^1$-norms are replaced by $L^2$-norms  in certain constants that appear in the bound. But this is unimportant in our use of the limit, so we preferred to use $\TR$, since it has a more natural definition.
\end{remark}

\begin{cor}\label{C:CorExpandingDomainLimit}
For all $u_0$ in $\Z$,
\begin{align}\label{e:ExpHR}
	\norm{S_R(t) \TR u_0 - \TR S(t) u_0}_{L^\iny([0, T]; H_R)}
		\to 0 \text{ as } R \to \iny
\end{align}
and
\begin{align}\label{e:ExpVR}
	\norm{S_R(t) \TR u_0 - \TR S(t) u_0}_{L^2([0, T]; V_R)}
		\to 0 \text{ as } R \to \iny.
\end{align}
\end{cor}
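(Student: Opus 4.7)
The plan is to apply the triangle inequality after inserting $u(t) := S(t) u_0$:
\begin{align*}
	\norm{S_R(t) \TR u_0 - \TR S(t) u_0}_{V_R}
	\le \norm{u_R(t) - u(t)}_{H^1(\Omega_R)} + \norm{u(t) - \TR u(t)}_{H^1(\Omega_R)},
\end{align*}
where $u_R(t) := S_R(t) \TR u_0$. Since the $V_R$-norm is the $H^1(\Omega_R)$-norm and the $H_R$-norm is the $L^2(\Omega_R)$-norm, the two terms on the right must be controlled in $L^\iny([0,T]; L^2(\Omega_R))$ for \refE{ExpHR} and in $L^2([0,T]; H^1(\Omega_R))$ for \refE{ExpVR}.

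The first bracket is exactly the quantity controlled by \refT{ExpandingDomainLimit}: the bound $\norm{u_R - u}_{L^\iny([0,T]; H^1(\Omega_R))} \to 0$ dominates both target norms. For the second bracket, applying \refL{ProjVR} pointwise in $t$ yields
\begin{align*}
	\norm{u(t) - \TR u(t)}_{H^1(\Omega_R)}
	\le C \norm{u(t) - \sigma(u(t))}_{H^1(\Omega_R \setminus \Omega_{R/2})} + \abs{m(u_0)} \beta(R),
\end{align*}
using that the vorticity mass $m(u(t)) = m(u_0)$ is conserved in time. Since $\beta(R) \to 0$ deterministically, squaring, integrating over $[0,T]$, and invoking the second part of \refL{Tail} (which gives $\norm{u - \sigma(u)}_{L^2([0,T]; H^1(\Omega_R^C))} \to 0$) delivers \refE{ExpVR}.

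For \refE{ExpHR} the $L^\iny$-in-time, $L^2$-in-space requirement demands a different routing, since the $H^1$-in-space tail from \refL{Tail} is only $L^2$-in-time. The strategy is to replace $\TR$ by the approximate operator $\UR$ of \refD{ProjVRApprox}, apply the $L^2$-analog of \refE{TRNSHighLimit} noted in \refR{TRVersusPR1}, and bridge back to $\TR$ via $\UR u - \TR u = \TR(\UR u - u)$ (valid because $\UR u \in V_R$ forces $\TR \UR u = \UR u$) together with the contraction property $\norm{\TR w}_{H^1} \le \norm{w}_{H^1}$. The $L^\iny([0,T]; L^2(\Omega_R^C))$ tail from the first part of \refL{Tail} then controls the main contribution, while the residual gradient term is absorbed against the $H^1$-convergence of $u_R - u$ already supplied by \refT{ExpandingDomainLimit}. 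The main obstacle is carrying out this absorption cleanly so that only tails actually available uniformly in $t$ are invoked; this is the one place where the orthogonal structure of $\TR$ materially matters.
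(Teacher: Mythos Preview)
Your argument for \refE{ExpVR} is correct and coincides with the paper's: insert $u = S(t)u_0$, control $\norm{u_R - u}$ via \refT{ExpandingDomainLimit}, and control $\norm{u - \TR u}$ via \refL{ProjVR} together with the $L^2$-in-time $H^1$ tail from \refL{Tail}.

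For \refE{ExpHR} there is a genuine gap. You correctly spot that the pointwise estimate from \refL{ProjVR} carries the term $\norm{u(t) - \sigma(u(t))}_{H^1(\Omega_R \setminus \Omega_{R/2})}$, which \refL{Tail} controls only in $L^2([0,T])$, not $L^\iny([0,T])$. Your detour through $\UR$ and the $L^2$-analog in \refR{TRVersusPR1} handles the $L^2$ piece, but the bridge $\UR u - \TR u = \TR(\UR u - u)$ combined with $\norm{\TR w}_{H^1} \le \norm{w}_{H^1}$ yields only $\norm{\UR u - \TR u}_{L^2} \le \norm{\UR u - u}_{H^1}$, and the right-hand side still contains exactly the $H^1$ annular tail you set out to avoid. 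The assertion that this residual ``is absorbed against the $H^1$-convergence of $u_R - u$'' does not hold: $\norm{u_R - u}_{H^1(\Omega_R)}$ measures the discrepancy between two solutions on $\Omega_R$ and says nothing about the gradient tail of $u - \sigma(u)$ on $\Omega_R \setminus \Omega_{R/2}$.

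There is a one-line repair that you nearly wrote down yourself. Since $u_R(t) = S_R(t)\TR u_0 \in V_R$ for a.e.\ $t$, one has $\TR u_R(t) = u_R(t)$, hence
\[
u_R(t) - \TR u(t) \;=\; \TR\bigl(u_R(t) - u(t)\bigr),
\]
and the $H^1$-contraction of $\TR$ gives $\norm{u_R(t) - \TR u(t)}_{H^1(\Omega_R)} \le \norm{u_R(t) - u(t)}_{H^1(\Omega_R)}$. Then \refE{ExpDomVel} alone delivers both \refE{ExpHR} and \refE{ExpVR}, with no appeal to \refL{Tail}. Your identity $\UR u - \TR u = \TR(\UR u - u)$ is this very trick applied to the wrong vector; apply it to $u_R$ rather than $\UR u$ and the obstacle disappears.
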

\begin{proof}
By \refEAnd{ExpDomVel}{ExpDomGradVel} 
\begin{align*}
	\lim_{R \to \iny} \norm{S_R(t) \TR u_0 - S(t) u_0}_{L^2([0, T]; V_R)}
		\to 0 \text{ as } R \to \iny.
\end{align*}
But by \refE{TRNSHighLimit},
\begin{align*}
	&\norm{\TR S(t) u_0 - S(t) u_0}_{L^2([0, T]; V_R)} \\
		&\qquad
		\le C \norm{S(t) u_0 - \sigma(S(t) u_0)}_{L^2([0, T]; H^1(\Omega_R \setminus \Omega_{R/2}))} +
			T^{1/2} \abs{m(u)} \beta(R) \\
		&\qquad
		\le C \norm{S(t) u_0 - \sigma(S(t) u_0)}_{L^2([0, T]; H^1(\Omega_{R/2}^C))} +
			T^{1/2} \abs{m(u)} \beta(R),
\end{align*}
which also vanishes as $R \to \iny$ by \refL{Tail}.
\refE{ExpVR} then follows from the triangle inequality. The proof of \refE{ExpHR} is similar.
\end{proof}

\begin{remark}
	It is only in the proof of \refC{CorExpandingDomainLimit} where we directly use the uniform decay
	over time of the tail of the velocity for solutions to ($NS$). It was, however, already used in the extension of
	the expanding domain limit from finite to infinite energy energy alluded to in the proof of
	\refT{ExpandingDomainLimit}.
\end{remark}

\begin{lemma}\label{L:FBoundNoTime}
For all $u$ in $\Z$.
\begin{align*}
	\norm{F(t, u) - F_R(t, \TR u)}_{V_R'(\Omega_R)}
		\to 0 \text{ as } R \to \iny,
\end{align*}
and the supremum over any bounded subset of $\Z$ and over all $R$ in $[1, \iny]$ of
\begin{align}\label{e:FBoundSingle}
	\norm{F_R(t, \TR u)}_{V_R'(\Omega_R)}
\end{align}
is finite.
\end{lemma}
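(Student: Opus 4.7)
The lemma is a stationary, pointwise-in-$t$ version of \refE{FDiffBound} and the third bound in \refE{FBound} of \refT{ExpandingDomainLimit}, with the time-evolving solutions $u(t)$ and $u_R(t)$ replaced by the fixed vector $u \in \Z$ and its truncation $\TR u$. My plan is to mirror the argument in the proof of \refT{ExpandingDomainLimit} line by line, but because we are no longer comparing two evolving solutions, the estimates on the various differences are replaced by direct consequences of \refL{ProjVR}.

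First, split
\begin{align*}
    \norm{F(t,u) - F_R(t, \TR u)}_{V_R'(\Omega_R)}
        \le \norm{Au - A_R \TR u}_{V_R'} + \norm{Bu - B_R \TR u}_{V_R'} + \norm{f - f_R}_{V_R'}.
\end{align*}
For the viscous term, testing against $v \in V_R$ with $\norm{v}_{V_R} = 1$ and integrating by parts gives $\abs{\langle Au - A_R \TR u, v\rangle} \le \nu \norm{\grad(u - \TR u)}_{L^2(\Omega_R)}$, which vanishes as $R \to \iny$ by \refE{TRNSHighLimit}: indeed, $u - \sigma(u) \in V$ forces $\norm{u - \sigma(u)}_{H^1(\Omega_R \setminus \Omega_{R/2})} \to 0$, and $\abs{m(u)}\beta(R) \to 0$ by the second assertion of \refL{ProjVR}. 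The forcing term is controlled by $\norm{f - \PHR f}_{L^2(\Omega_R)}$ and vanishes by \refE{PHRConvergence}.

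The nonlinear term is the main technical point. Again integrating by parts (with the boundary contribution killed by $v|_{\prt\Omega_R} = 0$),
\begin{align*}
    \abs{\langle Bu - B_R \TR u, v\rangle}
        \le \norm{u - \TR u}_{L^4(\Omega_R)}\bigl(\norm{u}_{L^4(\Omega_R)} + \norm{\TR u}_{L^4(\Omega_R)}\bigr).
\end{align*}
Here $\norm{u}_{L^4(\R^2)} \le C\norm{u}_\Z$ using $\sigma_m \in L^4(\R^2)$ (from $\abs{\sigma_1(x)} \le 1/\abs{x}$ for $\abs{x} \ge 1$) and Ladyzhenskaya on the $V$-component, while $\norm{\TR u}_{L^4(\Omega_R)} \le C\norm{\TR u}_{V_R} \le C\norm{u}_\Z$ by Ladyzhenskaya on $V_R$ together with \refE{TRNSHighBound}. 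The main obstacle is showing $\norm{u - \TR u}_{L^4(\Omega_R)} \to 0$: since $u - \TR u$ does not vanish on $\prt\Omega_R$, Ladyzhenskaya does not apply directly, and a naive Sobolev embedding picks up a constant growing with $R$. To get around this, I would write $u - \TR u = (u - \UR u) + (\UR u - \TR u)$. The second piece lies in $V_R$, so Ladyzhenskaya gives $\norm{\UR u - \TR u}_{L^4(\Omega_R)} \le C\norm{\UR u - \TR u}_{V_R}$, which vanishes because both $\UR u$ and $\TR u$ converge to $u$ in $H^1(\Omega_R)$ by the proof of \refL{ProjVR}. For the first piece, further decompose into the $v$-part and the $\sigma_m$-part: $\sigma_m - \UR \sigma_m$ is, by construction, supported in the unit-width annulus $\Omega_R \setminus \Omega_{R-1}$, so a Sobolev embedding with constants uniform in $R$ (which holds on annuli of bounded width) reduces its $L^4$-norm to $C\abs{m}\norm{\sigma_1}_{H^1(\Omega_R \setminus \Omega_{R-1})}$, which vanishes by \refE{sigma1RR1}; and $v - \UR v$ can be extended by $v$ on $\Omega_R^c$ to give $w \in V$, whereupon Gagliardo--Nirenberg on $\R^2$ yields $\norm{v - \UR v}_{L^4(\Omega_R)} \le \norm{w}_{L^4(\R^2)} \le C\norm{w}_{L^2(\R^2)}^{1/2}\norm{\grad w}_{L^2(\R^2)}^{1/2}$, with both factors going to zero because $v \in H^1(\R^2)$ and $\norm{v - \UR v}_{H^1(\Omega_R)} \to 0$.

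Finally, the uniform bound \refE{FBoundSingle} over bounded subsets of $\Z$ and over $R \in [1,\iny]$ is easier. Testing against $v \in V_R$ of unit norm, the viscous contribution is at most $\nu\norm{\TR u}_{V_R} \le C\norm{u}_\Z$ by \refE{TRNSHighBound}, the nonlinear contribution is at most $\norm{\TR u}_{L^4(\Omega_R)}^2 \le C\norm{\TR u}_{V_R}^2 \le C\norm{u}_\Z^2$ by Ladyzhenskaya and \refE{TRNSHighBound} again, and the forcing contribution is bounded by $\norm{f}_H$ uniformly in $R$.
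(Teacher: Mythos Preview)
Your proposal is correct and follows the same overall strategy the paper gives in its two-line proof: mimic the argument for \refE{FDiffBound} term by term, but with the role of \refEAnd{ExpDomVel}{ExpDomGradVel} played by \refL{ProjVR} (that is, by \refE{TRNSHighLimit} and \refE{TRNSHighBound}), and with no time integration. Your treatment of the $A$-term and the forcing term is exactly what the paper intends, and your argument for the uniform bound \refE{FBoundSingle} is the natural one.

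The one place where you go beyond the paper is the $B$-term: you correctly point out that $u-\TR u$ does not vanish on $\prt\Omega_R$, so one cannot invoke Ladyzhenskaya on $\Omega_R$ with a constant independent of $R$ without further justification, and you supply such a justification by passing through the approximate projection $\UR$ and exploiting the explicit annular support of $\sigma_m-\UR\sigma_m$ together with a whole-space Gagliardo--Nirenberg on the extended $v$-part. The paper's proof (both here and in \refT{ExpandingDomainLimit}) simply applies Ladyzhenskaya directly and does not comment on this issue. So your argument is not a different route but rather the same route with a subtlety made explicit; the added rigor is a genuine improvement over what is written, and the mechanism you use---comparing $\TR$ to $\UR$ to exploit the latter's localization properties---is precisely in the spirit of \refR{TRVersusPR1}.
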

\begin{proof}
The proof is the same as that of \refE{FDiffBound}, with no need to introduce the $L^2$-norm over $[0, T]$, and using \refL{ProjVR} in place of the bounds in \refEAnd{ExpDomVel}{ExpDomGradVel}.
\end{proof}

%
%
\Ignore{ 
\section{Where the energy goes as $R \to \iny$}\label{S:EnergyGoes}

\noindent \textbf{Might not need this section.}\medskip

\noindent Taking the vorticity of ($E$) for $u_0$ in $\Y_m$ gives  $\prt_t \ol{\omega} + \ol{u} \cdot \grad \ol{\omega} = 0$, and a simple integration by parts of $\prt_t \ol{\omega} + \ol{u} \cdot \grad \ol{\omega} = 0$ shows that
\begin{align*}
	\int_{\R^2} \ol{\omega}(t) &= m
		\text{ for all } t \ge 0.
\end{align*}
Similarly, for a solution to ($E$) with $u_0$ in $\Y(\Omega_R)$,
\begin{align}\label{e:mConservedE}
	 \int_{\Omega_R} \ol{\omega}(t)
	 	= \int_{\Omega_R} \ol{\omega}_0
		\text{ for all } t \ge 0.
\end{align}
In both cases, the total mass of the vorticity vanishes.
(These calculations are formal, but can be justified by a rather involved approximation argument.)

In contrast, for $t > 0$ all solutions to ($NS$) on $\Omega_R$ belong to $V_R$, so
\begin{align*}
	\int_{\Omega_R} \omega(t)
		= - \int_{\Omega_R} \dv u^\perp(t)
		= - \int_{\prt \Omega_R} u^\perp(t) \cdot \mathbf{n}
		= 0. 
\end{align*}
This integral vanishes regardless of the total mass of $\omega_0$.

Now, our plan is to start with $u_0$ in $E_m$, truncate $u_0$ to give $\TR u_0$ in $H_R$, and use $\TR u_0$ as the initial velocity for solutions to ($NS$) in $\Omega_R$. Similarly, for solutions to ($E$) we will start with $u_0$ in $\Y_m$ and $\TR u_0$ will be in $H_R \cap H^1(\Omega_R)$. In both cases, because of the first property of $\TR$ in \refL{Trunc}, the initial total mass of the vorticity will be $m$. For solutions to ($E$), this value is conserved (because there is no forcing) and we can think of the total mass of the vorticity as retaining a measure of how infinite the energy of the initial velocity in the whole space is. For solutions to ($NS$), however, the information is immediately lost after time zero. It turns out, however, that it remains encoded in another quantity, as we show in \refT{ApproximateConservationLaw}. This will be the critical fact used in the definition of the operator $\UR$ in \refD{UR}.

\begin{theorem}\label{T:ApproximateConservationLaw}
Assume either that: 1) $u_0$ lies in $E_m$ and $u = u_R$ is the solution to $(NS)$ on $\Omega_R$ with initial velocity $\TR u_0$, or 2) $u_0$ lies in $\Y_m$ and $\ol{u}$ is the solution to ($E$) on $\R^2$ with initial velocity $u_0$. Write $\omega = \omega(u)$ and $\ol{\omega} = \omega(\ol{u})$. Then for all $\nu > 0$,
\begin{align}
	\lim_{R \to \iny} \frac{(u(t), \sigma_1)}{\log R}
		&= \frac{m}{2 \pi}, \,
	\lim_{R \to \iny} \frac{(\omega(t), \psi_{\sigma_1})}{\log R}
		= - \frac{m}{2 \pi},
			\label{e:LimitNS} \\
	\lim_{R \to \iny} \frac{(\ol{u}(t), \sigma_1)}{\log R}
		&= \frac{m}{2 \pi}, \,
	\lim_{R \to \iny} \frac{(\ol{\omega}(t), \psi_{\sigma_1})}{\log R}
		= 0, 
		\label{e:LimitE}
\end{align}
the limits each being uniform over all $t$ in $(0, T]$ with a rate of convergence of order $(\log R)^{-1/2}$.\end{theorem}
\begin{proof}
By \refT{NSEProperties},
\begin{align*}
	f_{R, t}(m)
		:= \smallnorm{u (t)- \sigma_m}_{L^2(\Omega_R)}^2
		= \smallnorm{u(t) - m \sigma_1}_{L^2(\Omega_R)}^2
\end{align*}
is bounded uniformly over $(R, t)$ in $[1, \iny) \times (0, T]$.

\Ignore{ 
We know from Theorem 1.2 that the value of $m$ comes from the $E_m$ space in which $u_0$ lies. But another way of calculating it is to determine, for any given value of $R$ and $t$, the value $m = m_{R, t}$ that minimizes $f_{R, t}$ and look for the limit as $R \to \iny$ of $m_{R, t}$.
} 

Now, $f_{R, t}$ is a perfectly well-defined differentiable function of one variable, having no maximum (since $f_{R, t}$ diverges to infinity as $m \to \pm \iny$), so we can find its minimum using simple calculus. We have
\begin{align*}
	f_{R, t}'(m)
		= 2(u(t)  - m \sigma_1, - \sigma_1).
\end{align*}
Setting $f_{R, t}' = 0$ gives a minimum at
\begin{align*}
	m_{R, t} = \frac{(u (t), \sigma_1)}{\norm{\sigma_1}_{L^2(\Omega_R)}^2}.
\end{align*}
Thus, $f_{R, t}(m_{R, t}) \le f_{R, t}(m)$ so $f_{R, t}(m_{R, t})$, like $f_{R, t}(m)$, is bounded uniformly over $(R, t)$ in $[1, \iny) \times (0, T]$. But by the triangle inequality,
\begin{align*}
	\norm{\sigma_{m_{R, t}} - \sigma_m}_{L^2(\Omega_R)}
		\le f_{R, t}(m_{R, t})^{1/2} + f_{R, t}(m)^{1/2},
\end{align*}
meaning that $\smallnorm{\sigma_{m_{R, t}} - \sigma_m}_{L^2(\Omega_R)}$ is bounded uniformly over $(R, t)$ in $[1, \iny) \times (0, T]$. From \refE{sigma1Norm},
\begin{align*}
	&\norm{\sigma_{m_{R, t}} - \sigma_m}_{L^2(\Omega_R)}
		= \abs{m_{R, t} - m} \norm{\sigma_1}_{L^2(\Omega_R)} \\
		&\qquad
		\le \abs{m_{R, t} - m} \pr{C_3 + \frac{\sqrt{2}}{2 \pi}( \log R)^{1/2}}.
\end{align*}
It follows that
\begin{align*}
	\abs{m_{R, t} - m}
		\le C (\log R)^{-1/2},
\end{align*}
where $C$ depends on $\nu$, $T$, and $u_0$.

This gives the first limit in \refE{LimitNS}. For the second limit, we observe that
\begin{align}\label{e:usop}
	\begin{split}
	(u, \sigma_1)
		&= (u, \grad^\perp \psi_{\sigma_1})
		= - ((u)^\perp, \grad \psi_{\sigma_1}) \\
		&= (\dv (u)^\perp, \psi_{\sigma_1})
			- \int_{\prt \Omega_R} ((u) ^\perp \cdot \mathbf{n}) \psi_{\sigma_1} \\
		&= - (\omega^{\nu, R}, \psi_{\sigma_1}).
	\end{split}
\end{align}
Here we used the vanishing of $u^{\nu, R}$ on $\prt \Omega_R$ for all $t > 0$.

The first limit in \refE{LimitE} follows using similar reasoning as for the first limit in \refE{LimitNS}. For the second limit, we perform the same calculation in \refE{usop} with $\ol{u}$ in place of $u^{\nu, R}$, using that $\psi_{\sigma_1}$ is constant on $\prt \Omega_R$. This gives
\begin{align*}
	(\ol{u}, \sigma_1)
		&= - (\ol{\omega}, \psi_{\sigma_1}) - \psi_{\sigma_1}(R) \int_{\Omega_R} \dv \ol{u}^\perp
		= - (\ol{\omega}, \psi_{\sigma_1}) + \psi_{\sigma_1}(R) \int_{\Omega_R} \ol{\omega}.
\end{align*}
Then from \refE{mConservedE} and the first property of $\TR$ in \refL{Trunc},
\begin{align*}
	(\ol{\omega}, \psi_{\sigma_1}) 
		= -(\ol{u}, \sigma_1) + m \psi_{\sigma_1}(R)
\end{align*}
and thus using \refE{psiForrLarge} that
\begin{align*}
	\frac{(\ol{\omega}, \psi_{\sigma_1})}{\log R} 
		= -\frac{(\ol{u}, \sigma_1)}{\log R}
				+ m \frac{C_2 +  \frac{1}{2 \pi} \log R}{\log R}.
\end{align*}
Since the limit of the left-hand side is $m/(2 \pi)$, the second limit in \refE{LimitE} follows.
\end{proof}

\begin{definition}[Approximate inverse to $\TR$]\label{D:UR}
When applied to solutions to ($NS$), we define the operator $\UR \colon V_R \to \X \cap \dot{H}^1(\R^2)$ by
\begin{align*}
	\UR(u) = \set{v  \in E_m  \cap \dot{H}^1(\R^2) \colon v|_{\Omega_R} = v},
\end{align*}
where
\begin{align*}
	m
		= 2 \pi \frac{(v, \sigma_1)}{\log R}
		= - 2 \pi \frac{(\omega(v), \psi_{\sigma_1})}{\log R}.
\end{align*}
When applied to solutions to ($E$), we define the operator $\UR \colon \Y \to \Y(\Omega_R)$ by
\begin{align*}
	\UR(u) = \set{v  \in Y_m \colon v|_{\Omega_R} = v},
\end{align*}
where
\begin{align*}
	 m = \int_{\Omega_R} \omega(v).
\end{align*}
\end{definition}
} 

%
%
\section{Definition of infinite-energy statistical solutions}\label{S:SSNS}

\noindent Following \cite{FMRT} p. 264-265, we define a statistical solution to ($NS$) on $\Omega_R$, first defining the space of test functions.

\begin{definition}\label{D:Test}
The space $\Test_R$ of test functions, $R < \iny$, is the set of all functions $\Phi \colon H_R \to \R$ such that $\Phi(u) = \phi((u, g_1), \dots, (u, g_k))$ for some $\phi$ in $C^1(\R^k)$ and some $g_1, \dots, g_k$ in $V_R$. The  \Frechet derivative of such a $\Phi$ is given by
\begin{align*}
	\Phi'(u) = \sum_{j = 1}^k \prt_j \phi((u, g_1), \dots, (u, g_k)) g_j,
\end{align*}
which lies in $V_R$ since each $g_j$ is in $V_R$. When $R = \iny$, we require that each of $g_1, \dots, g_k$ be compactly supported in $\R^2$, so that $\Phi \colon \X \to \R$, and we also write $\Test$ for $\Test_\iny$. This is the same class of test functions as for homogeneous solutions in the whole space (Definition 2.3 p. 278 of \cite{FMRT}).
\end{definition}

Observe that because each $\prt_j \phi$ is bounded,
\begin{align}\label{e:PhipBound}
	\norm{\Phi'(u)}_{V_R} \le C(\Phi)
\end{align}
for all $u$ in $V_R$.

For $t \ge 0$ and $u$ in $H_R$ let
\begin{align*}
	F_R(t, u)
		= f_R(t) - \nu A_R u - B_R(u),
\end{align*}
where $A_R$ is the Stokes operator and $B_R$ is the classical linear operator associated with the nonlinear term in ($NS$) on $\Omega_R$. (See, for instance, p. 38 of \cite{FMRT}.) We also write $F$ for $F_\iny$, $A$ for $A_\iny$, and $B$ for $B_\iny$.

For $R = \iny$, we will assume for simplicity that $f$ is in $L^2_{loc}([0, \iny); H)$; that is, we do not allow infinite forcing.

\begin{definition}[Statistical solution to ($NS$)]\label{D:SSNS}
Assume that $\mu_0$ is a Borel probability measure on $H_R$. Then a family,
\begin{align*}
	\mu = \set{\mu_t}_{t \ge 0},
\end{align*}
of Borel probability measures on $H_R$, $R < \iny$, is a statistical solution to ($NS$) (SSNS) on $\Omega_R$ if each of the following is satisfied:
\begin{enumerate}
\item For all $\Phi$ in $\Test_R$ and all $t \ge 0$,
\begin{align*}
\int_{H_R} \Phi(u) \, d \mu_t(u)
	= \int_{H_R} &\Phi(u) \, d \mu_0(u)  \\
		&+ \int_0^t \int_{H_R} (F_R(s, u), \Phi'(u)) \, d \mu_s(u) \, ds.
\end{align*}

\item
For all $t \ge 0$,
\begin{align*}
\int_{H_R} \norm{u}_{L^2}^2 \, &d \mu_t(u)
	+ 2 \nu \int_0^t \int_{H_R} \norm{\grad u}_{L^2}^2 \, d \mu_s(u) \, ds \\
		&= \int_0^t \int_{H_R} (f(s), u) \, d \mu_s(u) \, ds
			+ \int_{H_R} \norm{u}_{L^2}^2 \, d \mu_0(u).
\end{align*}

\item
The map
\begin{align*}
	t \mapsto \int_{H_R} \phi(u) \, d \mu_t(u)
\end{align*}
is measurable for all $t \ge 0$ and all $\phi$ in $C^0(H_R)$.

\item
The map
\begin{align*}
	t \mapsto \int_{H_R} \norm{u}_{H_R}^2 \, d \mu_t(u)
\end{align*}
lies in $L^\iny_{loc}([0, \iny))$.

\item
The map
\begin{align*}
	t \mapsto \int_{H_R} \norm{\grad u}_{L^2}^2 \, d \mu_t(u)
\end{align*}
lies in $L^1_{loc}([0, \iny))$.
\end{enumerate}

When $R = \iny$, we make two changes in the definition. First, we replace $H_R$ by $\X$ throughout. Second, the energy equality in (2) is replaced by
\begin{itemize}
\item[(2')]
For all $t \ge 0$,
\begin{align*}
\int_\X &\norm{u - \sigma(u)}_{L^2}^2 \, d \mu_t(u)
	+ 2 \nu \int_0^t \int_\X \norm{\grad (u - \sigma(u))}_{L^2}^2 \, d \mu_s(u) \, ds \\
		&= \int_0^t \int_\X (f(s), u - \sigma(u)) \, d \mu_s(u) \, ds
			+ \int_\X \norm{u - \sigma(u)}_{L^2}^2 \, d \mu_0(u) \\
			&\quad- 2 \int_0^t \int_\X ((u - \sigma(u)) \cdot \grad \sigma(u)), u - \sigma(u))
						\, d \mu_s(u) \, ds \\
 			&\quad\qquad
				- 2 \nu \int_0^t  \int_\X \grad \sigma(u) \cdot \grad (u - \sigma(u))
					\, d \mu_s(u) \, ds,
\end{align*}
\end{itemize}
where $\sigma(u)$ is defined in \refE{L2Part}.
\end{definition}

The following is from Theorems 1.1 and 1.2 Chapter V of \cite{FMRT}: 

\begin{theorem}\label{T:FMRT}
Let $\mu_0$ be as in \refD{SSNS} , $R < \iny$, with kinetic energy
\begin{align*}
	 \int_{H_R} \norm{u}_{H_R}^2 \, d \mu_0(u) < \iny
\end{align*}
and assume that $f$ lies in $L^2_{loc}([0, \iny); H_R)$. There exists a SSNS, $\mu$, as in \refD{SSNS}. If the support of $\mu_0$ is $(H_R, V_R)$-bounded as in \refD{BoundedSupport} (meaning that the containment in \refE{FMRT115} holds for $t = 0$) and $f$ in $H_R$ is time-independent then $\mu_t = S_R(t) \mu_0$ for all $t \ge 0$ is a SSNS. Furthermore, this solution is the unique SSNS satisfying \refEThrough{FMRT114}{FMRT116}:
\begin{align}
	&t \mapsto \int_{H_R} \varphi(u) \, d \mu_t(u)
		\text{ is continuous  on } [0, \iny)
		\text{ for all } \varphi \text{ in } C(H_R^w), \label{e:FMRT114} \\
	&\supp \mu_t \subseteq \set{u \in V_R \colon \norm{u}_{H_R} \le M}
		\text{ for all } t \ge 0 \text{ for some } M,\label{e:FMRT115} \\
	&\int_{H_R} \Psi(t, u) \, d \mu_t(u)
		= \int_{H_R} \Psi(0, u) \, d \mu_0(u) \nonumber \\
		&\qquad\qquad
				+ \int_0^t \int_{H_R}
					\brac{\Psi_s'(s, u) + (F(u), \Psi_u'(s, u))}
						\, d \mu_s(u) \, ds. \label{e:FMRT116}
\end{align}
$H_R^w$ is the space $H_R$ in the weak topology. In \refE{FMRT115}, equality holds for all Fr\'{e}chet-differentiable  continuous real-valued functions on $[0, \iny) \times V_R$ (see the discussion following Equation V.1.16 in \cite{FMRT} for more details).
\end{theorem}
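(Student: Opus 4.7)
The plan is to derive this as an essentially direct translation of Theorems~1.1 and 1.2 in Chapter V of \cite{FMRT}, the only issue being to reconcile minor bookkeeping differences between Foias's formulation and our Definition~\ref{D:SSNS}. I would begin by checking definitional compatibility. The SSNS notion in \cite{FMRT} uses a slightly different convention for the $V_R$-norm (the Poincar\'{e}-equivalent gradient-only norm, whereas we use the full $H^1$-norm as explained after \refE{H1Bounds}). Because $R < \iny$, the Poincar\'{e} inequality on $\Omega_R$ makes these norms equivalent (with an $R$-dependent constant that is irrelevant here, since only a single $R$ is in play), so the space of test functions $\Test_R$ and the measurability/integrability conditions (3)--(5) translate directly. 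Our condition (1) is Foias's evolution equation and our condition (2) is his energy equality.

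For the existence half, I would invoke Theorem~V.1.1 of \cite{FMRT} verbatim: it produces a Borel family $\set{\mu_t}_{t \ge 0}$ solving the evolution equation and energy equality given finite initial kinetic energy and $f$ in $L^2_{loc}([0,\iny); H_R)$.

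For the second half, suppose $\mu_0$ has $(H_R, V_R)$-bounded support and $f$ in $H_R$ is time-independent. The push-forward $\mu_t = S_R(t)_* \mu_0$ is well-defined because $S_R(t)$ is a Borel map $H_R \to V_R$ for $t > 0$ by \refT{NSEProperties}, and it is weakly continuous in $u_0$, so the continuity property \refE{FMRT114} and the bounded-support property \refE{FMRT115} follow from \refT{NSEProperties} together with the hypothesis on $\supp \mu_0$. To verify that this push-forward is a SSNS, I would take a Fr\'{e}chet differentiable $\Psi$ on $[0,\iny) \times V_R$, apply $\Psi(t, \cdot)$ to the deterministic trajectory $t \mapsto S_R(t) u_0$, differentiate using the strong form of ($NS$), and integrate against $\mu_0$; Fubini and the $(H_R, V_R)$-bound on $\supp \mu_0$ legitimize the interchange of integration and give both condition (1) (applied to $\Phi \in \Test_R$, $\Psi$ independent of $t$) and the stronger identity \refE{FMRT116}. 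Taking $\Psi(u) = \norm{u}_{L^2}^2$ (approximated by elements of $\Test_R$ if needed) and using the deterministic energy equality of \refT{EnergyEstimates} yields condition (2).

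For uniqueness among solutions satisfying \refEThrough{FMRT114}{FMRT116}, I would apply Theorem~V.1.2 of \cite{FMRT} directly; that theorem characterizes the push-forward as the unique SSNS whose support stays bounded in $V_R$, whose time-evolution is weakly continuous, and which satisfies the time-dependent identity \refE{FMRT116}. The main (and only) obstacle is the cosmetic adjustment of norms and test-function spaces just described; since $R$ is fixed and finite, no new estimates are required beyond those already embedded in \refT{NSEProperties} and \refT{EnergyEstimates}.
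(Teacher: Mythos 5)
Your proposal is correct and matches the paper's treatment: the paper states this result as a direct quotation of Theorems 1.1 and 1.2 of Chapter V of \cite{FMRT} and offers no proof of its own, so invoking those theorems (with the minor bookkeeping about the equivalent $V_R$-norm, which is harmless since $R$ is fixed and finite) is exactly what is intended. The additional detail you supply about verifying that the push-forward $S_R(t)\mu_0$ satisfies \refE{FMRT116} is consistent with the argument in \cite{FMRT} and does not diverge from it.
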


For statistical solutions to ($E$), we consider only solutions in the whole plane. For solutions to ($E$) there is no term involving the Stokes operator, so we define
\begin{align*}
	F(t, u)
		= f(t) - B(u).
\end{align*}

\begin{definition}[Statistical solution to ($E$) in the plane]\label{D:SSE}

Assume that $\mu_0$ is a Borel probability measure on $\X$. A statistical solution to the Euler equations (SSE) on $\X$ satisfies all the properties of a SSNS in \refD{SSNS} for $R = \iny$ except that the terms involving $\nu$ in property (2') are eliminated.
\end{definition}

%
%
\section{Construction of Navier-Stokes solutions}\label{S:ConstructNS}



\noindent Let $S(t)$ be the solution operator on $\X$ as in \refD{SolutionOperators}. Given that we expect the analog of \refT{FMRT} to hold for infinite-energy solutions in $\R^2$, we would expect that
\begin{align}\label{e:mut}
	\mu_t = S(t) \mu_0
\end{align}
is the unique SSNS associated to the initial measure $\mu_0$ if we assume that the support of the initial Borel probability measure $\mu_0$ is $(\X, \Z)$-bounded as in \refD{BoundedSupport}. We show that this is, in fact, the case. Our approach will be to use the SSNS on $\Omega_R$ and take a limit as $R \to \iny$ in a careful way to demonstrate that $\mu_t$ is a SSNS on all of $\R^2$.

We start by defining the initial probability measure $\mu_0^R$ on $H_R$ by
\begin{align*}
	\mu_0^R (E) = \mu_0(\TRinv E)
\end{align*}
for all Borel measurable subsets $E$ of $H_R$. Then $\mu_0^R$ is a probability measure, for $\mu_0^R(H_R) = \mu_0(\TRinv H_R) = \mu_0(\X) = 1$. Since we are treating initial probability distributions supported on $\X^1$, we use projection into $V_R$. When working with SSNSs as weak as those of \refD{SSNS}, projection into $H_R$ would be used instead (though the limiting argument in that case is considerably more involved).

Similarly, we define the forcing term $f_R$ in $F_R$ by letting
\begin{align}\label{e:fR}
	f_R = \mathbf{P}_{H_R} f,
\end{align}
where $\mathbf{P}_{H_R}$ is projection into $H_R$.
For simplicity, we assume that $f$ is time-independent. Then
\begin{align}\label{e:fConvergence}
	\norm{f - f_R}_{H_R} \to 0
		\text{ as } R \to \iny
\end{align}
from Lemma 4.2 of \cite{K2005ExpandingDomain} and the observation that projection into $H_R$ gives the closest element in $H_R$.

We let $\mu^R$ be the associated SSNS on $\Omega_R$, so that, by \refT{FMRT},
\begin{align*}
	\mu_t^R = S_R(t) \mu_0^R,
\end{align*}
meaning that $S_R(t) \mu_0^R(E) = \mu_0^R(S_R^{-1}(t) E)$ for any Borel measurable subset $E$ of $H_R$.

Let $\Phi$ be in $\Cal{T}$ as in \refD{Test}. Because each $g_j$ is compactly supported, for all sufficiently large $R$, we can define a test function $\Phi_R$ in $\Cal{T}_R$ by
\begin{align}\label{e:PhiRPhi}
	\Phi_R(v)
		\EqDef \phi((v, g_1|_{\Omega_R}), \dots, (v, g_k|_{\Omega_R}))
		= \Phi(\Cal{E}_R v)
\end{align}
for all $v$ in $H_R$, where $\Cal{E}_R v$ is extension by zero of $v$ in $H_R$ to all of $\R^2$. It follows that for all $v$ in $H_R$,
\begin{align}\label{e:PhipRPhip}
	\Cal{E}_R \Phi_R'(v) = \Phi'(\Cal{E}_R v).
\end{align}

From now on, we always assume that $R$ is sufficiently large that \refE{PhiRPhi} holds.

For all $u$ in $\Z$,
\begin{align*}
	&\abs{\Phi_R(\TR u) - \Phi(u)} \\
		&\qquad
			= \abs{\phi((\TR u, g_1|_{\Omega_R}), \dots, (\TR u, g_k|_{\Omega_R}))
				- \phi((u, g_1), \dots, (u, g_k))} \\
		&\qquad
			\le \norm{\grad \phi}_{L^\iny} \abs{(\TR u - u, g_1), \dots, (\TR u - u, g_k)} \\
		&\qquad
			\le \norm{\phi}_{C^1}
			\norm{\TR u - u}_{L^2(\Omega_R)} \pr{\norm{g_1}_H^2 + \cdots + \norm{g_k}_H^2}^{1/2} \\
		&\qquad
			\le C \norm{\TR u - u}_{L^2(\Omega_R)}.
\end{align*}
Thus from \refL{ProjVR},
\begin{align}\label{e:PhiPhiRTR}
	\Phi_R(\TR u)
	 		\to \Phi(u) 
			\text{ as R } \to \iny.
\end{align}

Similarly, for all $u$ in $\Z$,
\begin{align*}
	&\norm{\Phi'_R(\TR u) - \Phi'(u)}_{V_R} \\
		&\quad
			\le \sum_{j=1}^k \abs{\prt_j \phi((\TR u, g_1), \dots, (\TR u, g_k))
					- \prt_j \phi((u, g_1), \dots, (u, g_k))} \norm{g_j}_{V_R} \\
		&\quad
			\le C \sum_{j=1}^k \mu_j \pr{\abs{(\TR u, g_1), \dots, (\TR u, g_k)
					- (u, g_1), \dots, (u, g_k)}} \\
		&\quad
			= C \sum_{j=1}^k \mu_j \pr{\abs{(\TR u - u, g_1), \dots, (\TR u - u, g_k)}} \\
		&\quad
			\le C \sum_{j=1}^k \mu_j \pr{\norm{\TR u - u}_{H_R} 
				 \pr{\norm{g_1}_H^2 + \cdots + \norm{g_k}_H^2}^{1/2}},
\end{align*}
where $\mu_j$ is the modulus of continuity of $\prt_j \phi$. Thus by \refL{ProjVR},
\begin{align}\label{e:PhipPhipR}
	\norm{\Phi_R'(\TR u) - \Phi'(u)}_{V_R} \to 0
		\text{ as } R \to \iny.
\end{align}

Before proceeding, we mention one logical simplification that we cannot make. It might seem reasonable to try to show that
\begin{align}\label{e:MightBeReasonable}
	\begin{split}
	S(t) \mu_0
		&= \lim_{R \to \iny}  \TR \circ S_R(t) \circ \TRinv \mu_0 \\
		&= \lim_{R \to \iny} \mu_0 \circ \TRinv \circ S_R(t)^{-1} \circ \TR
	\end{split}
\end{align}
by showing that equality holds on any Borel measurable set $E$. We note, however, that if $\mu_0$ is supported on a singleton set $E = \set{u_0}$ in $\X$, $u_0$ nonzero, then $\TRinv \circ S_R(t)^{-1} \circ \TR (S_R(t) u_0)$ will in general never equal $u_0$. Thus, the right-hand side above applied to $E$ will evaluate to 0 for all $R$, while the left-hand side will evaluate to 1.

Observe that \refE{MightBeReasonable} is equivalent to saying that
 \begin{align}\label{e:MightBeReasonableEquivalent}
 	\begin{split}
	\int_\X \Phi(u) \, &d \mu_t(u)
		=  \lim_{R \to \iny} \int_{H_R} \Phi \pr{u} \, d \nu_t^R(u),
	\end{split}
\end{align}
where $\nu_t = \TR \mu_t$ for all test functions $\Phi$ (which are dense in the set of bounded continuous functions). We will prove instead that
 \begin{align}\label{e:limPhimut}
 	\begin{split}
	\int_\X \Phi(u) \, &d \mu_t(u)
		=  \lim_{R \to \iny} \int_{H_R} \Phi_R \pr{u} \, d \mu_t^R(u),
	\end{split}
\end{align}
and that similar limits hold for the other integral in Property (1) of \refD{SSNS}, thus circumventing our difficulty. In this weak sense, the expanding domain limit could be said to hold for statistical solutions.

\refT{FMRT} continues to hold for $R = \iny$ if we impose at the outset the condition that the initial velocity $\mu_0$ is supported in $\Z$. This condition is required to allow us to take advantage of the expanding domain limit and related bounds from \refS{Expanding}. The idea for proving existence is to first assume that the measure has bounded support in $\Z$, apply the results of \refS{Expanding}, which require such support, then use the linearity of properties (1)-(5) of \refD{SSNS} to drop the boundedness assumption. This leads to \refT{FMRTInf}.

\begin{theorem}\label{T:FMRTInf}
Let $\mu_0$ be as in \refD{SSNS} for $R = \iny$, but supported in $\Z$, and having ``energy''
\begin{align*}
	 \int_\X \norm{u}_\X^2 \, d \mu_0(u) < \iny.
\end{align*}
Assume that $f$ lies in $L^2_{loc}([0, \iny); V)$. There exists a SSNS, $\mu$, as in \refD{SSNS}. If the support of $\mu_0$ is $(\X, \Z)$-bounded as in \refD{BoundedSupport}  (meaning that \refE{FMRT115Inf} holds for $t = 0$) and $f$ in $V$ is time-independent then $\mu_t = S(t) \mu_0$ for all $t \ge 0$ is a SSNS. Furthermore, this solution is the unique SSNS satisfying \refEThrough{FMRT114Inf}{FMRT116Inf}:
\begin{align}
	&t \mapsto \int_\X \varphi(u) \, d \mu_t(u)
		\text{ is continuous  on } [0, \iny)
		\text{ for all } \varphi \text{ in } C(H^w), \label{e:FMRT114Inf} \\
	&\supp \mu_t \subseteq \set{u \in \Z \colon \norm{u}_\X \le M(t)}
		\text{ for all } t \ge 0,\label{e:FMRT115Inf} \\
	&\int_\X \Psi(t, u) \, d \mu_t(u)
		= \int_\X \Psi(0, u) \, d \mu_0(u) \nonumber \\
		&\qquad\qquad
				+ \int_0^t \int_X
					\brac{\Psi_s'(s, u) + (F(u), \Psi_u'(s, u))}
						\, d \mu_s(u) \, ds. \label{e:FMRT116Inf}
\end{align}
In \refE{FMRT115Inf}, $M$ is continuous on $[0, \iny)$. In \refE{FMRT115Inf}, equality holds for all Fr\'{e}chet-differentiable  continuous real-valued functions on $[0, \iny) \times V$.
\end{theorem}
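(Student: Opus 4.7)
The plan is to construct $\mu_t = S(t)\mu_0$ as the limit of the finite-domain SSNSs $\mu_t^R = S_R(t)\mu_0^R$ provided by \refT{FMRT}, where $\mu_0^R = \mu_0 \circ \TRinv$ and $f_R = \mathbf{P}_{H_R} f$. Under the $(\X, \Z)$-boundedness hypothesis on $\mu_0$ combined with \refL{ProjVR}, each $\mu_0^R$ has $(H_R, V_R)$-bounded support with an $R$-independent constant, so \refT{FMRT} applies uniformly in $R$; \refE{fConvergence} places the forcing in the framework. I verify the five properties of \refD{SSNS} for $\mu_t$ by passing to the limit $R\to\iny$ in the corresponding properties of $\mu_t^R$.

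For property (1), pushing the $\mu_t^R$-identity for a test function $\Phi_R$ (as in \refE{PhiRPhi}) back to an integral against $\mu_0$ yields
\begin{align*}
\int_\X \Phi_R(S_R(t)\TR u_0)\, d\mu_0(u_0)
&= \int_\X \Phi_R(\TR u_0)\, d\mu_0(u_0) \\
&\quad + \int_0^t\!\! \int_\X (F_R(s, S_R(s)\TR u_0), \Phi_R'(S_R(s)\TR u_0))\, d\mu_0(u_0)\, ds.
\end{align*}
Pointwise convergence for $\mu_0$-a.e.\ $u_0 \in \Z$ follows from \refC{CorExpandingDomainLimit} combined with \refE{PhiPhiRTR} and \refE{PhipPhipR} (for the test functions) and from \refT{ExpandingDomainLimit} (for $F_R \to F$ in the $V_R'$--$V_R$ duality). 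The uniform bounds \refE{FBound}, \refL{FBoundNoTime}, and \refE{PhipBound}, together with the $(\X,\Z)$-boundedness of $\supp\mu_0$, supply the dominating functions required to apply dominated convergence, giving \refE{limPhimut} and hence property (1) for $\mu_t$.

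For the energy equality (2'), I apply property (2) of \refD{SSNS} to $\mu_t^R$, but rewritten in the form \refE{NSEnergyEquality} with $u - \ol{\sigma}_m$ in place of $u$, so that the nonlinear cross term $(\ol{\sigma}_m\!\cdot\!\grad\ol{\sigma}_m)$ drops out by \refL{PVRsigmam}. I then pass to the limit using \refC{CEnergyBound} for domination, and \refE{TRNSHighLimit}--\refE{sigma1RR1} to identify the $\ol{\sigma}_m$ part with $\sigma_m$ in the limit. Properties (3)--(5) and weak continuity \refE{FMRT114Inf} follow from the weak continuity of $S(t)$ on $\X$ together with standard measurability arguments, while \refE{FMRT115Inf} is a direct consequence of \refE{NSEnergyBound} with $M(t)$ continuous in $t$. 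The time-dependent identity \refE{FMRT116Inf} for Fr\'{e}chet-differentiable $\Psi$ is obtained from property (1) by a time-approximation and chain-rule argument mirroring that of Chapter V of \cite{FMRT}.

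For uniqueness under \refEThrough{FMRT114Inf}{FMRT116Inf}, any alternative $\tilde\mu_t$ satisfying those constraints is identified, via \refE{FMRT116Inf} specialized to $\Psi$ independent of $s$ and then to $\Psi$ depending on $s$ through an exponential weight, with the pushforward of $\mu_0$ by the deterministic semigroup, which by uniqueness in \refT{NSEProperties} must be $S(t)$. To remove the $(\X,\Z)$-bounded-support hypothesis, I decompose $\mu_0$ as a countable sum of mutually singular measures supported in shells $\smallset{\norm{u}_\X \le n}$, build an SSNS for each piece, and sum, using linearity in $\mu$ of all items in \refD{SSNS} and the finite-energy hypothesis to secure absolute convergence. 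The main obstacle I anticipate is the passage to the limit in the energy equality (2'): the decomposition $u = \sigma(u) + (u - \sigma(u))$ depends measurably on $u$ through $m(u)$, and the three additional integrals in (2') (convective cross term, viscous cross term, and forcing against $u-\sigma(u)$) must be shown $R$-uniformly integrable against $\mu_0$, which ultimately rests on the decay in \refE{sigma1RR1} and the tail control in \refL{Tail}.
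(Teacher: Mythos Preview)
Your proposal is correct and follows essentially the same route as the paper: define $\mu_t = S(t)\mu_0$, verify property~(1) of \refD{SSNS} by pushing the $\mu_t^R$-identity back to $\mu_0$ via \refL{IntXToHR}/\refL{mutTomuzero}, pass to the limit using the expanding-domain estimates of \refT{ExpandingDomainLimit}, \refC{CorExpandingDomainLimit}, \refL{FBoundNoTime} together with \refE{PhiPhiRTR}--\refE{PhipPhipR} and dominated convergence, then remove the bounded-support assumption by the linearity/shell decomposition of \cite{FMRT}. The only visible divergence is in the uniqueness sketch: the paper adapts the Galerkin argument on p.~319--321 of \cite{FMRT} (replacing Stokes eigenfunctions by a basis for $\Z$ and using \refE{NSEnergyBoundGen} in place of the $\lambda_m$-bound), whereas your ``exponential-weight'' specialization of \refE{FMRT116Inf} is a different heuristic and, as stated, does not by itself force $\tilde\mu_t$ to be a pushforward---you would still need the Galerkin-type bridge to reduce to deterministic uniqueness.
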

\begin{proof}
\textbf{Existence}:
Assume first that the initial Borel probability measure $\mu_0$ is bounded in $\X^1$, meaning that
\begin{align*}
	\supp \mu_0 \subseteq \set{u \in \X^1 \colon \norm{u}_{\X^1} \le M},
		\text{ for some } M,
\end{align*}
and define $\mu_t$ by \refE{mut} for $t \ge 0$. By \refE{NSNormBounds} it follows that
\begin{align}\label{e:mutBoundedE}
	\supp \mu_t \subseteq \set{u \in \X^1 \colon \norm{u}_{\X^1} \le M(t)}
\end{align}
for some continuous function $M$.

In \refT{ExpandingDomainLimit}, for initial velocity $u_0$ in $\Z$ we defined $u_R(t) = S_R(t) \TR u_0$ and $u(t) = S(t) u_0$. In this proof we will be integrating over all initial velocities in $\X$ and calling the initial velocity $u$, to agree with the notation of \cite{FMRT}. In this notation, \refE{FBound} and \refE{FBoundSingle}  become
\begin{align}\label{e:FBoundAll}
	\begin{split}
		\begin{array}{ll}
	\norm{S_R(t) \TR u - \sigma(u)}_{L^\iny([0, T]; L^2(\Omega_R))}, &
		\norm{\grad S_R(t) \TR u}_{L^2([0, T]; L^2(\Omega_R))}, \\
	\norm{F_R(t, S_R(t) \TR u)}_{L^2([0, T]; V_R'(\Omega_R))}, &
			\norm{F_R(t, \TR S(t) u)}_{V_R'(\Omega_R)}
		\end{array}
	\end{split}
\end{align}
are bounded on $\supp \mu_0$ uniformly over all $R$ in $[1, \iny]$. This will allow us to apply the dominated convergence theorem in several steps in our proof. 

Using \refE{PhiRPhi}, for all $u$ in $\Z$,
\begin{align}\label{e:PhiPhiR}
	\begin{split}
	\Phi &\pr{\lim_{R \to \iny} S_R(t) \TR u}
		=  \Phi \pr{\lim_{R \to \iny} \Cal{E}_R S_R(t) \TR u} \\
		&\qquad=  \lim_{R \to \iny} \Phi \pr{\Cal{E}_R S_R(t) \TR u}
		=  \lim_{R \to \iny} \Phi_R \pr{S_R(t) \TR u},
	\end{split}
\end{align}
where we used the continuity of $\Phi$. Thus we have,
 \begin{align*}
 	\begin{split}
	\int_\X \Phi(u) \, &d \mu_t(u)
		= \int_\X \Phi(S(t) u) \, d \mu_0(u) \\
		&= \int_\X \Phi \pr{\lim_{R \to \iny} S_R(t) \TR u} \, d \mu_0(u) \\
		&=  \int_\X \lim_{R \to \iny}  \Phi_R \pr{S_R(t) \TR u} \, d \mu_0(u)  \\
		&=  \lim_{R \to \iny} \int_\X \Phi_R \pr{S_R(t) \TR u} \, d \mu_0(u)  \\
		&=  \lim_{R \to \iny} \int_{H_R} \Phi_R \pr{S_R(t) v} \, d \mu_0^R(v)  \\
		&=  \lim_{R \to \iny} \int_{H_R} \Phi_R \pr{u} \, d \mu_t^R(u)
	\end{split}
\end{align*}
giving \refE{limPhimut}.
The first equality follows from \refE{mut}, since the space of bounded continuous functions is dual to the space of Borel probability measures. The limit in the second equality follows from \refT{ExpandingDomainLimit}. The third equality follows from \refE{PhiPhiR}. The fourth equality follows by the dominated convergence theorem, since $\Phi_R$ is uniformly bounded over $R$ in $[1, \iny]$ and $\mu_0$ is a finite measure. The fifth equality follows from \refL{IntXToHR}. The sixth and final equality follows in the same way as does the first.

This shows that \refE{limPhimut} holds for all $t \ge 0$, so if we can show that
\begin{align}\label{e:FToShow}
	\begin{split}
	\int_0^t \int_\X &(F(s, u), \Phi'(u)) \, d \mu_s(u) \, ds \\
		&= \lim_{R \to \iny} \int_0^t \int_{H_R} (F_R(s, u), \Phi_R'(u)) \, d \mu^R_s(u) \, ds
	\end{split}
\end{align}
then we will have established the first property of \refD{SSNS} for $\mu$.

Toward this end,
\begin{align}\label{e:FFRPVR}
	\begin{split}
	\int_0^t &\int_\X (F(s, u), \Phi'(u)) \, d \mu_s(u) \, ds \\
		&= \int_0^t \int_\X (F(s, S(s) u), \Phi'(S(s) u)) \, d \mu_0(u) \, ds \\
		&= \int_0^t \int_\X \lim_{R \to \iny} (F_R(s, \TR S(s) u), \Phi'(S(s) u))  \, d \mu_0(u) \, ds \\
		&= \int_0^t \int_\X \lim_{R \to \iny}   (F_R(s, \TR S(s) u), \Phi_R'(\TR S(s) u))  \, d \mu_0(u) \, ds \\
		&= \lim_{R \to \iny}  \int_0^t \int_\X (F_R(s, \TR S(s) u), \Phi_R'(\TR S(s) u))  \, d \mu_0(u) \, ds.
	\end{split}
\end{align}
The first equality follows from \refL{mutTomuzero}.
The second equality follows from \refL{FBoundNoTime} and \refE{FBoundAll}, since $\Phi'(S(s) u)$ is bounded and compactly supported---and so also we can view the pairings as being in either the duality between $V$ and $V'$ or between $V_R$ and $V_R'$.
The third equality follows from \refE{PhipPhipR}.
The fourth equality follows from the dominated convergence theorem using \refE{FBoundAll}.

We would like to commute the roles of the projection operator and the solution operator in the right-hand side of \refE{FFRPVR} to allow us to apply \refL{IntXToHR}. To do this, we estimate,
\begin{align*}
	D(s, &u)
		= \vert
	(F_R(s, \TR S(s) u), \Phi_R'(\TR S(s) u)) \\
		&\qquad- (F_R(s, S_R(s) \TR u), \Phi_R'(S_R(s) \TR u))
	\vert \\
	&\le
	\vert
	(F_R(s, \TR S(s) u) - F_R(s, S_R(s) \TR u), \Phi_R'(\TR S(s) u))
	\vert \\
	  &\qquad+
	\vert
	(F_R(s, S_R(s) \TR u), \Phi_R'(\TR S(s) u) -  \Phi_R'(S_R(s) \TR u))
	\vert \\
	&\le
	\norm{F_R(s, \TR S(s) u) - F_R(s, S_R(s) \TR u)}_{V_R'} \norm{\Phi_R'(\TR S(s) u)}_{V_R} 
	  \\
	 &\qquad+
	\norm{F_R(s, S_R(s) \TR u)}_{V_R'} \norm{\Phi_R'(\TR S(s) u) -  \Phi_R'(S_R(s) \TR u)}_{V_R} .
\end{align*}

Letting
\begin{align}\label{e:hDef}
	h(R, u, s) := \norm{\Phi_R'(\TR S(s) u) -  \Phi_R'(S_R(s) \TR u)}_{V_R},
\end{align}
we have
\begin{align}\label{e:DIntBound}
	\begin{split}
	\int_0^t &\int_\X D(s, u) \, d \mu_0(u) \, ds \\
			&\le C \int_0^t \int_\X \norm{F_R(s, \TR S(s) u) - F_R(s, S_R(s) \TR u)}_{V'_R}  \, d \mu_0(u) \, ds \\
		&\qquad\qquad
			+ \int_0^t \int_\X \norm{F_R(s, S_R(s) \TR u)}_{V'_R} h(R, u, s) \, d \mu_0(u) \, ds.
	\end{split}
\end{align}
Applying the Cauchy-Schwarz inequality, we can bound the last term by
\begin{align*}
	 \int_\X &\norm{F_R(s, S_R(s) \TR u)}_{L^2([0, t]; V'_R)}
	 			\norm{h(R, u, \cdot)}_{L^2([0, t])}
	 	 \, d \mu_0(u) \\
		 	&\le C \int_\X 
	 			\norm{h(R, u, \cdot)}_{L^2([0, t])}
	 	 \, d \mu_0(u),
\end{align*}
using \refE{FBoundAll}.

By \refE{PhipRPhip},
\begin{align*}
	h(R, u, s) &= \norm{\Phi_R'(\TR S(s) u) -  \Phi_R'(S_R(s) \TR u)}_{V_R} \\
		&
		= \norm{\Phi'(\Cal{E}_R \TR S(s) u) -  \Phi'(\Cal{E}_R S_R(s) \TR u)}_V.
\end{align*}
By \refE{PhipBound}, $\norm{\Phi'(v)}_V \le C_0$ for all $v$ in $H_R$, for some $C_0$ independent of $R$. Thus, $h(R, u, \cdot) \le 2 C_0$. Also, because $\Phi' \colon H \to V$ is continuous it follows from \refE{ExpHR} that $h(R, u, \cdot) \to 0$ as $R \to \iny$ for all $u$ in $\Z$.
Hence, for all $u$ in $\Z$,
\begin{align*}
	\norm{h(R, u, \cdot)}_{L^2([0, t])}^2
		= \int_0^t h(R, u, s)^2 \, ds
		\to 0
\end{align*}
by the dominated convergence theorem. 
But then also $\norm{h(R, u, \cdot)}_{L^2([0, t])} \le 2 C_0 t^{1/2}$ and applying the dominated convergence theorem again gives
\begin{align*}
	\int_\X \norm{h(R, u, \cdot)}_{L^2([0, t])} \to 0
		\text{ as } R \to \iny.
\end{align*}
We conclude that the second term on the right-hand side of \refE{DIntBound} vanishes as $R \to \iny$.

For the first term in the right-hand side of \refE{DIntBound},
\begin{align*}
	&\norm{F_R(s, \TR S(s) u) - F_R(s, S_R(s) \TR u)}_{V'_R} \\
		&\qquad
		\le \norm{F_R(s, \TR S(s) u) - F(s, S(s)u)}_{V'_R} \\
		&\qquad\qquad\qquad
			+ \norm{F(s, S(s) u)- F_R(s, S_R(s) \TR u)}_{V'_R}.
\end{align*}
Since $d \mu_0(u) \, ds$ is a finite measure on $[0, t] \times \X$ and the first term on the right-hand side is both bounded and vanishes as $R \to \iny$ by \refL{FBoundNoTime} and \refE{FBoundAll}, after being integrated over $[0, t] \times \X$ the first term vanishes as $R \to \iny$.
The $L^2([0, t])$-norm of the second term on the right-hand side is bounded on the support of $\mu_0$ by \refE{FBoundAll} and vanishes as $R \to \iny$ by \refE{FDiffBound}; applying the Cauchy-Schwarz inequality followed by the dominated convergence theorem shows that
\begin{align*}
	&\int_0^t \int_\X \norm{F(s, S(s) u)- F_R(s, S_R(s) \TR u)}_{V'_R}   \, d \mu_0(u) \, ds \\
		&\qquad \le t^{1/2} \int_\X  \norm{F(s, S(s) u)- F_R(s, S_R(s) \TR u)}_{L^2([0, t]; V'_R)}
					\, d \mu_0(u)
\end{align*}
vanishes as $R \to \iny$.

We conclude that $D(s, u)$ integrates to zero in the limit as $R \to \iny$, meaning that
\begin{align*}
	\int_0^t &\int_\X (F(s, u), \Phi'(u)) \, d \mu_s(u) \, ds \\
		&=  \lim_{R \to \iny}\int_0^t \int_\X (F_R(s, S_R(s) \TR u), \Phi_R'(S_R(s) \TR u))  \, d \mu_0(u) \, ds.
\end{align*}

Finally, using \refL{IntXToHR} and \refL{mutTomuzero},
\begin{align*}
	 \int_0^t \int_\X &(F_R(s, S_R(s) \TR u), \Phi_R'(S_R(s) \TR u))  \, d \mu_0(u) \, ds \\
		&= \int_0^t \int_{H_R} (F_R(s, S_R(s) v), \Phi_R'(S_R(s) v))  \, d \mu_0^R(v) \, ds \\
		&= \int_0^t \int_{H_R} (F_R(s, v), \Phi_R'(v))  \, d \mu_s^R(v) \, ds,
\end{align*}
giving \refE{FToShow}, completing the demonstration that property (1) of \refD{SSNS} is satisfied for $\mu$.

The other properties in \refD{SSNS} follow more easily, using the dominated convergence theorem and the first two bounds in \refE{FBound}. Thus, we have established the existence of a SSNS for $R = \iny$ when the initial probability measure has bounded support in $\Z$. But we can drop this restriction by exploiting the inherent linearity in the definition of a SSNS, as done on p. 318 of \cite{FMRT}. This establishes the existence part of the theorem.

\textbf{Higher regularity}: We now add the assumption that the support of $\mu_0$ is $(\X, \Z)$-bounded. \refE{FMRT114Inf} and \refE{FMRT115Inf} follow much as did properties (2) through (5) of \refD{SSNS}. Adding the assumption that $f$ is time-independent, \refE{FMRT116Inf} follows for $R = \iny$ in the same way it does for $R < \iny$.

\textbf{Uniqueness}: The proof of uniqueness for $R < \iny$ on p. 319-321 of \cite{FMRT} applies with the following two changes: First, in the Galerkin approximation we use a basis for $\Z$ in place of the eigenfunctions of the Stokes operator (the spectrum no longer being discrete). Second, we use the energy bound in \refE{NSEnergyBoundGen} for $R = \iny$ in place of the bound involving the eigenvalue, $\la_m$, of the Stokes operator.
\end{proof}

\Ignore{ 
\begin{remark}\label{R:TRVersusPR4}

The proof of \refT{FMRTInf} would be simplified by using $\UR$ in place of $\TR$. For one, we would not need to make the initial assumption in \refE{mu0BoundedE1} (though this adds little complication). Second, $\UR$ has the property that $\UR u = u$ on $\Omega_{R/2}$. Together with the compact support of the test functions in $\Test$ gives immediate equality in certain integrals which we only showed held in the limit as $R \to \iny$. Third,
\end{remark}
} 

We used the following two elementary lemmas in the proof of \refT{FMRTInf}. Note that when we say that equality holds between two integrals when the integrands are only Borel measurable, we mean that either both integrals are defined and equal or that both integrals are undefined. We state the lemmas this way because in their application we do not always know a priori that the integrands are integrable.

\begin{lemma}\label{L:IntXToHR}
	For any Borel measurable function $f$ on $H_R$,
	\begin{align}\label{e:IntxIntHR}
		\int_\X f(\TR u) \, d \mu_0(u)
			&= \int_{H_R} f(v) \, d \mu_0^R(v).
	\end{align}
\end{lemma}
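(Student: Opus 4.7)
The plan is to prove this as a standard abstract change-of-variables (push-forward) identity. Recall that by definition, $\mu_0^R = \TR_* \mu_0$ is the push-forward measure, that is, $\mu_0^R(E) = \mu_0(\TRinv E)$ for every Borel measurable $E \subseteq H_R$. Before anything else I would observe that $\TR \colon \Z \to V_R \subseteq H_R$ is continuous (it is the composition of restriction and an orthogonal projection) and hence Borel measurable as a map from $\X$ to $H_R$, so $f \circ \TR$ is Borel measurable on $\X$ whenever $f$ is Borel measurable on $H_R$. This ensures that both sides of \refE{IntxIntHR} are well-posed (either simultaneously defined or simultaneously undefined).

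First I would verify the identity for indicator functions. If $f = \chi_E$ for a Borel set $E \subseteq H_R$, then
\begin{align*}
    \int_\X \chi_E(\TR u) \, d\mu_0(u)
        = \mu_0(\TRinv E)
        = \mu_0^R(E)
        = \int_{H_R} \chi_E(v) \, d\mu_0^R(v),
\end{align*}
where the second equality is precisely the definition of $\mu_0^R$. Next, by linearity of the integral, the identity extends to nonnegative simple functions. Given an arbitrary nonnegative Borel measurable $f$ on $H_R$, I would choose a monotone increasing sequence $f_n$ of nonnegative simple functions with $f_n \uparrow f$ pointwise. Then $f_n \circ \TR \uparrow f \circ \TR$ pointwise on $\X$, and two applications of the monotone convergence theorem (one on each side) yield \refE{IntxIntHR} for nonnegative $f$.

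For a general Borel measurable $f$, I would apply the positive/negative decomposition $f = f^+ - f^-$. If either $\int_{H_R} f^+ \, d\mu_0^R$ or $\int_{H_R} f^- \, d\mu_0^R$ is finite, then by the preceding step the same holds for the corresponding integrals over $\X$, and \refE{IntxIntHR} follows by linearity; otherwise both sides are undefined, consistent with the convention stated before the lemma. There are no real obstacles here: the entire argument is the standard push-forward-measure change-of-variables theorem, and the only ingredient particular to the present setup is the Borel measurability of $\TR$, which is immediate from its continuity established in \refL{ProjVR}.
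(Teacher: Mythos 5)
Your proposal is correct and follows essentially the same route as the paper's own proof: the standard push-forward change-of-variables argument, verified first for characteristic functions via the definition of $\mu_0^R$, then extended to simple functions by linearity, to nonnegative functions by monotone convergence, and finally to general Borel measurable functions. Your treatment is slightly more explicit about the $f = f^+ - f^-$ decomposition and the convention for undefined integrals, but there is no substantive difference.
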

\begin{proof}
First observe that $f \circ \TR$ is Borel measurable on $\X$ because $\TR$ is Borel measurable (in fact, continuous) and $f$ is Borel measurable, so the left-hand side of \refE{IntxIntHR} is well-defined. When $f = \chi_E$, the characteristic function of a Borel measurable subset $E$ of $H_R$,
\begin{align*}
	\int_\X f(\TR u) \, d \mu_0(u)
		&= \mu_0(\TRinv E)
		= \mu_0^R(E)
		= \int_{H_R} f(v) \, d \mu_0^R(v).
\end{align*}
\refE{IntxIntHR} then holds for simple functions by linearity, for nonnegative functions by the monotone convergence theorem, and hence for all Borel measurable functions.
\end{proof}

\begin{lemma}\label{L:mutTomuzero}
	For any function $f$ that is Borel measurable on $H_R$,
	\begin{align*}
		\int_{H_R} f(u) \, d \mu_t^R(u)
			&= \int_{H_R} f(S_R(t) u) \, d \mu_0^R(u).
	\end{align*}
	When $f$ is Borel measurable on $X$,
	\begin{align*}
		\int_X f(u) \, d \mu_t(u)
			&= \int_X f(S(t) u) \, d \mu_0(u).
	\end{align*}	
\end{lemma}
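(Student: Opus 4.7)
The plan is to prove this lemma by the standard change-of-variables (pushforward) argument, essentially the same approach used for \refL{IntXToHR} just above it. The key observation is that by the definition given earlier in the paper, $\mu_t^R = S_R(t) \mu_0^R$ is the pushforward measure, meaning $\mu_t^R(E) = \mu_0^R(S_R(t)^{-1} E)$ for every Borel subset $E$ of $H_R$, and similarly $\mu_t = S(t)\mu_0$ on $\X$. Continuity of $S_R(t)$ and $S(t)$ (noted after \refD{SolutionOperators}) ensures that $S_R(t)^{-1} E$ and $S(t)^{-1} E$ are Borel whenever $E$ is, so that $f \circ S_R(t)$ and $f \circ S(t)$ are Borel measurable and the integrals on the right-hand sides are well-defined in the sense indicated.

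First I would handle the case $f = \chi_E$, the characteristic function of a Borel set $E \subseteq H_R$. Then
\begin{align*}
    \int_{H_R} \chi_E(u)\, d\mu_t^R(u)
        = \mu_t^R(E)
        = \mu_0^R(S_R(t)^{-1} E)
        = \int_{H_R} \chi_E(S_R(t) u)\, d\mu_0^R(u),
\end{align*}
since $\chi_E \circ S_R(t) = \chi_{S_R(t)^{-1} E}$. Linearity then extends the identity to simple Borel measurable functions.

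Next I would pass from simple functions to nonnegative Borel measurable $f$ by the monotone convergence theorem, approximating $f$ pointwise from below by an increasing sequence of simple functions $f_n$: applying the previous step to each $f_n$ and taking the limit on both sides produces the identity for $f$. Finally, for a general Borel measurable $f$, I would split $f = f^+ - f^-$ into positive and negative parts and apply the nonnegative case to each, yielding the stated equality (with the understanding, as in the lemma's phrasing, that either both integrals are defined and equal or both fail to be defined). The argument for the $\X$-statement is identical, with $S(t)$ in place of $S_R(t)$ and $\mu_t,\mu_0$ in place of $\mu_t^R,\mu_0^R$; continuity of $S(t) : \X \to \Z \hookrightarrow \X$ (from \refD{SolutionOperators}) again supplies the needed Borel measurability.

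There is no genuine obstacle here: this is just the standard ``integration with respect to a pushforward measure'' identity, and the only thing to be careful about is measurability, which is guaranteed by continuity of the solution operators. The lemma could equivalently be stated as $S_R(t)_\ast \mu_0^R = \mu_t^R$ and $S(t)_\ast \mu_0 = \mu_t$, but spelling it out in integral form is convenient for its use in the proof of \refT{FMRTInf}.
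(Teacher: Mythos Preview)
Your proof is correct and follows exactly the approach the paper takes: it refers back to the proof of \refL{IntXToHR}, which establishes the identity first for characteristic functions, then simple functions by linearity, then nonnegative functions via the monotone convergence theorem, and finally all Borel measurable functions. Your additional remarks on measurability via continuity of the solution operators are in the same spirit as the paper's remark in \refL{IntXToHR}.
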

\begin{proof}
	As in the proof of \refL{IntXToHR}, equality holds for simple functions, then nonnegative functions,
	then all Borel measurable functions.
\end{proof}

\Ignore{ 
\begin{lemma}\label{L:IntConvergence}
Let $(g_n)$ be a sequence of measurable functions on the finite measure space $(A, \Cal{M}, \mu)$ that converges to 0 almost everywhere and that are uniformly bounded by $M$. Then
	\begin{align*}
		\int_A \abs{g_n} \, d \mu \to 0 \text{ as } n \to \iny.
	\end{align*}
\end{lemma}
\begin{proof}
Follows from the dominated convergence theorem, since $\mu$ is a finite measure.
\end{proof}
} 

%
%
\section{Construction of Euler solutions}\label{S:ConstructE}

\noindent We construct infinite-energy statistical solutions to the Euler equations by making a vanishing viscosity argument, using the infinite-energy statistical solutions to the Navier-Stokes equations that we constructed in \refS{ConstructNS}.

For initial velocities as in \refT{FMRTForEInf}, we have the following for SSNSs:
 \begin{theorem}\label{T:FMRTForNSInf}
 Assume that the support of the initial velocity $\mu_0$ for a SSNS with $R = \iny$ is bounded in $\Y$ as in \refD{BoundedSupport} and that $f$ is time-independent and lies in $\Y_0$. Then the SSNS also satisfies
\begin{align}\label{e:FMRT115E} 
	&\supp \mu_t \subseteq \set{u \in \Y \colon \norm{u}_{\Y} \le M(t)},
\end{align}
for a continuous function $M$ independent of $\nu$, and for all $p$ in $[p_0, \iny]$,
\Ignore{ 
\begin{align}\label{e:NSomegaIdentity}
	\begin{split}
	\int_\X &\norm{\omega}_{L^p}^p \, d \mu_t(u)
		+ p (p - 1) \int_0^t \int_\X \smallnorm{\abs{\omega}^{p/2 - 1} 
						\grad \omega}_{L^2(\R^2)}^2 \, d \mu_t(u) \\
		&= \int_\X \norm{\omega}_{L^p}^p \, d \mu_0(u) 
			+ p \int_0^t \int_\X (\omega(f), \abs{\omega}^{p - 2} \omega) \, d \mu_t(u) \, ds
	\end{split}
\end{align}
and
} 
\begin{align}\label{e:EomegaBound}
	\int_\X \norm{\omega(u)}_{L^p} \, d \mu_t(u)
		\le \int_\X \norm{\omega(u)}_{L^p} \, d \mu_0(u)
			+ \int_0^t \norm{\omega(f(s))}_{L^p}  \, ds.
\end{align}
\end{theorem}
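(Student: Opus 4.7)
The plan is to reduce both assertions to deterministic bounds on individual trajectories and then average against $\mu_0$. Since the support of $\mu_0$ is bounded in $\Y$, and so in particular in $\Z$, \refT{FMRTInf} applies and gives $\mu_t = S(t)\mu_0$; combined with \refL{mutTomuzero}, this converts integrals against $\mu_t$ into integrals of $S(t)$-transported quantities against $\mu_0$. It then suffices to establish, for each $u_0$ in $\supp \mu_0$, pointwise-in-$u_0$ bounds on $\norm{\omega(S(t)u_0)}_{L^p}$ and on $\norm{S(t)u_0}_\X$ that are continuous in $t$ and independent of $\nu \ge 0$.

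For the $L^p$ vorticity inequality, I would start from the 2D vorticity form of ($NS$), $\prt_t \omega + u \cdot \grad \omega - \nu \Delta \omega = \omega(f)$, which holds for the regularity guaranteed by \refT{NSEProperties}(1) when $u_0 \in \Y_m$. Multiplying by $\abs{\omega}^{p-2}\omega$ and integrating over $\R^2$: the transport term vanishes by $\dv u = 0$, the viscous term contributes a nonpositive $-(p-1)\nu \int \abs{\omega}^{p-2}\abs{\grad \omega}^2$, and \Holders inequality bounds the forcing term by $\norm{\omega(f)}_{L^p}\norm{\omega}_{L^p}^{p-1}$. Dividing by $\norm{\omega}_{L^p}^{p-1}$ and integrating in time yields the deterministic pointwise bound
\begin{align*}
    \norm{\omega(t)}_{L^p} \le \norm{\omega(u_0)}_{L^p} + \int_0^t \norm{\omega(f(s))}_{L^p}\,ds
\end{align*}
for all finite $p \in [p_0,\iny)$; the $p = \iny$ case follows either by sending $p \to \iny$ or directly from the maximum principle for the forced drift--diffusion equation. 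Discarding the viscous dissipation is what makes the estimate independent of $\nu$, and integrating it against $\mu_0$ and invoking \refL{mutTomuzero} and Fubini produces \refE{EomegaBound}.

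The support bound \refE{FMRT115E} then follows by combining this $L^{p_0}\cap L^\iny$ control of the vorticity with a $\nu$-independent pointwise bound on $\norm{S(t)u_0}_\X$: the latter is a consequence of \refE{NSEnergyBoundGen}, whose right-hand side, after dropping the nonnegative viscous term on the left, is $\nu$-independent up to the harmless factor $Cm^2\nu t$ that can be absorbed into the Gronwall constant by replacing $\nu$ with $\sup_{\nu > 0} \nu$ on a bounded window, while the $\abs{m}$ part of the $\X$-norm is simply $\abs{m(u_0)}$, which is preserved by $S(t)$. Taking the supremum over the bounded set $\supp \mu_0 \subset \Y$ and using the continuous dependence of the deterministic bounds on $\norm{u_0}_\Y$ yields a continuous function $M(t)$ with the required property.

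The main obstacle I anticipate is the rigorous justification of the $|\omega|^{p-2}\omega$ multiplier computation at the level of the weak infinite-energy solutions of \refT{NSEProperties}(1), rather than smooth ones, so that the integration-by-parts needed to discard the transport term is legitimate and the resulting constants are genuinely $\nu$-independent (the latter being essential for the vanishing-viscosity argument in \refS{ConstructE}). Both issues are settled by the standard mollification and approximation procedure of \cite{C1996, C1998}, which is already implicit in the deterministic bounds cited from \refT{NSEProperties}.
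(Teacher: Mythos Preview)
Your approach is essentially identical to the paper's: both establish the deterministic pointwise bound $\norm{\omega(S(t)u_0)}_{L^p} \le \norm{\omega(u_0)}_{L^p} + \int_0^t \norm{\omega(f)}_{L^p}$ via the $\abs{\omega}^{p-2}\omega$ multiplier argument, then push forward through $\mu_t = S(t)\mu_0$ to obtain \refE{EomegaBound}, from which \refE{FMRT115E} follows. The paper additionally singles out the extra transport term $(\sigma_m \cdot \grad \omega, \omega^{p-1})$ arising in the infinite-energy case $u_0 \in \Y_m$ and notes it vanishes since $\sigma_m \cdot \mathbf{n} = 0$ on $\prt \Omega_R$, a point you fold into the generic ``$\dv u = 0$'' justification; conversely, your discussion of the $\nu$-independence of the $\X$-norm part of $M(t)$ is more explicit than the paper's, which simply asserts that \refE{FMRT115E} follows from \refE{EomegaBound}.
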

\begin{proof}
\Ignore{ 
For a deterministic solution $u(t) = S(t) u_0$ with $u_0$ in $\Y$ and $\omega = \omega(u)$,
\begin{align*} 
	&\frac{d}{dt} \norm{\omega(t)}_{L^p}^p
		+ p (p - 1) \smallnorm{\abs{\omega}^{p/2 - 1} 
						\grad \omega}_{L^2(\R^2)}^2
		= p (\omega(f), \abs{\omega}^{p - 2} \omega),
\end{align*}
$p$ in $[p_0, \iny)$,
follows by taking the vorticity of \refE{NSp}, $\prt_t \omega + u \cdot \grad \omega = \nu \Delta \omega + \omega(f)$, multiplying both sides by $\abs{\omega}^{p - 2} \omega$, and integrating over space.
\Ignore{ 
Integrating \refE{DetOmega} over time gives
\begin{align}\label{e:DetOmegaIntt}
	\begin{split}
	&\norm{\omega(t)}_{L^p}^p
		+ p (p - 1) \int_0^t  \smallnorm{\abs{\omega}^{p/2 - 1} 
						\grad \omega}_{L^2(\R^2)}^2 \\
		&\qquad= \norm{\omega_0}_{L^p}^p 
			+ p \int_0^t (\omega(f), \abs{\omega}^{p - 2} \omega).
	\end{split}
\end{align}
} 
This calculation is formal, but can be made precise using an approximation and smoothing argument (the result is classical).

Applying \Holders inequality gives
\begin{align*}
	\abs{(\omega(f), \abs{\omega}^{p - 2} \omega)}
		\le \norm{\omega(f)}_{L^p} \norm{\abs{\omega}^{p - 2} \omega}_{L^{p/(p - 1)}}
		= \norm{\omega(f)}_{L^p} \norm{\omega}_{L^p}^{p - 1},
\end{align*}
from which
\begin{align*}
	p \norm{\omega(t)}_{L^p}^{p-1} \diff{}{t} \norm{\omega(t)}_{L^p}
		\le p  \norm{\omega(f)}_{L^p} \norm{\omega}_{L^p}^{p - 1}
\end{align*}
follows.
Canceling the common factor and integrating over time gives
\begin{align*} 
	\norm{\omega(t)}_{L^p}
		\le \norm{\omega_0}_{L^p} + \int_0^t \norm{\omega(f)}_{L^p}.
\end{align*}

We can write this last inequality as
} 

It is a standard result that
\begin{align}\label{e:EDetomegaBound}
	\norm{\omega(S(t) u)}_{L^p}
		\le \norm{\omega(u)}_{L^p} + \int_0^t \norm{\omega(f(t))}_{L^p}
\end{align}
for all $u$ in $\Y_0$. To prove it for $p = r/q$ in lowest terms, with $r$ even, one takes the vorticity of \refE{NSp}, $\prt_t \omega + u \cdot \grad \omega = \nu \Delta \omega + \omega(f)$, multiplies both sides by $\omega^{p - 1}$, and integrates over space and time formally to give
\begin{align*} 
	&\norm{\omega(t)}_{L^p}^p
		+ p (p - 1) \int_0^t  \smallnorm{\omega^{p/2 - 1} 
						\grad \omega}_{L^2(\R^2)}^2 
		= \norm{\omega_0}_{L^p}^p 
			+ p \int_0^t (\omega(f), \omega^{p - 1}).
\end{align*}
An approximation and smoothing argument is required to establish the equality rigorously, and it then follows for all $p$ in $[p_0, \iny]$ by the continuity of the $L^p$ norm as a function of $p$. Applying \Holders inequality gives \refE{EDetomegaBound}.

Now assume that $u = \sigma_m + v$ is in $\Y_m$. Then $\prt_t u = \prt_t v$ and $\Delta u = \Delta v$ on $\Omega_1^C$, where $\Delta \sigma_m$ vanishes. Thus, the only additional complication in the argument above is the presence of the additional term $(\sigma_m \cdot \grad \omega, \omega^{p - 1}) = (1/p) (\sigma_m, \grad \omega^p)$. But this vanishes formally
by the divergence theorem, since $\sigma_m \cdot \mathbf{n} = 0$ on $\prt \Omega_R$, hence this term need not be accounted for in the approximation and smoothness argument.

Integrating \refE{EDetomegaBound} over $\X$ gives
\begin{align*}
	\int_\X \norm{\omega(S(t) u)}_{L^p} \, d \mu_0(u)
		\le \int_\X \norm{\omega(u)}_{L^p} \, d \mu_0(u)
		+ \int_0^t \norm{\omega(f(t))}_{L^p}.
\end{align*}
(The last term has no dependence on $u$ so the integral over $\X$ disappears, $\mu_0$ being a probability measure.) But $\norm{\omega(\cdot)}_{L^p} \colon \Y \to [0, \iny)$ is a bounded continuous function on $\supp \mu_0$ so \refE{EomegaBound} follows from $\mu_t = S(t) \mu_0$, and \refE{FMRT115E} follows from \refE{EomegaBound}.
\end{proof}

\Ignore{ 
In \refE{NSomegaIdentity}, the second term is defined because $\mu_t$ for $t > 0$ is supported on a subset of $\Y$ having sufficient regularity.
} 

\Ignore{ 
From \refE{muBoundedXY}, $\mu_0$ is is $(\X, \Y)$-bounded means that
\begin{align*}
	\supp \mu_0 \subseteq \set{u \in \Y \colon \norm{u}_\X \le M},
		\text{ for some } M.
\end{align*}
} 

\begin{theorem}\label{T:FMRTForEInf}
Assume that $\mu_0$ is supported in $\Y$ with
\begin{align*}
	 \int_\X \norm{u}_\X^2 \, d \mu_0(u) < \iny,
\end{align*}
and assume that $f$ is time-independent and lies in $\Y_0$. There exists a SSE, $\mu$, as in \refD{SSE}. One such solution is $\mu_t = \ol{S}(t) \mu_0$ for all $t \ge 0$, where $\ol{S}(t)$ is the solution operator for the two-dimensional Euler equations in $\R^2$ as in \refD{SolutionOperators}. Furthermore, if the support of $\mu_0$ is bounded in $\Y$ as in \refD{BoundedSupport} and $f$ is time-independent then this solution satisfies \refE{FMRT115E} for some function $M$ continuous on $[0, \iny)$ and \refE{EomegaBound}.
\Ignore{ 
and
\begin{align}\label{e:EomegaIdentity}
	\begin{split}
	\int_\X \norm{\omega}_{L^p}^p &\, d \mu_t(u)
		= \int_\X \norm{\omega}_{L^p}^p \, d \mu_0(u) \\
			&+ p \int_0^t \int_\X (\omega(f), \abs{\omega}^{p - 2} \omega) \, d \mu_t(u) \, ds.
	\end{split}
\end{align}
} 
\end{theorem}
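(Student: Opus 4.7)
The plan is to construct the SSE as a vanishing viscosity limit of the SSNSs produced by \refT{FMRTInf} and to identify that limit directly with the push-forward family $\mu_t = \ol{S}(t)\mu_0$. First I would reduce to the case where $\mu_0$ is bounded in $\Y$: the defining properties of a SSE are all linear in $\mu$ and the push-forward under $\ol{S}(t)$ commutes with convex combinations, so the linearity argument on p.~318 of \cite{FMRT} (used in the existence half of \refT{FMRTInf}) allows us to decompose $\mu_0 = \sum_k a_k \mu_0^{(k)}$ with each $\mu_0^{(k)}$ bounded in $\Y$ and treat each summand separately.

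Assume then that $\mu_0$ is bounded in $\Y$. Since $\Y \subset \Z$, for each $\nu > 0$ \refT{FMRTInf} applies and produces a SSNS $\mu_t^\nu$ with initial data $\mu_0$, namely $\mu_t^\nu = S(t)\mu_0$ with $S(t)$ the NS solution operator at viscosity $\nu$. By \refT{FMRTForNSInf} we have $\supp \mu_t^\nu \subseteq \{u \in \Y \colon \|u\|_\Y \le M(t)\}$ for a continuous $M$ independent of $\nu$, and \refE{EomegaBound} holds uniformly in $\nu$. The deterministic input I will invoke is the vanishing viscosity limit of \cite{C1996}: for each $u_0 \in \Y$,
\begin{align*}
  S(t)u_0 \to \ol{S}(t)u_0 \text{ in } C([0,T]; L^2_{loc}(\R^2)) \text{ as } \nu \to 0,
\end{align*}
with $\|\omega(S(t)u_0)\|_{L^{p_0}\cap L^\infty}$ uniformly bounded in $\nu$. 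Via \refC{CorBSLaw}, this upgrades on $\supp \mu_s^\nu$ to a uniform $L^\infty(\Omega_{R_0})$-bound on $u$ for any fixed $R_0$.

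The central step is passing to the limit $\nu \to 0$ in property~(1) of \refD{SSNS} for $\mu_t^\nu$. Fix $\Phi \in \Test$; since each $g_j$ in the definition of $\Phi$ is compactly supported in some $\Omega_{R_0}$, all pairings depend on $u$ only through its values on $\Omega_{R_0}$. The left-hand side $\int_\X \Phi(u)\, d\mu_t^\nu(u) = \int_\X \Phi(S(t)u)\, d\mu_0(u)$ converges to $\int_\X \Phi(u)\, d\mu_t(u)$ by continuity of $\Phi$, pointwise convergence, and the dominated convergence theorem ($|\Phi|$ is bounded on the common support and $\mu_0$ is finite). Splitting $F_\nu = f - \nu A - B$, the Stokes contribution equals $\nu \int_0^t \int_\X (\grad u, \grad \Phi'(u))\, d\mu_s^\nu\, ds$ and is $O(\nu)$ since $\|\grad u\|_{L^2(\Omega_{R_0})}$ is uniformly bounded on $\supp \mu_s^\nu$. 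The nonlinear term, rewritten as $(u \otimes u, \grad \Phi'(u))$, is continuous in $u$ under $L^2_{loc}$-convergence and uniformly bounded on $\supp \mu_s^\nu$ by the uniform $L^\infty(\Omega_{R_0})$-bound on $u$; pulling the $d\mu_s^\nu$ integral back to a $d\mu_0$ integral by \refL{mutTomuzero} and applying dominated convergence yields the $\nu = 0$ analogue. Property~(2') of \refD{SSE} follows by the same scheme applied to the energy equality \refE{NSEnergyEquality} integrated against $\mu_s^\nu$, the two explicit $\nu$-factors killing the viscous terms; properties~(3)--(5) (with (5) trivial at $\nu = 0$) follow from the uniform support bound and standard measurability arguments.

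For the furthermore clause, both \refE{FMRT115E} and \refE{EomegaBound} follow by integrating their pointwise deterministic analogues for $\ol{S}(t)$ against $\mu_0$ and applying \refL{mutTomuzero}; the deterministic $L^p$-vorticity bound $\|\omega(\ol{S}(t)u_0)\|_{L^p} \le \|\omega(u_0)\|_{L^p} + \int_0^t \|\omega(f(s))\|_{L^p}\, ds$ is obtained as in the proof of \refT{FMRTForNSInf} with $\nu = 0$, the $\sigma_m \cdot \grad \omega$ term still integrating to zero against $\omega^{p-1}$ by the divergence theorem. The hard part will be the passage to the limit in the nonlinear term: the dominated convergence argument needs simultaneously a uniform $L^\infty_{loc}$-bound on $u$ on $\supp \mu_s^\nu$ (to bound $u \otimes u$) and strong $L^2_{loc}$-convergence (to pass to the limit in the pairing), with a dominating function that lies in $L^1([0,T] \times \X, ds \otimes d\mu_0)$. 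It is precisely the $\nu$-uniform vorticity estimate of \refT{FMRTForNSInf}, combined with \refC{CorBSLaw}, that supplies both ingredients and makes the argument go through.
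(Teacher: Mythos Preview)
Your proposal is correct and follows essentially the same route as the paper: reduce to $\mu_0$ bounded in $\Y$, take the SSNS $\mu_t^\nu = S(t)\mu_0$ from \refT{FMRTInf}, invoke the $\nu$-uniform vorticity bounds of \refT{FMRTForNSInf}, and pass to $\nu \to 0$ in property~(1) using the deterministic vanishing viscosity limit of \cite{C1996} together with dominated convergence after pulling back to $\mu_0$. The only organizational difference is that the paper forms the difference of the NS identity at $\mu_s^\nu$ and the target Euler identity at $\ol{\mu}_s$, so the nonlinear term appears as an integral against $d(\mu_s^\nu - \ol{\mu}_s)$ which is then decomposed explicitly via $w = S(s)u_0 - \ol{S}(s)u_0$, whereas you pass each term of the NS identity separately to its limit; these are two presentations of the same computation and use the same estimates. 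One small point: the paper uses the full $L^2(\R^2)$ bound $\|w(t)\|_H \le \rho(\nu,t)$ from \cite{C1996} rather than $L^2_{loc}$ convergence, and obtains the global $L^\infty(\R^2)$ bound on $u$, $\ol{u}$ directly (not just on $\Omega_{R_0}$); since your test functions are compactly supported this makes no difference to the argument.
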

\begin{proof}
Assume first that the support of $\mu_0$ is bounded in $\Y$.
Define $\ol{\mu}_t = \ol{S}(t) \mu_0$, and let $\mu$ be the unique SSNS for $R = \iny$ given by \refT{FMRTInf} with the same forcing and initial data as for the Euler equations.
Let $\Phi = \phi((u, g_1), \dots, (u, g_k))$ lie in $\Test$. Then $g_1, \dots, g_k$ are in $V$ and
\begin{align*}
    \Phi'(u) &= \sum_{j=1}^k
        \prt_j \phi((u, g_1), \dots, (u, g_k)) g_j \in V, \\
    \grad \Phi'(u) &= \sum_{j=1}^k
        \prt_j \phi((u, g_1), \dots, (u, g_k)) \grad g_j \in L^2,
\end{align*}
with
\begin{align}\label{e:PhipBounds}
	\norm{\Phi'(u)}_V \le C, \quad
	\norm{\grad \Phi'(u)}_{L^2} \le C
\end{align}
for some constant $C$ independent of $u$ in $\X$.

Now,
\begin{align*}
	\int_\Y \Phi(u) \, d \mu_t(u)
		= \int_\Y \Phi(u) \, d \mu_0(u)
			+ \int_0^t \int_\Y (F(s, u), \Phi'(u)) \, d \mu_s(u) \, ds
\end{align*}
so, using $F = f - \nu A u - Bu$ and $\ol{F} = f - Bu$,
\begin{align*}
	\int_\Y &\Phi(u) \, d \mu_t(u)
		- \int_\Y \Phi(u) \, d \ol{\mu}_0(u)
		- \int_0^t \int_\Y (\ol{F}(s, u), \Phi'(u)) \, d \ol{\mu}_s(u) \, ds \\
        &= \int_\Y \Phi(u) \, d (\mu_0 - \ol{\mu}_0)(u) +
           \int_0^t \int_\Y (F(s, u) - \ol{F}(s, u), \Phi'(u))
                \, d \mu_s(u) \, ds \\
        &\qquad- \int_0^t \int_\Y (Bu, \Phi'(u))
                \, d (\mu_s - \ol{\mu}_s)(u) \, ds \\
       &= \int_\Y \Phi(u) \, d (\mu_0 - \ol{\mu}_0)(u) -
           \nu \int_0^t \int_\Y (Au, \Phi'(u))
                \, d \mu_s(u) \, ds \\
           &\qquad- \int_0^t \int_\Y (Bu, \Phi'(u))
                \, d (\mu_s - \ol{\mu}_s)(u) \, ds.
\end{align*}

But $\mu_0 = \ol{\mu}_0$, so
\begin{align*} 
    \begin{split}
        \int_\Y \Phi(u) \, &d (\mu_t - \ol{\mu}_t)(u)
            = - \nu \int_0^t \int_\Y (Au, \Phi'(u))
                    \, d \mu_s(u) \, ds \\
             &\qquad - \int_0^t \int_\Y (Bu, \Phi'(u))
                    \, d (\mu_s - \ol{\mu}_s)(u) \, ds.
    \end{split}
\end{align*}

We have,
\begin{align*}
    (Bu, \Phi'(u))
       &= (u \cdot \grad u, \Phi'(u))
\end{align*}
and
\begin{align*}
    (Au, \Phi'(u))
       &= -(\Delta u, \Phi'(u)) = (\grad u, \grad \Phi'(u)),
\end{align*}
since $\Phi'(u)$ is in $V$.
Thus,
\begin{align*}
    \begin{split}
        \int_\Y \Phi(u) \, &d (\mu_t - \ol{\mu}_t)(u)
            = - \nu \int_0^t \int_\Y (\grad u, \grad \Phi'(u))
                    \, d \mu_s(u) \, ds \\
                &\qquad
               - \int_0^t \int_\Y (u \cdot \grad u, \Phi'(u))
                    \, d (\mu_s - \ol{\mu}_s)(u) \, ds.
    \end{split}
\end{align*}

We have,
\begin{align*}
	 \int_\Y (\grad u, \grad \Phi'(u)) \, d \mu_s(u)
	 	\le  C \int_\Y \norm{\grad u}_{L^2} \, d \mu_s(u)
		\le C,
\end{align*}
where we used \refE{PhipBounds} followed by \refE{EomegaBound} and the boundedness of $\mu_0$ in $\Y$. The same bound holds when integrating against $\ol{\mu}_s$. Thus,
\begin{align}\label{e:RnutBound}
    \begin{split}
        &\abs{\int_\Y \Phi(u) \, d (\mu_t - \ol{\mu}_t)(u)} \\
           &\qquad \le R \nu t
               + \abs{ \int_0^t \int_\Y (u \cdot \grad u, \Phi'(u))
                    \, d (\mu_s - \ol{\mu}_s)(u) \, ds},
    \end{split}
\end{align}
where $R$ is proportional to the right-hand side of \refE{EomegaBound}, which we note increases with time unless there is zero forcing.

For any Borel measurable function $G$ on $H$,
\begin{align*}
    \int_\Y G(u) &d (\mu_s - \ol{\mu}_s)(u)
        =  \int_\Y G(u) d \mu_s(u)
            - \int_\Y G(u) d \ol{\mu}_s(u) \\
       &= \int_\Y G(S(s) u) \, d \mu_0(u)
            - \int_\Y G(\ol{S}(s) u) \, d \ol{\mu}_0(u) \\
        &= \int_\Y (G(S(s) u_0) - G(\ol{S}(s) u_0)) \, d \mu_0(u_0) \\
       &= \int_\Y (G(u(s)) - G(\ol{u}(s))) \, d \mu_0(u_0).
\end{align*}
In the last integral, we are defining $u(t)$ and $\ol{u}(t)$ to be
$S(t) u_0$ and $\ol{S}(t) u_0$, respectively. These are the solutions to
$(NS)$ and $(E)$ given the initial velocity $u_0$. (The support of
$\mu_0$ lying in $\Y$ insures that $\ol{S}(t) u_0$ is well-defined and continuous
for $\mu_0$-almost all $u_0$.)

Thus,
\begin{align*}
    \int_\Y &(u \cdot \grad u, \Phi'(u))
                    \, d (\mu_s - \ol{\mu}_s)(u) \\
       &= \int_\Y \brac{
            (u(s) \cdot \grad u(s), \Phi'(u(s))
            - (\ol{u}(s) \cdot \grad \ol{u}(s), \Phi'(\ol{u}(s)))} \, d \mu_0.
\end{align*}

Letting
$ 
    w = u - \ol{u},
$ 
we have
\begin{align*}
	&(u \cdot \grad u, \Phi'(u)) -(\ol{u} \cdot \grad \ol{u}, \Phi'(\ol{u}))
		= (u \cdot \grad w, \Phi'(u)) \\
		&\qquad
			+ (u \cdot \grad \ol{u}, \Phi'(u) - \Phi'(\ol{u}))
			+ (w \cdot \grad \ol{u}, \Phi'(\ol{u})) \\
	  &\qquad = -(u \cdot \grad \Phi'(u), w)
			+ (u \cdot \grad \ol{u}, \Phi'(u) - \Phi'(\ol{u}))
			+ (w \cdot \grad \ol{u}, \Phi'(\ol{u})),
\end{align*}
so
\begin{align*}
	&\abs{
            (u(s) \cdot \grad u(s), \Phi'(u(s))
            - (\ol{u}(s) \cdot \grad \ol{u}(s), \Phi'(\ol{u}(s)))} \\
           &\qquad
           		\le \norm{u(s)}_{L^\iny} \norm{\grad \Phi'(u(s))}_{L^2} \norm{w(s)}_H \\
		&\qquad\qquad
					+ \norm{u(s)}_{L^\iny} \norm{\grad \ol{u}(s)}_{L^2} \norm{\Phi'(u(s)) - \Phi'(\ol{u}(s))}_H \\
		&\qquad\qquad
			+ \norm{w(s)}_H \norm{\grad \ol{u}(s)}_{L^2} \norm{\Phi'(\ol{u}(s))}_{L^\iny}.
\end{align*}

Now, \refE{EDetomegaBound} holds for solutions to ($E$): it can be derived as for ($NS$) or by viewing ($E$) as a non-homogeneous transport equation for the vorticity. Since $\supp \mu_0$ is bounded in $\Y$, it follows from \refE{EDetomegaBound} that $u$ and $\ol{u}$ are bounded in the $L^\iny([0, T] \times \R^2)$-norm uniformly over $\supp \mu_0$, as is $\grad \ol{u}$ in the $L^\iny([0, T]; L^2)$-norm. This is discussed more fully in \cite{C1996} or \cite{K2003}, where it is shown, moreover, that there exists a continuous function $\rho: [0, \iny) \times [0, \iny) \to [0, \iny)$, nondecreasing in $t$, with $\rho(0, t) = 0$ for all $t \ge 0$, such that for all $t > 0$,
\begin{align}\label{e:wrho}
    \norm{w(t)}_{H}
        \le \rho(\nu, t).
\end{align}
(For sufficiently small $\nu t$, $\rho(\nu, t) = (C \nu t)^{(1/2) e^{-Ct}}$.)

Also,
\begin{align*}
	&\norm{\Phi'(u(s)) - \Phi'(\ol{u}(s))}_H \\
		&\quad
			\le \sum_{j=1}^k \abs{\prt_j \phi((u(s), g_1), \dots, (u(s), g_k))
					- \prt_j \phi((u(s), g_1), \dots, (u(s), g_k))} \norm{g_j}_H.
\end{align*}
Now,
\begin{align*}
	\abs{(u(s), g_j) - (\ol{u}(s), g_j)}
		\le \norm{w(s)}_H \norm{g_j}_H
			\le \rho(\nu, s) \norm{g_j}_H,
\end{align*}
so since each $\prt_j \phi$ is continuous, it follows that
\begin{align*}
	\norm{\Phi'(u(s)) - \Phi'(\ol{u}(s))}_H
			\to 0 \text{ as } \nu \to 0
			\text{ uniformly over } [0, T].
\end{align*}
Combining all these facts shows that
\begin{align*}
	\int_0^t \int_\Y (u \cdot \grad u, \Phi'(u)) \, d (\mu_s - \ol{\mu}_s)(u)
			\to 0 \text{ as } \nu \to 0
\end{align*}
and hence that
\begin{align}\label{e:SSEEqL}
    \begin{split}
    \lim_{\nu \to 0} &\int_\Y \Phi(u) \, d \mu_t(u) \\
      &= \int_\Y \Phi(u) \, d \ol{\mu}_0(u)
          + \int_0^t \int_\Y (\ol{F}(s, u), \Phi'(u))
                \, d \ol{\mu}_s(u) \, ds.
    \end{split}
\end{align}
On the other hand,
\begin{align}\label{e:SSEEqR}
	\begin{split}
	\lim_{\nu \to 0} &\int_\Y \Phi(u) \, d \mu_t(u)
		= \lim_{\nu \to 0} \int_\Y \Phi(S(t) u) \, d \mu_0(u) \\
		&= \int_\Y \lim_{\nu \to 0}  \Phi(S(t) u) \, d \mu_0(u)
		= \int_\Y \Phi(\ol{S}(t) u) \, d \mu_0(u) \\
		&= \int_\Y \Phi(u) \, d \ol{\mu}_t(u).
	\end{split}
\end{align}
In the second equality we used the dominated convergence theorem. For the third equality, we used
\begin{align*}
	&\abs{\Phi(S(t) u) - \Phi(\ol{S}(t) u)} \\
		&\quad
		= \abs{\phi((S(t) u, g_1), \dots, (S(t) u, g_k))
			- \phi((\ol{S}(t) u, g_1), \dots, (\ol{S}(t) u, g_k))} \\
		&\quad
		\le \norm{\grad \phi}_{L^\iny} \abs{((S(t) u, g_1), \dots, (S(t) u, g_k))
			- ((\ol{S}(t) u, g_1), \dots, (\ol{S}(t) u, g_k))} \\
		&\quad
		\le C \abs{((S(t) u - \ol{S}(t) u, g_1), \dots, (S(t) u - \ol{S}(t) u, g_k))} \\
		&\quad
		\le C \norm{S(t) u - \ol{S}(t) u}_H
		\le C \rho(\nu, t) \to 0 \text{ as } \nu \to 0,
\end{align*}
the last inequality just being another way of writing \refE{wrho}. Hence, the right-hand sides of \refEAnd{SSEEqL}{SSEEqR} are equal, establishing the first property in \refD{SSE}.

\refEAnd{FMRT115E}{EomegaBound} follow as in the proof of \refT{FMRTForNSInf}.

As in the proof of \refT{FMRTInf}, we can drop the restriction that the support of $\mu_0$ is bounded in $\Y$ by exploiting the inherent linearity in the definition of a SSE, as done on p. 318 of \cite{FMRT}. The remaining properties in \refD{SSE} follow using the dominated convergence theorem in a manner similar to what we did above.  
\end{proof}

\Ignore{ 
We parallel the existence proof for an infinite-energy SSNS in \refS{ConstructNS}, but now using estimates obtained from the expanding domain limit in Theorem 8.1 of \cite{K2005ExpandingDomain} adapted to infinite-energy solutions. The role of the projection operator $\TR$ in \refS{ConstructNS} is now played by the operator $\YR$ of \refD{YR}.
Formally, \refEAnd{EomegaIdentity}{EomegaBound} follow by taking the vorticity of \refE{Ep}, $\prt_t \omega + u \cdot \grad \omega = \omega(f)$, multiplying both sides by $\abs{\omega}^{p - 2} \omega$, then integrating over space. Integrating the resulting identity over time gives \refE{EomegaIdentity}; applying \Holders inequality to the identity before integrating over time gives \refE{EomegaBound}. It is not hard to show that these formal derivations hold rigorously.
} 

The proof of \refT{FMRTForEInf} shows that
\begin{align*}
	\int_\Y \Phi(u) \, d \mu_t(u)
		\to \int_\Y  \Phi(u) \, d\ol{\mu}_t(u)
			\text{ as } \nu \to 0.
\end{align*}
Since the space $\Test$ of test functions is dense in the space of all bounded continuous functions on $\X$, it follows that $\mu \to \ol{\mu}$ as measures as $\nu \to 0$; that is, the vanishing viscosity limit holds for statistical solutions to the Navier-Stokes and Euler equations.

\Ignore{ 

%
%
\section{The vanishing viscosity limit of statistical solutions}\label{S:VV}

\noindent 

\begin{theorem}
Let $\mu_0$ be bounded in $\Y$ with $\mu$ the unique SSNS for $R = \iny$ with initial velocity $\mu_0$. Let $\ol{\mu}_t = \ol{S}(t) \mu_0$ be the SSE in the whole plane given by \refT{FMRTForEInf}. For simplicity, assume the same forcing for ($NS$) and ($E$). Then $\mu \to \ol{\mu}$ as measures as $\nu \to 0$.
\end{theorem}
\begin{proof}
Let $\Phi = \phi((u, g_1), \dots, (u, g_k))$ lie in $\Test$, so $g_1, \dots, g_k$ are in $V$ and
\begin{align*}
    \Phi'(u) &= \sum_{j=1}^k
        \prt_j \phi((u, g_1), \dots, (u, g_k)) g_j \in V \\
    \grad \Phi'(u) &= \sum_{j=1}^k
        \prt_j \phi((u, g_1), \dots, (u, g_k)) \grad g_j \in H,
\end{align*}
with
\begin{align}\label{e:PhipBounds}
	\norm{\Phi'(u)}_V \le C, \quad
	\norm{\grad \Phi'(u)}_H \le C
\end{align}
for some constant $C$ independent of $u$ in $\Y$.

Then,
\begin{align*}
    \int_\Y \Phi(u) \, d \mu_t(u)
       &= \int_\Y \Phi(u) \, d \mu_0(u)
          + \int_0^t \int_\Y (F(s, u), \Phi'(u))
                \, d \mu_s(u) \, ds, \\
    \int_\Y \Phi(u) \, d \ol{\mu}_t(u)
       &= \int_\Y \Phi(u) \, d \ol{\mu}_0(u)
          + \int_0^t \int_\Y (\ol{F}(s, u), \Phi'(u))
                \, d \ol{\mu}_s(u) \, ds,
\end{align*}
where $F = f - \nu A u - Bu$ and $\ol{F} = f - Bu$. Subtracting,
\begin{align*}
    \int_\Y &\Phi(u) \, d (\mu_t - \ol{\mu}_t)(u) \\
        &= \int_\Y \Phi(u) \, d (\mu_0 - \ol{\mu}_0)(u) +
           \int_0^t \int_\Y (F(s, u) - \ol{F}(s, u), \Phi'(u))
                \, d \mu_s(u) \, ds \\
        &\qquad- \int_0^t \int_\Y (Bu, \Phi'(u))
                \, d (\mu_s - \ol{\mu}_s)(u) \, ds \\
       &= \int_\Y \Phi(u) \, d (\mu_0 - \ol{\mu}_0)(u) -
           \nu \int_0^t \int_\Y (Au, \Phi'(u))
                \, d \mu_s(u) \, ds \\
           &\qquad- \int_0^t \int_\Y (Bu, \Phi'(u))
                \, d (\mu_s - \ol{\mu}_s)(u) \, ds.
\end{align*}

But $\mu_0 = \ol{\mu}_0$, so
\begin{align*} 
    \begin{split}
        \int_\Y \Phi(u) \, &d (\mu_t - \ol{\mu}_t)(u)
            = - \nu \int_0^t \int_\Y (Au, \Phi'(u))
                    \, d \mu_s(u) \, ds \\
             &\qquad - \int_0^t \int_\Y (Bu, \Phi'(u))
                    \, d (\mu_s - \ol{\mu}_s)(u) \, ds.
    \end{split}
\end{align*}

We have,
\begin{align*}
    (Bu, \Phi'(u))
       &= (u \cdot \grad u, \Phi'(u))
\end{align*}
and
\begin{align*}
    (Au, \Phi'(u))
       &= -(\Delta u, \Phi'(u)) = (\grad u, \grad \Phi'(u)),
\end{align*}
since $\Phi'(u)$ is in $V$.
Thus,
\begin{align*}
    \begin{split}
        \int_\Y \Phi(u) \, &d (\mu_t - \ol{\mu}_t)(u)
            = - \nu \int_0^t \int_\Y (\grad u, \grad \Phi'(u))
                    \, d \mu_s(u) \, ds \\
                &\qquad
               - \int_0^t \int_\Y (u \cdot \grad u, \Phi'(u))
                    \, d (\mu_s - \ol{\mu}_s)(u) \, ds.
    \end{split}
\end{align*}

We have,
\begin{align*}
	 \int_\Y (\grad u, \grad \Phi'(u)) \, d \mu_s(u)
	 	\le  C \int_\Y \norm{\grad u}_{L^2} \, d \mu_s(u)
		\le C,
\end{align*}
where we used \refE{PhipBounds} followed by \refE{EomegaBound} and the boundedness of $\mu_0$ in $\Y$. The same bound holds when integrating against $\ol{\mu}_s$. Thus,
\begin{align}\label{e:RnutBound}
    \begin{split}
        &\abs{\int_\Y \Phi(u) \, d (\mu_t - \ol{\mu}_t)(u)} \\
           &\qquad \le R \nu t
               + \abs{ \int_0^t \int_\Y (u \cdot \grad u, \Phi'(u))
                    \, d (\mu_s - \ol{\mu}_s)(u) \, ds},
    \end{split}
\end{align}
where $R$ is double the right-hand side of \refE{EomegaBound}, which we note increases with time unless there is zero forcing.

To make any further progress, we need to introduce the solution
operators, which makes things specific to two dimensions. So let
$(S(t))_{t \ge 0}$ and $(\ol{S}(t))_{t \ge 0}$ be the solution
operators for $(NS)$ and $(E)$, respectively. Then for any measurable
function $G$ on $H$,
\begin{align*}
    \int_\Y G(u) &d (\mu_s - \ol{\mu}_s)(u)
        =  \int_\Y G(u) d \mu_s(u)
            - \int_\Y G(u) d \ol{\mu}_s(u) \\
       &= \int_\Y G(S(s) u) \, d \mu_0(u)
            - \int_\Y G(\ol{S}(s) u) \, d \ol{\mu}_0(u) \\
        &= \int_\Y (G(S(s) u_0) - G(\ol{S}(s) u_0)) \, d \mu_0(u_0) \\
       &= \int_\Y (G(u(s)) - G(\ol{u}(s))) \, d \mu_0(u_0).
\end{align*}
In the last integral, we are defining $u(t)$ and $\ol{u}(t)$ to be
$S(t) u_0$ and $\ol{S}(t) u_0$, respectively. These are the solutions to
$(NS)$ and $(E)$ given the initial velocity $u_0$. (The support of
$\mu_0$ lying in $\Y$ insures that $\ol{S}(t) u_0$ is well-defined and continuous
for $\mu_0$-almost all $u_0$.)

Thus,
\begin{align*}
    \int_\Y &(u \cdot \grad u, \Phi'(u))
                    \, d (\mu_s - \ol{\mu}_s)(u) \\
       &= \int_\Y \brac{
            (u(s) \cdot \grad u(s), \Phi'(u(s))
            - (\ol{u}(s) \cdot \grad \ol{u}(s), \Phi'(\ol{u}(s)))} \, d \mu_0.
\end{align*}

Letting
\begin{align*}
    w(t) = u - \ol{u},
\end{align*}
we have
\begin{align*}
	&(u \cdot \grad u, \Phi'(u)) -(\ol{u} \cdot \grad \ol{u}, \Phi'(\ol{u}))
		= (u \cdot \grad w, \Phi'(u)) \\
		&\qquad
			+ (u \cdot \grad \ol{u}, \Phi'(u) - \Phi'(\ol{u}))
			+ (w \cdot \grad \ol{u}, \Phi'(\ol{u})) \\
	  &\qquad = -(u \cdot \grad \Phi'(u), w)
			+ (u \cdot \grad \ol{u}, \Phi'(u) - \Phi'(\ol{u}))
			+ (w \cdot \grad \ol{u}, \Phi'(\ol{u})),
\end{align*}
so
\begin{align*}
	&\abs{
            (u(s) \cdot \grad u(s), \Phi'(u(s))
            - (\ol{u}(s) \cdot \grad \ol{u}(s), \Phi'(\ol{u}(s)))} \\
           &\qquad
           		\le \norm{u(s)}_{L^\iny} \norm{\grad \Phi'(u(s))}_H \norm{w(s)}_H \\
		&\qquad\qquad
					+ \norm{u(s)}_{L^\iny} \norm{\grad \ol{u}(s)}_{L^2} \norm{\Phi'(u(s)) - \Phi'(\ol{u}(s))}_H \\
		&\qquad\qquad
			+ \norm{w(s)}_H \norm{\grad \ol{u}(s)}_{L^2} \norm{\Phi'(\ol{u}(s))}_{L^\iny}.
\end{align*}

Since $\supp \mu_0$ is bounded in $\Y$, it follows from \cite{C1996} or \cite{K2003} that $u$ and $\ol{u}$ are bounded in the $L^\iny([0, T] \times \R^2)$-norm uniformly over $\supp \mu_0$, as is $\grad \ol{u}$ in the $L^\iny([0, T]; L^2)$-norm (these facts also follow from \refE{EomegaBound}) and that for all $t > 0$,
\begin{align}\label{e:wrho}
    \norm{w(t)}_{H}
        \le \rho(\nu, t),
\end{align}
for some continuous function $\rho: [0, \iny) \times [0, \iny) \to [0, \iny)$,
nondecreasing in $t$, with $\rho(0, t) = 0$ for all $t \ge 0$.

Also,
\begin{align*}
	&\norm{\Phi'(u(s)) - \Phi'(\ol{u}(s))}_H \\
		&\quad
			\le \sum_{j=1}^k \abs{\prt_j \phi((u(s), g_1), \dots, (u(s), g_k))
					- \prt_j \phi((u(s), g_1), \dots, (u(s), g_k))} \norm{g_j}_H.
\end{align*}
Now,
\begin{align*}
	\abs{(u(s), g_j) - (\ol{u}(s), g_j)}
		\le \norm{w(s)}_H \norm{g_j}_H
			\le \rho(\nu, s) \norm{g_j}_H,
\end{align*}
so since each $\prt_j \phi$ is continuous, it follows that
\begin{align*}
	\norm{\Phi'(u(s)) - \Phi'(\ol{u}(s))}_H
			\to 0 \text{ as } \nu \to 0
			\text{ uniformly over } [0, T].
\end{align*}
Combining all these facts shows that
\begin{align*}
	\int_0^t \int_\Y (u \cdot \grad u, \Phi'(u)) \, d (\mu_s - \ol{\mu}_s)(u)
			\to 0 \text{ as } \nu \to 0
\end{align*}
and hence by \refE{RnutBound} that
\begin{align*}
	\int_\Y \Phi(u) \, d \mu_t(u)
		\to \int_\Y  \Phi(u) \, d\ol{\mu}_t(u)
			\to 0 \text{ as } \nu \to 0.
\end{align*}
Since the space $\Test$ of test functions is dense in the space of all bounded continuous functions on $\X$, it follows that $\mu \to \ol{\mu}$ as measures as $\nu \to 0$.
\end{proof}

From the proof, we could write down explicitly the rate of convergence of $\int_\Y \Phi(u) \, d \mu_t(u)$ to $ \int_\Y \Phi(u) \, d \ol{\mu}_t(u)$, the rate depending on the essential supremum of $\norm{u}_\Y$ over $\supp \mu_0$, $\norm{\Phi}_{C^1}$, and the modulus of continuity of $\Phi'$.

The vanishing viscosity argument above can be adapted to prove the existence result in \refT{FMRTForEInf} as follows:

\begin{proof}[\textbf{Alternate proof of \refT{FMRTForEInf}}]
Assume first that the support of $\mu_0$ is bounded in $\Y$ and $f$ is time-independent.
From what we showed above, if we let $\mu_t = \ol{S}(t) \mu_0$ then
\begin{align*}
    \lim_{\nu \to 0} &\int_\Y \Phi(u) \, d \mu_t(u) \\
      &= \int_\Y \Phi(u) \, d \ol{\mu}_0(u)
          + \int_0^t \int_\Y (\ol{F}(s, u), \Phi'(u))
                \, d \ol{\mu}_s(u) \, ds.
\end{align*}
On the other hand,
\begin{align*}
	\lim_{\nu \to 0} &\int_\Y \Phi(u) \, d \mu_t(u)
		= \lim_{\nu \to 0} \int_\Y \Phi(S(t) u) \, d \mu_0(u) \\
		&= \int_\Y \lim_{\nu \to 0}  \Phi(S(t) u) \, d \mu_0(u)
		= \int_\Y \Phi(\ol{S}(t) u) \, d \mu_0(u) \\
		&= \int_\Y \Phi(u) \, d \ol{\mu}_t(u).
\end{align*}
In the second equality we used the dominated convergence theorem. For the fourth inequality, we used
\begin{align*}
	&\abs{\Phi(S(t) u) - \Phi(\ol{S}(t) u} \\
		&\quad
		= \abs{\phi((S(t) u, g_1), \dots, (S(t) u, g_k))
			- \phi((\ol{S}(t) u, g_1), \dots, (\ol{S}(t) u, g_k))} \\
		&\quad
		\le \norm{\phi'}_{L^\iny} \abs{((S(t) u, g_1), \dots, (S(t) u, g_k))
			- ((\ol{S}(t) u, g_1), \dots, (\ol{S}(t) u, g_k))} \\
		&\quad
		\le C \abs{((S(t) u - \ol{S}(t) u, g_1), \dots, (S(t) u - \ol{S}(t) u, g_k))} \\
		&\quad
		\le C \norm{S(t) u - \ol{S}(t) u}_H
		\le C \rho(\nu, t) \to 0 \text{ as } \nu \to 0,
\end{align*}
the last inequality just being another way of writing \refE{wrho}.

This gives the first property in \refD{SSE}. As in the proof of \refT{FMRTInf}, we can drop the restriction that the support of $\mu_0$ is bounded in $\Y$ by exploiting the inherent linearity in the definition of a SSNS, as done on p. 318 of \cite{FMRT}. The remaining properties in \refD{SSE} follow using the dominated convergence theorem in a manner similar to what we did above, as does \refE{EomegaIdentity} (and hence \refE{EomegaBound}).  
\end{proof}
} 

%
%
\ifthenelse{\value{bIncludeConstantinRamosSection}=0}
{}
{
\section{Damped and driven fluid equations}\label{S:DD}

\noindent In \cite{CR2007}, Constantin and Ramos consider stationary statistical solutions to the damped and driven Navier-Stokes equations (SSSNS) and their inviscid limit to a stationary statistical solution to the damped and driven Euler equations (SSSE). We refer the reader to \cite{CR2007} for definitions of such solutions. We restrict ourselves to the following brief observations.

The deterministic form of the time-dependent damped and driven Navier-Stokes equations (Equation (1) of \cite{CR2007}) is
\begin{align}\label{e:DDNSu}
	\prt_t u + u \cdot \grad u - \nu \Delta u + \gamma u + \grad p = f,
		\quad \dv u = 0.
\end{align}
Taking the vorticity of this equation gives Equation (4) of \cite{CR2007}:
\begin{align}\label{e:DDNSomega}
	\prt_t \omega + u \cdot \grad \omega - \nu \Delta \omega + \gamma \omega = g.
\end{align}
Here, $g$ is the vorticity of the vector-valued forcing term, $f$.

In constructing a SSSNS from long time-averages of a solution to \refE{DDNSu}, Constantin and Ramos assume (in Section 5 of \cite{CR2007}) that $f$ belongs to $W^{1, 1} \cap W^{1, \iny}$, which means that $g$ belongs to $L^1 \cap L^\iny$ with
\begin{align*}
	\int_{\R^2} g = 0.
\end{align*}
This last assumption is not stated explicitly, but follows from their assumption that $g = \omega(f)$ is in $L^1$ with $f$ in $L^2$ (see, for instance, \cite{C1998} p. 12). Similarly, $u_0$ is assumed to lie in $L^2$ with $\omega_0 = \omega(u_0)$ in $L^1 \cap L^\iny$, from which it follows that
\begin{align*}
	\int_{\R^2} \omega_0 = 0.
\end{align*}

At this point we do not mean \refE{DDNSomega} to be the vorticity formulation of \refE{DDNSu}, since we can, a priori, only recover a velocity field in $L^\iny$ via the Biot-Savart law (see \refC{CorBSLaw}). The vorticity formulation is not made in \cite{CR2007} until the SSSNS is introduced, and in that setting we do not need the additional regularity of $u$.

Integrating \refE{DDNSomega} over all of $\R^2$ and letting $m(t) = \int_{\R^2} \omega(t)$ and $\eta = \int_{\R^2} g$, it follows formally that
\begin{align*}
	m'(t) + \gamma m(t)
			= \eta
\end{align*}
(this formal calculation can be made rigorous). Thus,
\begin{align}\label{e:m}
	m(t)
		= \frac{\eta}{\gamma} + \pr{m(0) - \frac{\eta}{\gamma}} e^{-\gamma t}.
\end{align}
But $m(0) = \eta = 0$, so $m(t)$ is zero for all time. (Of course, this conclusion follows more immediately from $u(t)$ lying in $L^2$ with $\omega(t)$ in $L^1 \cap L^\iny$, but we will extend it to nonzero values of $m(t)$ in a moment.)

Now, if we wish to extend the construction in Section 5 of \cite{CR2007} to cover initial velocities with infinite energy, and so be able to consider vortex patch initial data, \refE{m} will still hold if we assume that $u_0$ lies in $\Y$ of \refE{Y} (with $p_0 = 1$). We may, however, have $m(0) \ne 0$ while $\eta = 0$, so $m(t) = m(0) e^{- \gamma t}$. It follows that for any long time-average, using the notation of \cite{CR2007},
\begin{align*}
	\Lim_{t \to \iny} &\frac{1}{t} \int_0^t
		m(S^{NS, \gamma} (s + t_0) \omega_0) \, ds \\
		&= \lim_{t \to \iny} \frac{1}{t} \int_0^t
			m(S^{NS, \gamma} (s + t_0) \omega_0) \, ds
		= 0.
\end{align*}
Thus, the stationary solution that we recover is still finite-energy.

The change that makes the constructed SSSNS infinite-energy is simple: require that $\eta = \gamma m(0)$. For then by \refE{m}, $m(t) = m(0)$ for all time and hence also in the long time-average. This fact merely reflects the observation that integrating the vorticity for a stationary deterministic solution (Equation (6) of \cite{CR2007}) over all of $\R^2$ requires that $\gamma \int_{\R^2} \omega = \int_{\R^2} g$. A limitation, though, of such a construction is that the solution is supported on a subset of $\Y_m$ for one fixed $m$, and hence exhibits no variation in how ``infinite'' its energy is. A possible remedy to this is to allow time-varying forcing, making $\eta$ vary over time in such a way as to make the stationary solution spread across different $\Y_m$ spaces.

\begin{remark}\label{R:InfiniteFiniteEnergyCR}
Of course, it is a little misleading to even say that the SSSNSs constructed in Theorem 5.1 of \cite{CR2007} are finite-energy. More precisely, what is true is that the total mass of the vorticity is zero $\mu^\nu$-a.e., because
\begin{align*}
	\int_{L^2(\R^2)}
		\pr{\int_{\R^2} \omega(x) \, dx}^2 \, d \mu^\nu(\omega)
			= 0.
\end{align*}
This follows from a limiting argument using the test functions $\Psi^k(\omega) = \psi((\omega, w^k))$, where $\psi$ in $C^\iny(\R)$ is chosen so that $\psi(x) = x^2$ for $\abs{x} \le 1$ and each $w_k$ is a smoothed compactly supported version of the characteristic function of the ball with radius $k$ centered at the origin.
\end{remark}

\Ignore{ 
\begin{cor}\label{L:omegaMap}
The vorticity operator $\omega$ maps $\Y$ (with $p_0 = 1$) injectively into $L^1 \cap L^\iny(\R^2)$. On $\omega(\Y)$, $K* = \omega^{-1}$; that is, \refE{BSLaw} applied to a vorticity in $\omega(\Y)$ returns the unique vector in $\Y$ having the given vorticity.
\end{cor}
\begin{proof}
That $\omega$ maps $\Y$ into $L^1 \cap L^\iny(\R^2)$ is clear. To prove injectivity, suppose that $u_1 = \sigma_m + v_1$ lies in $E_m \cap \dot{W}^{1, 1} \cap \dot{W}^{1, \iny}(\R^2)$ and $u_2 = \sigma_{m'} + v_2$ lies in $E_{m'} \cap \dot{W}^{1, 1} \cap \dot{W}^{1, \iny}(\R^2)$ with $\omega(u_1) = \omega(u_2) = \omega$. Then $u_1 - u_2 = \sigma_{m - m'} + (v_1 - v_2)$ satisfies  $\omega(u_1 - u_2) = 0$. It follows form Lemma 1.3.1 p. 12 of \cite{C1998} that $u_1 - u_2 = 0$. That $K* = \omega^{-1}$ then follows from \refL{BSLaw}.
\end{proof}
} 

} 

%
%
\section*{Acknowledgements}

The author was supported in part by NSF grant DMS-0705586 during the period of this work.




\begin{thebibliography}{10}

\bibitem{BenArtzi1994}
Matania Ben-Artzi.
\newblock Global solutions of two-dimensional {N}avier-{S}tokes and {E}uler
  equations.
\newblock {\em Arch. Rational Mech. Anal.}, 128(4):329--358, 1994.

\bibitem{C1996}
Jean-Yves Chemin.
\newblock A remark on the inviscid limit for two-dimensional incompressible
  fluids.
\newblock {\em Comm. Partial Differential Equations}, 21(11-12):1771--1779,
  1996.

\bibitem{C1998}
Jean-Yves Chemin.
\newblock {\em Perfect incompressible fluids}, volume~14 of {\em Oxford Lecture
  Series in Mathematics and its Applications}.
\newblock The Clarendon Press Oxford University Press, New York, 1998.
\newblock Translated from the 1995 French original by Isabelle Gallagher and
  Dragos Iftimie.

\bibitem{CR2007}
P.~Constantin and F.~Ramos.
\newblock Inviscid limit for damped and driven incompressible {N}avier-{S}tokes
  equations in {$\Bbb R\sp 2$}.
\newblock {\em Comm. Math. Phys.}, 275(2):529--551, 2007.

\bibitem{Foias1972}
C.~Foia{\c{s}}.
\newblock Statistical study of {N}avier-{S}tokes equations. {I}, {II}.
\newblock {\em Rend. Sem. Mat. Univ. Padova}, 48:219--348 (1973); ibid. 49
  (1973), 9--123, 1972.

\bibitem{FMRT}
C.~Foias, O.~Manley, R.~Rosa, and R.~Temam.
\newblock {\em Navier-{S}tokes equations and turbulence}, volume~83 of {\em
  Encyclopedia of Mathematics and its Applications}.
\newblock Cambridge University Press, Cambridge, 2001.

\bibitem{IK2007}
Drago{\c{s}} Iftimie and James~P. Kelliher.
\newblock Remarks on the vanishing obstacle limit for a 3d viscous
  incompressible fluid.
\newblock {\em Accepted, {P}roceedings of the {A}merican {M}athematical
  {S}ociety}.

\bibitem{K2003}
James~P. Kelliher.
\newblock The inviscid limit for two-dimensional incompressible fluids with
  unbounded vorticity.
\newblock {\em Math. Res. Lett.}, 11(4):519--528, 2004.

\bibitem{K2005ExpandingDomain}
James~P. Kelliher.
\newblock Expanding domain limit for incompressible fluids in the plane.
\newblock {\em Comm. Math. Phys.}, 278(3):753--773, 2008.

\bibitem{KLL2007}
James~P. Kelliher, M.~C. Lopes~Filho, and H.~J. Nussenzveig~Lopes.
\newblock Vanishing viscosity limit for an expanding domain in space.
\newblock {\em Submitted}.

\bibitem{T2001}
Roger Temam.
\newblock {\em Navier-{S}tokes equations}.
\newblock AMS Chelsea Publishing, Providence, RI, 2001.
\newblock Theory and numerical analysis, Reprint of the 1984 edition.

\bibitem{Y1963}
V.~I. Yudovich.
\newblock Non-stationary flows of an ideal incompressible fluid.
\newblock {\em \u Z. Vy\v cisl. Mat. i Mat. Fiz.}, 3:1032--1066 (Russian),
  1963.

\bibitem{Y1995}
V.~I. Yudovich.
\newblock Uniqueness theorem for the basic nonstationary problem in the
  dynamics of an ideal incompressible fluid.
\newblock {\em Math. Res. Lett.}, 2(1):27--38, 1995.

\end{thebibliography}

\end{document}